\newtheorem{thm}{Theorem}[section]
\newtheorem{lemma}[thm]{Lemma}
\newtheorem*{lemma*}{Lemma}
\newtheorem{prop}[thm]{Proposition}
\newtheorem{cor}[thm]{Corollary}
\newtheorem*{cor*}{Corollary}
\theoremstyle{definition}
\newtheorem{ex}[thm]{Example}
\newtheorem*{ex*}{Example}
\newtheorem*{exs*}{Example}
\newtheorem{defn}[thm]{Definition}
\newtheorem*{defn*}{Definition}
\newtheorem*{defns*}{Definition}
\newtheorem{question}[thm]{Question}
\newtheorem{notation}[thm]{Notation}
\newtheorem*{notation*}{Notation}
\newtheorem{rem}[thm]{Remark}
\newtheorem*{rem*}{Remark}
\numberwithin{equation}{section}
\renewcommand{\le}{\leqslant}
\renewcommand{\ge}{\geqslant}
\def\emptyset{\varnothing}
\def\emph{}
\DeclareTextFontCommand{\bfemph}{\bf}
\DeclareTextFontCommand{\itemph}{\it}
\def\emph{\bfemph}
\def\blankfootnote{\xdef\@thefnmark{}\@footnotetext}
\newcommand*{\textlabel}[2]{%
  \edef\@currentlabel{#1}
  \phantomsection
  #1\label{#2}
}
\newcommand{\idx}[1]{\lvert#1\rvert}
\newcommand{\divides}[2]{\ensuremath{ {#1} \mid {#2} }}
\newcommand{\ndivides}[2]{\ensuremath{ {#1} \nmid {#2} }}
\newcommand{\onto}{\twoheadrightarrow}
\newcommand{\into}{\rightarrowtail}
\newcommand{\incl}{\hookrightarrow}
\newcommand{\FF}{\mathbf{F}}
\newcommand{\GG}{\ensuremath{\mathbf{G}}}
\newcommand{\G}{\ensuremath{\mathsf{G}}}
\newcommand{\HH}{\ensuremath{\mathbf{H}}}
\let\H\undefined
\newcommand{\H}{\ensuremath{\mathsf{H}}}
\newcommand{\QQ}{\mathbf{Q}}
\newcommand{\NN}{\mathbf{N}}
\newcommand{\ZZ}{\mathbf{Z}}
\newcommand{\CC}{\mathbf{C}}
\newcommand{\MM}{\mathbf M}
\newcommand{\RR}{\mathbf{R}}
\newcommand{\fg}{\ensuremath{\mathfrak g}}
\newcommand{\Places}{\ensuremath{\mathcal V}}
\newcommand{\RF}{\ensuremath{\mathfrak K}}
\newcommand{\XX}{\ensuremath{\bm X}}
\newcommand{\YY}{\ensuremath{\bm Y}}
\newcommand{\Zeta}{\ensuremath{\mathsf{Z}}}
\newcommand{\ee}{\ensuremath{\bm e}}
\newcommand{\ess}{\ensuremath{\bm s}}
\newcommand{\uu}{\ensuremath{\bm u}}
\newcommand{\xx}{\ensuremath{\bm x}}
\newcommand{\yy}{\ensuremath{\bm y}}
\newcommand{\fp}{\mathfrak{p}}
\newcommand{\fP}{\mathfrak{P}}
\newcommand{\fo}{\mathfrak{o}}
\newcommand{\fO}{\mathfrak{O}}
\newcommand{\cC}{\mathcal{C}}
\newcommand{\cD}{\mathcal{D}}
\newcommand{\cH}{\mathcal{H}}
\newcommand{\cN}{\mathcal{N}}
\newcommand{\cO}{\mathcal{O}}
\newcommand{\cP}{\mathcal{P}}
\newcommand{\cQ}{\mathcal{Q}}
\newcommand{\cR}{\mathcal{R}}
\newcommand{\cS}{\mathcal{S}}
\newcommand{\cT}{\mathcal{T}}
\newcommand{\cZ}{\mathcal{Z}}
\DeclareMathOperator{\GL}{GL}
\DeclareMathOperator{\Mat}{M}
\DeclareMathOperator{\Spec}{Spec}
\DeclareMathOperator{\topo}{top}
\newcommand{\Torus}{\mathbf{T}}
\newcommand{\Euler}{\ensuremath{\chi}}
\newcommand{\Orth}{\RR_{\ge 0}}
\newcommand{\StrictOrth}{\RR_{> 0}}
\DeclareMathOperator{\dd}{d\!}
\newcommand{\dtimes}{\ensuremath{\,\cdotp}}
\newcommand{\blank}{\ensuremath{{-}}}
\newcommand{\card}[1]{\lvert#1\rvert}
\newcommand{\noof}[1]{\#{#1}}
\newcommand{\bil}[2]{\ensuremath{\langle {#1}, {#2} \rangle }}
\DeclarePairedDelimiter{\abs}{\lvert}{\rvert}
\DeclarePairedDelimiter{\norm}{\lVert}{\rVert}
\DeclarePairedDelimiter{\red}{\lfloor}{\rfloor}
\DeclareMathOperator{\supp}{supp}
\DeclareMathOperator{\init}{in}
\DeclareMathOperator{\Newton}{New}
\DeclareMathOperator{\NormalCone}{N}
\DeclareMathOperator{\Real}{Re}
\DeclareMathOperator{\conv}{conv}
\DeclareMathOperator{\Irr}{Irr}
\DeclareMathOperator{\irr}{irr}
\newcommand{\llb}{\ensuremath{[\![ }}
\newcommand{\llp}{\ensuremath{(\!( }}
\newcommand{\rrb}{\ensuremath{]\!] }}
\newcommand{\rrp}{\ensuremath{)\!) }}
\title{Topological representation zeta functions of unipotent groups}
\author{Tobias Rossmann}
\affil{\small Fakult\"at f\"ur Mathematik, Universit\"at Bielefeld, D-33501
  Bielefeld, Germany}
\date{March 2015}
\begin{document}

\maketitle
\thispagestyle{empty}

\begin{abstract}
  \small
  Inspired by work surrounding Igusa's local zeta function,
  we introduce topological representation zeta functions of unipotent algebraic
  groups over number fields.
  These group-theoretic invariants capture common features of
  established $p$-adic representation zeta functions associated with pro-$p$
  groups derived from unipotent groups.
  We investigate fundamental properties of the topological
  zeta functions considered here.
  We also develop a method for computing them under non-degeneracy assumptions.
  As an application, among other things, we obtain a complete
  classification of  topological representation zeta functions of unipotent
  algebraic groups of dimension at most six.
\end{abstract}

\blankfootnote{\indent{\itshape 2010 Mathematics Subject Classification.}
  11M41, 
  20F69, 
  20G30, 
  20F18, 
  20C15, 
  14M25. 
  

  This work is supported by the DFG Priority Programme
  ``Algorithmic and Experimental Methods in Algebra, Geometry and Number
  Theory'' (SPP 1489).}

\section{Introduction}
\label{s:intro}

\paragraph{Enumerating representations.}
Representation zeta functions of groups are Dirichlet series enumerating
irreducible finite-dimensional complex representations up to
suitable notions of equivalence. 
For instance, given a group $G$, let $r_n(G)$ be the possibly
infinite number of irreducible representations $G \to
\GL_n(\CC)$ counted up to equivalence of representations in the usual sense;
if $G$ is a topological group, then we only consider continuous representations.
For various interesting classes of groups, it turns out that the numbers
$r_n(G)$ are polynomially bounded as a function of $n$. 
For such groups, we may then consider the representation zeta function $\sum_{n=1}^\infty
r_n(G)n^{-s}$, where $s$ is a complex variable.
A part of the theory of representation growth, the study of such zeta
functions is an active area, see \cites{Klo13,Vol14} for surveys.
Among the classes of groups of major interest are
arithmetic groups associated with semisimple algebraic groups \cite{LM04,LL08},
compact $p$-adic analytic groups \cite{JZ06, AKOV13}, as well as nilpotent and
unipotent groups \cite{HMRC14,SV14}.

\paragraph{Representation zeta functions of nilpotent and unipotent groups:\! twisting.}
For a finitely generated infinite nilpotent group $G$,
the number~$r_1(G)$ is infinite.
Hrushovski and Martin \cite{HMRC14} (first version, 2006)
studied ``twist-isoclasses'' of representations.
Here, two representations $\varrho_1,\varrho_2\colon G\to \GL_n(\CC)$ of
an arbitrary group $G$ are \emph{twist-equivalent} if $\varrho_1$ is equivalent
to $\alpha \otimes_{\CC}  \varrho_2$ in the usual sense for a 
$1$-dimensional representation $\alpha\colon G\to \CC^\times$;
for a topological group $G$, we require $\varrho_1$, $\varrho_2$, and $\alpha$
to be continuous.
Twist-equivalence classes of representations are called \emph{twist-isoclasses}.
Let $\tilde r_n(G)$ denote the number of twist-isoclasses of (continuous)
irreducible representations $G \to \GL_n(\CC)$. 
If $G$ is finitely generated nilpotent or the pro-$p$ completion
of such a group, then the $\tilde r_n(G)$ are
polynomially bounded and
the \emph{representation zeta function} of $G$ is 
\[
\zeta_G(s) := \sum_{n=1}^\infty \tilde r_n(G) n^{-s}.
\]
Such zeta functions were studied in~\cite[\S 3.4]{Vol10} and \cite[\S 8]{HMRC14}.
Denoting the pro-$p$ completion of a finitely generated nilpotent group $G$ by
$\hat G_p$, a central theme in both cited sources is the behaviour
of $\zeta_{\hat G_p}(s)$ under variation of $p$.
Extending \cite[\S 3.4]{Vol10},
Stasinski and Voll~\cite{SV14} studied representation zeta functions enumerating
twist-isoclasses of groups arising from unipotent group schemes.
This is essentially the point of view taken here.

\paragraph{Previously considered types of topological zeta functions.}
Classical topological zeta functions are singularity invariants of hypersurfaces.
Given $f\in \ZZ[X_1,\dotsc,X_n]$,
the topological zeta function of $f$ was
first defined by Denef and Loeser~\cite{DL92} by means of a limit ``$p\to 1$''
applied to Igusa's $p$-adic zeta function associated with $f$.
Later, the topological zeta function of $f$ was reinterpreted 
within the framework of motivic integration \cite{DL98}.
Not only do topological zeta functions retain crucial features of their
$p$-adic ancestors (see \cite[Thm~2.2]{DL92}), they have also been found to be
more amenable to both theoretical investigations and explicit computations, see
e.g.~\cites{LVP11,NV12} for some recent developments.

Using connections between subgroup and subring zeta functions and
$p$-adic integration going back to \cite{GSS88},
topological subalgebra and subgroup zeta functions of Lie algebras and nilpotent
groups, respectively, were introduced  by du~Sautoy and
Loeser \cite{dSL04}.
In~\cites{topzeta,topzeta2}, the author developed systematic means of computing
these zeta functions in favourable situations.
Substantial evidence indicates that they possess remarkable properties 
which are not present in the hypersurface case, see \cite[\S 8]{topzeta}.

\paragraph{Topological representation zeta functions of unipotent groups.}
The purpose of the present article is to initiate the study of topological
representation zeta functions and to establish them as interesting
group-theoretic invariants that deserve further attention.

First, after recalling some key results from \cite{SV14}
in~\S\ref{s:local},  
given a unipotent algebraic group $\GG$ over a number field $k$,
we define the topological representation zeta function $\zeta_{\GG,\topo}(s) \in
\QQ(s)$ of $\GG$ in \S\ref{s:topological} using $p$-adic formulae from
\cites{Vol10,SV14} and ideas from \cite{DL92}.
For a brief and informal sketch, suppose that $k = \QQ$ and
choose an arbitrary $\ZZ$-form $\G$ of $\GG$ as an affine group scheme.
Then for sufficiently large primes $p$,
each $\G(\ZZ_p)$ is a pro-$p$ group whose representation zeta function
$\zeta_{\G(\ZZ_p)}(s)$ enumerating twist-isoclasses is defined as above,
where $\ZZ_p$ denotes the $p$-adic integers.
Informally, we then define $\zeta_{\GG,\topo}(s)$ to be the constant term of 
$\zeta_{\G(\ZZ_p)}(s)$ as a series in $p-1$.
As an example, a $\ZZ$-form of the Heisenberg group, $\HH$ say, over $\QQ$ is
given by
\[\H(R) := \begin{bmatrix}
  1 & R & R \\
  0  & 1 & R \\
  0  & 0  & 1
\end{bmatrix}
\le\GL_3(R)\]
for commutative rings $R$.
It is well-known that
$$\zeta_{\H(\ZZ_p)}(s) = \frac{1-p^{-s}}{1-p^{1-s}}$$ for all primes $p$.
By formally expanding $p^z = (1+(p-1))^z$ in $p-1$ using the binomial series, we 
obtain $\zeta_{\H(\ZZ_p)}(s) = \frac{s}{s-1} + \cO(p-1)$ whence
$\zeta_{\HH,\topo}(s) = \frac{s}{s-1}$.

Having rigorously defined topological representation zeta functions, 
we establish some of their basic properties in~\S\ref{s:properties}.
Perhaps most interestingly, we find that they always have degree zero in $s$.
In contrast, the degrees of topological zeta functions of polynomials can vary
wildly, and the degrees of topological subalgebra zeta functions are only
understood conjecturally, see \cite[Conj.~I]{topzeta}.
Along the way, we also briefly consider topological representation zeta
functions attached to perfect Lie algebras in the spirit of \cite{AKOV13}.

Computations (largely ad~hoc) of $p$-adic representation zeta functions
of various nilpotent groups can be found in the theses of Ezzat~\cite{Ezzat} and
Snocken~\cite{Snocken}.
Being derived from $p$-adic zeta functions, we expect topological representation
zeta functions to be more easily computable than the former.
Indeed, building on the author's previous work \cites{topzeta,topzeta2},
in \S\ref{s:compute}, we develop a method for directly computing topological
representation zeta functions at least under additional hypotheses. 
With this method at our disposal, in~\S\ref{s:examples}, we provide numerous
examples of topological representation zeta functions, including a complete
classification in dimension six (see Table~\ref{tab:six}, p.~\pageref{tab:six}).
As a result of our work, our knowledge of specific examples of topological
representation zeta functions associated with unipotent groups far exceeds the
$p$-adic case.
Finally, in \S\ref{s:questions}, we discuss open questions that arose from 
remarkable patterns exhibited by the known examples of topological
representation zeta functions.

\subsection*{\it Acknowledgement}
The author is indebted to Christopher Voll for many interesting conversations.

\section{Local representation zeta functions}
\label{s:local}

We explain how unipotent algebraic groups give rise to pro-$p$
groups.  
We then recall a result from \cite{Vol10,SV14} on the shapes of the
representation zeta functions of these groups.

\subsection{Basic facts on unipotent groups over number fields}
\label{ss:unipotent}

\paragraph{Unipotent algebraic groups.}
The following is folklore, see \cite[Ch.~IV, \S 2]{DG70} and \cite[Ch.~XV]{milneAGS}.
Let $F$ be a field. 
By an \emph{$F$-group}, we mean an algebraic group over $F$.
As a non-intrinsic characterisation of unipotence,
we say that an $F$-group is \emph{unipotent} if for some $n\ge 1$,
it is isomorphic to an algebraic subgroup of the $F$-group 
of upper unitriangular $n\times n$-matrices for some $n$.
If $F$ is perfect, then a connected linear $F$-group is unipotent
if and only if it admits a central series of closed algebraic subgroups whose
non-trivial factors are $F$-isomorphic to the additive group.

Let $F$ have characteristic zero.
Given a finite-dimensional nilpotent Lie $F$-algebra $\bm\fg$,
the Baker-Campbell-Hausdorff series associated with $\fg$ is a polynomial.
Using it, we may endow $\GG(A) := \bm\fg \otimes_F A$ with a group structure for
each commutative $F$-algebra $A$.
The functor $A\mapsto \GG(A)$ is represented by a unipotent $F$-group
which we again denote by $\GG$.
The construction of $\GG$ from $\bm \fg$ is functorial and $\bm\fg \mapsto \GG$
furnishes an equivalence between the categories of finite-dimensional nilpotent
Lie $F$-algebras and unipotent $F$-groups; a quasi-inverse
is given by the usual functor taking an algebraic group to its Lie algebra.

\paragraph{Notation for number fields and their places.}
Throughout this article, we let $k$ denote a number field with ring of integers $\fo$.
We let $\Places_k$ denote the set of all non-Archimedean places of $k$.
Given $v \in \Places_k$, we let $k_v$ denote the $v$-adic completion of $k$.
We further let $\fo_v$ and $\RF_v$ denote the valuation ring and residue field
of $k_v$, respectively, and write $q_v = \card{\RF_v}$.
Let $\fp_v \in \Spec(\fo)$ be the prime ideal corresponding to $v$ and
let $p_v$ be the rational prime contained in $\fp_v$.
For a $p$-adic field $K$,
i.e.~a finite extension of the field $\QQ_p$ of $p$-adic numbers,
let $\fO_K$ denote the valuation ring of $K$,
let $\fP_K$ be the maximal ideal of $\fO_K$, and write $q_K =
\card{\fO_K/\fP_K}$. 
We let $\abs{\dtimes}_K$ and $\norm{\dtimes}_K$ denote the usual $\fP_K$-adic
absolute value and maximum norm, respectively; in particular, $\abs{\pi}_K =
q_K^{-1}$ for $\pi\in \fP_K^{\phantom 1}\!\setminus\fP_K^2$.

\paragraph{Integral forms and Lie groups.}
Let $\GG$ be a linear algebraic group over $k$.
By an \emph{$\fo$-form} of $\GG$, we mean an affine 
group scheme $\G$ of finite type over $\fo$ with
$\G \otimes_{\fo} k \approx_k \GG$.
Write $\GL_{n,R} = \GL_n\otimes_{\ZZ} R$.
Every $k$-isomorphism from $\GG$ onto an algebraic subgroup of some $\GL_{n,k}$
provides us with an $\fo$-form of $\GG$
as a closed subgroup scheme of $\GL_{n,\fo}$, see~\cite[\S 1]{Fom97}.
If $\G$ is an arbitrary $\fo$-form of $\GG$, there exists
$N\in \NN$ such that $\G \otimes_\fo \fo[1/N]$ embeds as a closed subgroup scheme into
$\GL_{n,\fo[1/N]}$.
Define $S = \{ v\in \Places_k : \ndivides{p_v} N\}$.
If $v\in \Places_k\setminus S$, then we may regard each $\G(\fo_v) \le \GL_n(\fo_v)$
as a compact $p_v$-adic Lie group.
If $\GG$ is unipotent, then
for almost all $v\in \Places_k\setminus S$, the group $\G(\fo_v)$ is a
torsion-free, nilpotent, finitely generated pro-$p_v$ group. 
While the family $(\G(\fo_v))_{v\in \Places_k\setminus S}$ of topological groups
depends on the choices made, it does so in a mild way.
Namely, if $\tilde\G$ is another $\fo$-form of $\GG$, then there exists a
finite set $\tilde S \subset \Places_k$ such that
$\G(\fo_v) \approx \tilde\G(\fo_v)$ as topological groups
for $v\in \Places_k\setminus \tilde S$; 
we may even assume that $\G(\fO_K) \approx \tilde\G(\fO_K)$ 
for finite extensions $K/k_v$.

\subsection{Representation zeta functions associated with unipotent groups}
\label{ss:local}

We recall the setting and statement of \cite[Thm~A]{SV14} in a form convenient
for our purposes.
First, we record the following elementary fact.

\begin{lemma}[{Cf.\ \cite[Lemma~2.1]{SV14}}]
  \label{lem:ptrg}
  Let $G$ be a finitely generated nilpotent pro-$p$ group.
  Then $\tilde r_n(G) < \infty$ for all $n\in \NN$ and $\tilde r_n(G) =
  \cO(n^\alpha)$ for some real number $\alpha \ge 0$.
\end{lemma}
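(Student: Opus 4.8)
The strategy is to parametrize twist-isoclasses by monomial data and thereby reduce to counting finite-index subgroups of $G$. Since $G$ is profinite, every continuous finite-dimensional complex representation of $G$ — and every continuous $1$-dimensional representation used for twisting — factors through a finite quotient $G/N$ with $N\normal G$ open, and such a quotient is a finite $p$-group. Finite nilpotent groups are monomial, so each representation of $G/N$ is induced from a linear character of a subgroup; pulling back, every continuous irreducible representation $\varrho$ of $G$ is of the form $\mathrm{Ind}_H^G\lambda$ for an open subgroup $H\le G$ with $[G:H]=\dim\varrho$ and a continuous character $\lambda\colon H\to\CC^\times$. As $G$ is pro-$p$, continuous characters take values in $p$-power roots of unity, so $\widehat G:=\mathrm{Hom}_{\mathrm{cont}}(G,\CC^\times)$ is the Pontryagin dual of the compact abelian group $G^{\mathrm{ab}}$, and likewise $\widehat H$ is the Pontryagin dual of $H^{\mathrm{ab}}$ for open $H\le G$.

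By the projection formula $\mu\otimes\mathrm{Ind}_H^G\lambda\cong\mathrm{Ind}_H^G\bigl((\mu|_H)\otimes\lambda\bigr)$, twisting $\mathrm{Ind}_H^G\lambda$ by $\mu\in\widehat G$ replaces $\lambda$ by $\lambda\cdot(\mu|_H)$; hence the twist-isoclass of $\mathrm{Ind}_H^G\lambda$ depends only on $H$ and on $\lambda$ modulo the subgroup $\widehat G|_H:=\{\mu|_H:\mu\in\widehat G\}$ of $\widehat H$. Together with monomiality this gives
\[
  \tilde r_n(G)\;\le\;\sum_{\substack{H\le G\\ [G:H]=n}}\card{\widehat H/(\widehat G|_H)}.
\]
Now the restriction map $\widehat G\to\widehat H$ is Pontryagin-dual to the map $H^{\mathrm{ab}}\to G^{\mathrm{ab}}$ induced by $H\incl G$, so $\widehat H/(\widehat G|_H)$ is dual to $\ker(H^{\mathrm{ab}}\to G^{\mathrm{ab}})=(H\cap[G,G])/[H,H]$. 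For $H$ of finite index in $G$, the subgroup $[H,H]$ has finite index in $[G,G]$ — a standard property of finitely generated nilpotent groups — so each summand equals $\card{(H\cap[G,G]):[H,H]}\le\card{[G,G]:[H,H]}$, which is finite; in particular $\tilde r_n(G)<\infty$.

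It remains to bound the sum polynomially in $n$. Finitely generated nilpotent pro-$p$ groups have polynomial subgroup growth, so $G$ has $O(n^{\beta})$ open subgroups of index $n$ for some $\beta=\beta(G)$. Moreover, commutator calculus in finitely generated nilpotent groups yields $\card{[G,G]:[H,H]}\le n^{e}$ for every open subgroup $H$ of index $n$, with $e=e(G)$ (each term of the lower central series of $H$ has index in the corresponding term of that of $G$ bounded by a fixed power of $[G:H]$). Combining, $\tilde r_n(G)=O(n^{\alpha})$ with $\alpha=\beta+e$.

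The representation-theoretic reductions above are routine; the points that need care are checking that the monomial parametrization genuinely over-counts twist-isoclasses — so that the displayed inequality is valid — and extracting the uniform polynomial bounds $O(n^{\beta})$ and $n^{e}$ from the structure theory of finitely generated nilpotent groups. One may bypass the latter, at the cost of invoking \cite{HMRC14}: picking a finitely generated dense nilpotent subgroup $\Gamma\le G$ exhibits $G$ as a continuous quotient of the pro-$p$ completion $\widehat\Gamma_p$, so that restriction along $\Gamma\to\widehat\Gamma_p\onto G$ gives $\tilde r_n(G)\le\tilde r_n(\widehat\Gamma_p)\le\tilde r_n(\Gamma)$, and both finiteness and polynomial growth of $\tilde r_n(\Gamma)$ are known for finitely generated nilpotent groups.
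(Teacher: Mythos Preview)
Your argument is correct and is essentially the route the paper indicates: the paper's proof consists of noting that $G$, being $p$-adic analytic, has polynomial subgroup growth and then asserting that the proof of \cite[Lemma~2.1]{SV14} for $\mathcal T$-groups carries over verbatim; your monomiality-plus-counting argument (bounding $\tilde r_n(G)$ by $\sum_{[G:H]=n}\card{\widehat H/(\widehat G|_H)}$ and controlling each summand via commutator indices) is exactly how one makes that transfer explicit. Your alternative reduction to a dense discrete $\Gamma$ is also valid, though the step $\tilde r_n(\widehat\Gamma_p)\le\tilde r_n(\Gamma)$ deserves one more sentence: one must check that a twisting character of $\Gamma$ witnessing an equivalence between two continuous $\widehat\Gamma_p$-representations necessarily takes values in $p$-power roots of unity and hence extends continuously.
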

\begin{proof}
  Since $G$ is $p$-adic analytic, it has polynomial subgroup growth
  \cite[Thm~3.19]{DdSMS99}.
  The proof of \cite[Lemma~2.1]{SV14} for the case of $\mathcal T$-groups now
  carries over to the present case.
\end{proof}

In particular, the definition of $\zeta_G(s)$ given in the introduction makes
perfect sense if $G$ is any finitely generated nilpotent pro-$p$ group.
Note that since, as is well-known, finite $p$-groups are monomial, $\tilde
r_n(G) \not= 0$ only if $n$ is a power of $p$.
By \S\ref{ss:unipotent}, if $\G$ is an $\fo$-form of a unipotent 
$k$-group, then Lemma~\ref{lem:ptrg} applies to almost all of the
groups $\G(\fo_v)$ for $v\in \Places_k$.
The following result explains the behaviour of $\zeta_{\G(\fo_v)}(s)$
under variation of $v$.

\begin{thm}[{Pf of \cite[Thm~A]{SV14}; cf.~\cite[Thm~D]{Vol10}}]
  \label{thm:SV_ThmA}
  Let $\G$ be an $\fo$-form of a unipotent $k$-group.
  Then there are separated $\fo$-schemes $V_1,\dotsc,V_r$ of finite type,
  rational functions $W_1,\dotsc,W_r \in \QQ(X,Y)$, and a finite set
  $S \subset \Places_k$ such that if $v\in \Places_k\setminus S$ and $K$ is any
  finite extension of $k_v$,
  then $\zeta_{\G(\fO_K)}(s) = \sum\limits_{i=1}^r \noof{V_i(\fO_K/\fP_K)} \dtimes
  W_i(q_K^{\phantom{s_1}},q_K^{-s})$---in particular, $\zeta_{\G(\fO_K)}(s)$ is
  rational in $q_K^{-s}$ over $\QQ$ and admits meromorphic continuation to all of $\CC$.
\end{thm}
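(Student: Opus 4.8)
The plan is to deduce the statement from the $p$-adic formulae of Voll and Stasinski--Voll \cites{Vol10,SV14} together with the standard dictionary, going back to Denef, between counting points on $\fo$-schemes and cone integrals. Concretely, the starting point is the Kirillov-orbit-type description of $\zeta_{\G(\fO_K)}(s)$ obtained in the proof of \cite[Thm~A]{SV14}: after fixing a Lie-algebra model of $\GG$ and an $\fo$-lattice, the twist-isoclass-counting zeta function of $\G(\fO_K)$ is expressed (for all but finitely many $v$ and all finite $K/k_v$) as a $\fp$-adic integral over $\fO_K^d$ whose integrand is built from absolute values of polynomials $f_1,\dots,f_m \in \fo[X_1,\dots,X_d]$ together with the extra parameter $q_K^{-s}$. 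The exponents of $q_K^{-s}$ arise from the ranks of certain ``commutator'' or ``antisymmetric'' matrices of linear forms evaluated at the integration variable; this is precisely the Pfaffian/rank stratification underlying the construction in \cite{SV14}.

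First I would recall that such a $\fp$-adic integral, after replacing the $\fp$-adic field by an arbitrary finite extension $K$ of $k_v$, is a \emph{cone integral} in the sense of du~Sautoy--Grunewald, or more directly a definable integral whose value is governed by a Denef-type formula. The key input is then the following: for a finite collection of $\fo$-schemes and polynomials, there is a finite set $S\subset\Places_k$ outside of which one can resolve the relevant singularities simultaneously over $\fo_{(S)}$ (embedded resolution / a suitable stratification), and for $v\notin S$ and any finite $K/k_v$ the integral evaluates to a finite $\QQ$-linear combination of terms $\noof{\bar V_i(\RF_K)}\cdot W_i(q_K,q_K^{-s})$, where the $\bar V_i$ are the reductions of finitely many $\fo$-schemes of finite type and the $W_i$ lie in $\QQ(X,Y)$. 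This is exactly the shape of \cite[Thm~2.2]{DL92}, \cite[Thm~4.5]{GSS88} and their relatives; the only novelty here is the presence of the second variable $Y=q_K^{-s}$, which is handled in the same way because $s$ enters only through the rank data, i.e.\ through a finite partition of $\fO_K^d$ into subsets on which the $q_K^{-s}$-exponent is constant, each piece being itself a $\fp$-adic integral of cone type.

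The steps, in order, would be: (1) quote the proof of \cite[Thm~A]{SV14} to write $\zeta_{\G(\fO_K)}(s)$ as an explicit $\fp$-adic integral $Z_K(s)$ depending on finitely many polynomials over $\fo$ and on the linear-algebra rank stratification; (2) subdivide the domain according to the constant value of the $q_K^{-s}$-exponent, so that $Z_K(s)$ becomes a finite $\ZZ[q_K^{-s}]$-combination of ordinary $\fp$-adic cone integrals; (3) apply the Denef--du~Sautoy--Grunewald evaluation of cone integrals to each piece, enlarging $S$ finitely many times to guarantee good reduction of the resolution data and its intersection strata; (4) collect terms: the resolution strata are the reductions of finitely many separated $\fo$-schemes $V_1,\dots,V_r$ of finite type, and the combinatorics of the resolution produces the rational functions $W_1,\dots,W_r\in\QQ(X,Y)$ with $X=q_K,\ Y=q_K^{-s}$; (5) conclude rationality of $\zeta_{\G(\fO_K)}(s)$ in $q_K^{-s}$ over $\QQ$, and meromorphic continuation to all of $\CC$ since a rational function in $q_K^{-s}$ is a meromorphic function of $s\in\CC$.

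The main obstacle, and the place where care is needed, is step (3) together with the uniformity of step (2): one must ensure that the finitely many varieties, polynomials, and the resulting resolution (or alternative stratification) can be chosen \emph{over $\fo$} — or over $\fo_{(S)}$ for a single finite $S$ — so that reduction modulo $\fp_v$ behaves well for \emph{all} $v\notin S$ \emph{simultaneously} and, crucially, also for \emph{all finite extensions} $K/k_v$, not merely for the $k_v$ themselves. For cone integrals this is precisely the content of the uniformity statements in \cite{GSS88} and \cite{DL92}: resolution of singularities over a field of characteristic zero spreads out over a ring of $S$-integers, and the Denef formula is then insensitive to the residue field, depending on it only through $q_K$ and through point counts $\noof{\bar V_i(\RF_K)}$ on the finitely many strata. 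Granting this, the presence of the extra variable $q_K^{-s}$ causes no essential difficulty, since it is introduced by a finite, $v$-independent partition of the integration domain. I would therefore present the proof as a reduction to the cited $p$-adic results, emphasising that the only new bookkeeping is the rank stratification from \cite{SV14}, and that everything else is the standard machinery behind \cite[Thm~A]{SV14} and \cite[Thm~D]{Vol10}.
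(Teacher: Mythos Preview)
Your outline correctly identifies the source---the paper does not prove this itself but cites it from \cites{SV14,Vol10}, revisiting the ingredients only when establishing Lemma~\ref{lem:good_shape}---and the broad architecture (Kirillov-orbit description, reduction to a $p$-adic integral, Denef-type evaluation) is right. The genuine gap is your step~(2).

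You propose to ``subdivide the domain according to the constant value of the $q_K^{-s}$-exponent'' so that $Z_K(s)$ becomes a \itemph{finite} $\ZZ[q_K^{-s}]$-combination of ordinary cone integrals. But the $s$-exponent at a point $(x,\yy)$ is governed by the $\fP_K$-adic valuations of the minors of the commutator matrix $\cR(\yy)$ (equivalently, by its elementary divisors over $\fO_K$), not merely by a residue-field rank; these valuations are unbounded, so no finite partition with constant $s$-exponent exists. In the actual argument, $s$ is kept as a continuous parameter in exponents of norms $\norm{F_i(\yy)}_K^s$ of finite sets of polynomials (see the integral~\eqref{eq:Zsub} recalled later in the paper), and the resulting integral is evaluated \itemph{as a whole} via \cite[Thm~2.2]{Vol10}. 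That theorem applies a principalisation of ideals---not du~Sautoy--Grunewald cone integrals or the formulae of \cites{GSS88,DL92}---to produce the schemes $V_i$ (as strata $E_U$ of the exceptional divisor) and the rational functions $W_i(X,Y)$ in one stroke; the variable $Y=q_K^{-s}$ emerges from the monomialised numerical data after principalisation, not from a prior discrete stratification.

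The fix is simply to delete step~(2): write $Z_K(s)$ in the shape of \cite[Cor.~2.11]{SV14} with $s$ in the exponents, and then invoke \cite[Thm~2.2]{Vol10} directly. Your steps (1) and (3)--(5) are otherwise sound, with the caveat that the uniformity in $v$ and in finite extensions $K/k_v$ is already built into Voll's theorem rather than requiring a separate spreading-out argument on your part.
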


\begin{rem}
  \label{r:SV_form}
  Let $\bm\fg$ be a finite-dimensional nilpotent Lie $k$-algebra.
  Choose an $\fo$-form $\fg$ of $\bm\fg$ 
  which is free as an $\fo$-module and 
  which satisfies $[\fg,\fg]\subset c!\fg$, where $c$ is the nilpotency class of
  $\bm\fg$.
  Stasinski and Voll~\cite[\S 2.1.2]{SV14} used the
  following $\fo$-form $\G$ of the unipotent $k$-group corresponding to
  $\bm\fg$. 
  Namely, for a commutative $\fo$-algebra $R$, define a group structure  on
  $\G(R) =\fg\otimes_{\fo} R$ using the Baker-Campbell-Hausdorff series,
  exactly as in \S\ref{ss:unipotent}.
  The resulting group scheme $\G$ over $\fo$ is unipotent in the sense of
  \cite[\S 2.1.1]{SV14}, i.e.~$\G$ is affine, smooth,
  and its geometric fibres are connected unipotent algebraic groups.
  For our purposes, we may discard finitely many places of $k$ as needed
  whence the particular choice of an $\fo$-form is immaterial.
\end{rem}

\section{Topological representation zeta functions}
\label{s:topological}

In the preceding section,
given a unipotent algebraic group $\GG$ over $k$, after choosing an arbitrary
$\fo$-form $\G$ of $\GG$, we obtained a family $(\G(\fo_v))_{v\in
  \Places_k\setminus S}$ of groups and associated representation zeta functions.
In this section,
we define the topological representation zeta function of $\GG$
by means of a limit ``$q_v \to 1$'' applied to the zeta functions
$\zeta_{\G(\fo_v)}(s)$. 

\subsection{Taking the limit ``\texorpdfstring{$q \to 1$}{q -> 1}''}

In the study of local zeta functions, we encounter families of
functions of the form $W(q_v^{\phantom {s_1}}\!\!,q_v^{-s_1},\dotsc,q_v^{-s_l})$, where $W \in
\QQ(X,Y_1,\dotsc,Y_l)$ and $v$ runs over (almost all elements of) $\Places_k$.
In this subsection, which summarises \cite[\S 5.1]{topzeta},
we describe conditions on the shape of $W$ that allow us to
pass to the limit ``$q_v\to 1$'', denoted $\red W$ below, by 
taking the constant term of $W(q_v,q_v^{-s_1},\dotsc,q_v^{-s_l})$ symbolically
expanded as a series in $q_v-1$.

Given a polynomial $e \in \QQ[s_1,\dotsc,s_l]$, using the binomial series,
we formally expand 
\[
X^e := (1 + (X-1))^e := \sum_{d=0}^\infty \binom e d (X-1)^d
\in \QQ[s_1,\dotsc,s_l]\llb X-1\rrb.
\]
The rule $f \mapsto f(X,X^{-s_1},\dotsc,X^{-s_l})$
then extends to an embedding of $\QQ(X,Y_1,\dotsc,Y_l)$ into the field
$\QQ(s_1,\dotsc,s_l)\llp X-1\rrp$ of formal Laurent series in $X-1$
over $\QQ(s_1,\dotsc,s_l)$. 

\begin{defn}
  \label{d:MM}
  Let $\MM[X,Y_1,\dotsc,Y_l]$ be the $\QQ$-algebra
  consisting of all those rational functions $W \in \QQ(X,Y_1,\dotsc,Y_l)$ 
  satisfying the following two conditions:
\begin{enumerate}[(a)]
\item
  \label{d:MMa}
  $W$ can be written in the form
  \[
  W = f\dtimes\prod_{(a,b)\in \ZZ^{1+l}\setminus \{0\}}(1-X^a Y_1^{b_1}\dotsb
  Y_l^{b_l})^{-e(a,b)},
  \]
  where $f \in \QQ[X^{\pm 1},Y_1^{\pm 1},\dotsc,Y_l^{\pm 1}]$, $e(a,b)\in
  \NN_0$ and $e(a,b) = 0$ for almost all $(a,b)$.
\item\label{d:MMb}
  $W(X,X^{-s_1},\dotsc,X^{-s_l}) \in \QQ(s_1,\dotsc,s_l)\llb X-1\rrb$
  is a \itemph{power series} in $X-1$ (instead of merely a Laurent series).
\end{enumerate}
\end{defn}

\begin{defn}
Let $\red{W}$ denote the image of $W\in \MM[X,Y_1,\dotsc,Y_l]$ under
\begin{align*}
\MM[X,Y_1,\dotsc,Y_l] & \into \QQ(s_1,\dotsc,s_l)\llb X-1\rrb \onto
\QQ(s_1,\dotsc,s_l), \\
f & \mapsto f(X,X^{-s_1},\dotsc,X^{-s_l}) \bmod {(X-1)}.
\end{align*}
\end{defn}

\begin{notation*}
In case $l = 1$, we just write $Y = Y_1$ and $s = s_1$.
\end{notation*}

The following generalisation of results from \cite{DL92}
provides the key to defining topological zeta functions via ``explicit formulae'' 
(in the sense of \cite[\S 3]{Den91a}) such as those in Theorem~\ref{thm:SV_ThmA}.

\begin{thm}
  \label{thm:pre_rigidity}
  Let $V_1,\dotsc,V_r$ be separated $\fo$-schemes of finite type,
  let $W_1,\dotsc,W_r\in \MM[X,Y_1,\dotsc,Y_l]$, and let $S\subset \Places_k$ be
  finite.
  If
  $\sum\limits_{i=1}^r \noof{V_i(\fO_K/\fP_K)} \dtimes W_i(q_K,Y_1,\dotsc,Y_l) = 0$
  for all $v\in \Places_k\setminus S$  and all finite unramified extensions $K/k_v$,
  then $\sum\limits_{i=1}^r \Euler(V_i(\CC)) \dtimes \red{W_i} = 0$.
\end{thm}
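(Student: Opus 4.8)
The plan is to clear denominators so that the hypothesis turns into an identity among bare point counts, then to pass to Euler characteristics via the Grothendieck--Lefschetz trace formula and the spreading-out of $\ell$-adic cohomology, and finally to reassemble using the structure of $\MM[X,Y_1,\dotsc,Y_l]$.

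For the first step, part~(a) of the definition of $\MM[X,Y_1,\dotsc,Y_l]$ lets us fix a single product $D = \prod_{(a,b)}(1 - X^a Y_1^{b_1}\dotsb Y_l^{b_l})^{E(a,b)}$, taken over $(a,b)\in\ZZ^{1+l}\setminus\{0\}$ with almost all $E(a,b) = 0$, such that $P_i := D\dtimes W_i \in \QQ[X^{\pm 1},Y_1^{\pm 1},\dotsc,Y_l^{\pm 1}]$ for every $i$. Since $D(q_K,Y_1,\dotsc,Y_l)$ is a non-zero element of the integral domain $\QQ[Y_1^{\pm 1},\dotsc,Y_l^{\pm 1}]$, the hypothesis gives $\sum_{i=1}^r \noof{V_i(\fO_K/\fP_K)}\dtimes P_i(q_K,Y_1,\dotsc,Y_l) = 0$ there, for all $v\in\Places_k\setminus S$ and all finite unramified $K/k_v$. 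Writing $P_i = \sum_\nu c_{i,\nu}(X)\,Y_1^{\nu_1}\dotsb Y_l^{\nu_l}$ with $c_{i,\nu}\in\QQ[X^{\pm 1}]$ and comparing coefficients of monomials in $Y_1,\dotsc,Y_l$, this is equivalent to $\sum_{i=1}^r \noof{V_i(\fO_K/\fP_K)}\dtimes c_{i,\nu}(q_K) = 0$ for every multi-index $\nu$ and every such $K$.

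For the second step, after enlarging $S$ we may assume that for $v\notin S$ the reduction $V_{i,\RF_v}$ is of finite type over $\RF_v$ with $\dim_{\QQ_\ell} H^j_c(V_{i,\overline{\RF_v}},\QQ_\ell) = \dim_\QQ H^j_c(V_i(\CC),\QQ) =: b_{i,j}$ for all $j$ (generic constancy of $\ell$-adic Betti numbers and Artin comparison), so that $\sum_j (-1)^j b_{i,j} = \Euler(V_i(\CC)) =: N_i$. By Grothendieck--Lefschetz, $\noof{V_i(\fO_K/\fP_K)} = \sum_j (-1)^j \sum_{t=1}^{b_{i,j}} \alpha_{i,j,t}(v)^{[K:k_v]}$, where the $\alpha_{i,j,t}(v)$ are the geometric Frobenius eigenvalues on $H^j_c(V_{i,\overline{\RF_v}},\QQ_\ell)$. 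Expanding $c_{i,\nu}(X) = \sum_c g_{i,c,\nu} X^c$ and substituting, the relation of the previous paragraph becomes, for each fixed $v$, a vanishing finite $\overline{\QQ}$-linear combination of the exponential functions $d\mapsto\beta^d$ with $\beta = \alpha_{i,j,t}(v) q_v^c$; as distinct such functions are linearly independent, the coefficients attached to all tuples $(i,j,t,c)$ producing a common value of $\beta$ must already sum to zero. Summing these sub-relations over all $\beta$ collapses---each tuple lying in exactly one class---to $\sum_i N_i\,c_{i,\nu}(1) = 0$ for every $\nu$, that is, to $Q(1,Y_1,\dotsc,Y_l) = 0$, where $Q := \sum_i N_i P_i$.

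Reassembly is the crux. Substituting $Y_j = X^{-s_j}$, each factor $1 - X^{a - (b_1 s_1 + \dotsb + b_l s_l)}$ of $D$ has a simple zero at $X = 1$ (as $a - \sum_j b_j s_j \ne 0$ whenever $(a,b)\ne 0$), so $D(X,X^{-s_1},\dotsc,X^{-s_l})$ has a zero of order $E := \sum_{(a,b)} E(a,b)$ there, with non-zero leading Taylor coefficient $\mu(s) = \prod_{(a,b)}\bigl(\sum_j b_j s_j - a\bigr)^{E(a,b)}$, while condition~(b) forces $P_i(X,X^{-s_1},\dotsc,X^{-s_l})$ to have order $\ge E$ at $X = 1$. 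Hence $\red{W_i} = \pi_i(s)/\mu(s)$, with $\pi_i(s)$ the coefficient of $(X-1)^E$ in $P_i(X,X^{-s_1},\dotsc,X^{-s_l})$; and since $\mu(s)$ is the same for all $i$, the claim $\sum_i N_i\red{W_i} = 0$ amounts to improving the order of vanishing of $Q(X,X^{-s_1},\dotsc,X^{-s_l})$ at $X = 1$ from $E$ (which we know, by condition~(b)) to $E+1$. The input for this must come from the per-$\beta$ sub-relations, not merely their sum: grouping the terms of $Q(X,X^{-s_1},\dotsc,X^{-s_l})$ by the Frobenius value $\beta$ at a fixed good $v$, those sub-relations make every class of tuples with a common $X$-exponent $c$ contribute $0$ and every remaining class contribute a series vanishing at $X = 1$, and one hopes to iterate, peeling a factor $X-1$ off $Q$ at each stage. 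I expect this to be the main obstacle, as it requires controlling the ``Tate-type'' coincidences $\alpha_{i,j,t}(v)q_v^c = \alpha_{i',j',t'}(v)q_v^{c'}$ with $c\ne c'$ (over varying $v$) through Deligne's purity and the $\ell$-independent description of the $\alpha_{i,j,t}(v)$; the most economical route is instead to base-change the $V_i$ from $\fo$ to $\QQ$---residue fields of finite unramified extensions of the $k_v$ realise, up to isomorphism, exactly the relevant finite fields---and to invoke the corresponding rigidity statement established by these means in \cite[\S 5.1]{topzeta}, which generalises \cite{DL92}.
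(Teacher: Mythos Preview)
The paper's own proof is a single line: it cites \cite[Thm~5.12]{topzeta}. Your proposal lands in the same place---invoking \cite[\S 5.1]{topzeta} in its final sentence---after an exploratory detour that you yourself flag as incomplete. The detour is honest about where the difficulty lies, but note that your step-2 conclusion $Q(1,Y_1,\dotsc,Y_l)=0$ yields only that $Q(X,X^{-s_1},\dotsc,X^{-s_l})$ vanishes to order at least $1$ at $X=1$; whenever $E\ge 1$ this is already subsumed by condition~(b), so summing the per-$\beta$ relations gains nothing toward the required order $E+1$. You correctly diagnose that the genuine content is in exploiting the individual per-$\beta$ relations across varying $v$, controlled by Deligne's purity---which is exactly what \cite{topzeta}, following \cite{DL92}, supplies. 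In short: your proposal is correct, and its effective route coincides with the paper's; the preceding paragraphs function as commentary on why the cited result is not a triviality rather than as an independent argument.
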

\begin{proof}
  This follows from \cite[Thm~5.12]{topzeta}.
\end{proof}

The topological Euler characteristics $\Euler(V_i(\CC))$ are taken with
respect to an arbitrary embedding $k \incl \CC$.
By interpreting these numbers as limits  of $\noof{V_i(\fO_K/\fP_K)}$ 
as ``$q_K\to 1$'' (see~\cite[\S 5.3]{topzeta}), Theorem~\ref{thm:pre_rigidity} fits
the informal description from  the introduction.

\subsection{Defining topological representation zeta functions of unipotent groups}

\begin{lemma}
  \label{lem:good_shape}
  In Theorem~\ref{thm:SV_ThmA}, we may assume that $W_1,\dotsc,W_r \in \MM[X,Y]$.
\end{lemma}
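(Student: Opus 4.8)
The plan is to massage the representation $\zeta_{\G(\fO_K)}(s) = \sum_{i=1}^r \noof{V_i(\fO_K/\fP_K)} \cdot W_i(q_K,q_K^{-s})$ from Theorem~\ref{thm:SV_ThmA} so that each rational function $W_i$ lands in $\MM[X,Y]$, without changing the value of the sum for almost all places. The first observation is that the \emph{specific} $p$-adic formulae produced in \cite{SV14} already have a very constrained shape: they arise from $\fP_K$-adic integrals (Denef-type ``explicit formulae'') and are therefore, up to a monomial prefactor, products of factors of the form $(1 - q_K^{a} q_K^{-bs})^{-1} = (1 - X^a Y^b)^{-1}$ with $(a,b) \in \ZZ^2\setminus\{0\}$, possibly after clearing denominators into the polynomial part $f$. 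So condition~\eqref{d:MMa} of Definition~\ref{d:MM} is essentially automatic from the way Theorem~\ref{thm:SV_ThmA} is proved; I would cite the proof of \cite[Thm~A]{SV14} (and the parallel discussion in \cite[Thm~D]{Vol10}) for this, absorbing any stray polynomial factors into $f$ and any finitely many bad places into $S$.

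The real content is arranging condition~\eqref{d:MMb}: that $W_i(X, X^{-s})$ be a genuine power series in $X-1$, not merely a Laurent series with a pole. Here the key point is that the \emph{individual} $W_i$ need not be so well-behaved, but the constraint is only on what is needed to apply Theorem~\ref{thm:pre_rigidity}; however, Lemma~\ref{lem:good_shape} as stated asks for each $W_i \in \MM[X,Y]$ separately. The standard device, going back to \cite{DL92} and used systematically in \cite[\S5]{topzeta}, is to multiply and divide by suitable ``balancing'' factors $(1 - X^{a})^{-1}$: a product of the form $\prod (1-X^{a_j}Y^{b_j})^{-e_j}$ develops a pole in $X-1$ of order equal to the number of factors with $b_j = 0$ (counted with multiplicity $e_j$), minus the order of vanishing coming from the polynomial part $f$. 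One regroups the factors so that each remaining atomic piece is of the form $\tfrac{1-X^{a}}{1-X^{a}Y^{b}}$ or $\tfrac{1}{1-X^{a}Y^{b}}$ with $b\neq 0$ — each of which evaluates to a power series in $X-1$ — times a polynomial in $X^{\pm 1}, Y^{\pm 1}$ that is divisible by an appropriate power of $(1-X)$. Since the \emph{total} zeta function $\zeta_{\G(\fO_K)}(s)$ is, by Lemma~\ref{lem:ptrg} and its consequences, a power series (indeed a Dirichlet series convergent in a half-plane, hence in particular with no pole at the ``trivial'' point corresponding to $X-1$), such a regrouping can be carried out term by term, at the cost of introducing new schemes $V_i$ (e.g.\ disjoint unions or products with tori $\GL_1^m$, whose point counts supply the extra monomial factors) and splitting or merging the existing ones.

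Concretely, I would argue as follows. Start from the formula of Theorem~\ref{thm:SV_ThmA}. For each $i$, write $W_i = f_i \cdot \prod_j (1 - X^{a_{ij}} Y^{b_{ij}})^{-e_{ij}}$ as in \eqref{d:MMa}; this is legitimate by the proof of \cite[Thm~A]{SV14}. Let $\delta_i \ge 0$ be the order of the pole of $W_i(X,X^{-s})$ at $X=1$. If all $\delta_i = 0$ we are done, so suppose some $\delta_i > 0$. Because $\sum_i \noof{V_i(\fO_K/\fP_K)} W_i(q_K, q_K^{-s}) = \zeta_{\G(\fO_K)}(s)$ has no pole at ``$q_K = 1$'' — it is a power series in $X-1$ after the substitution, by the convergence of the defining Dirichlet series and \cite[\S5.1]{topzeta} — the polar parts must cancel in the sum. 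One then performs the elementary partial-fractions bookkeeping from \cite[\S5.1--5.2]{topzeta}: write each $W_i$ as a $\QQ$-linear combination of products of the ``good'' atoms above (each in $\MM[X,Y]$) together with genuinely polar ``bad'' atoms, and observe that by the rigidity statement of Theorem~\ref{thm:pre_rigidity} applied to the bad part — or more simply by matching Laurent coefficients in $X-1$ on both sides over the function field $\QQ(q_K)$ — the coefficients of the bad atoms, weighted by the $\noof{V_i(\fO_K/\fP_K)}$, sum to zero identically in $q_K$; hence they may be deleted, leaving only terms in $\MM[X,Y]$. Replacing the list $V_1,\dots,V_r$ by the (finite) refined list of schemes carrying the good-atom coefficients, and enlarging $S$ to absorb finitely many exceptional places, yields the claim. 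The main obstacle I anticipate is the bookkeeping in this last step: making precise that the polar contributions can be cancelled \emph{at the level of the explicit formula} (so that the new $W_i$ lie in $\MM[X,Y]$ on the nose) rather than merely after taking $\red{(\cdot)}$, and keeping the auxiliary schemes $V_i$ separated and of finite type — but both are exactly the kind of manipulation developed in \cite[\S5]{topzeta}, so it should go through without surprises.
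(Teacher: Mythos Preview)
Your handling of condition~\eqref{d:MMa} is fine, but the argument for condition~\eqref{d:MMb} has a real gap. You want to conclude that the polar parts (in $X-1$) of the $W_i(X,X^{-s})$, weighted by $\noof{V_i(\fO_K/\fP_K)}$, cancel because the total $\zeta_{\G(\fO_K)}(s)$ ``has no pole at $q_K=1$''. But that sentence has no content: the point counts $\noof{V_i(\fO_K/\fP_K)}$ are not functions of a formal variable $X$ that one could substitute into (cf.\ Example~\ref{ex:basic}(ii), where an elliptic-curve count appears), so there is no way to expand the full weighted sum as a Laurent series in $X-1$ and read off vanishing of its principal part. Dirichlet-series convergence concerns $Y\to 0$, not $X\to 1$. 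And Theorem~\ref{thm:pre_rigidity} cannot be invoked to force the cancellation, since its hypotheses already require $W_i\in\MM[X,Y]$ --- precisely what you are trying to prove. The manipulations in \cite[\S 5]{topzeta} set up $\MM$ and establish rigidity; they do not contain a mechanism for cancelling polar contributions across unknown scheme-weights.

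The paper sidesteps this by refusing to treat the $W_i$ as black boxes. It reopens the proof of \cite[Thm~A]{SV14} via \cite[Thm~2.2]{Vol10}: the explicit formula there has the structural shape
\[
\zeta_{\G(\fO_K)}(s)-1 \;=\; F(q_K)\sum_{U\subset T} c_U(q_K)\,(q_K-1)^{\noof U+1}\,\Xi_{U,\{1\}}(q_K,\dotsc),
\]
with $F\in\MM[X,Y]$ explicit, each $c_U(q_K)$ a scheme point-count, and each $\Xi_{U,\{1\}}$ (after triangulating) a $\ZZ$-linear combination of lattice-point generating functions of rational cones of dimension at most $\noof U+1$. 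Such a cone generating function has pole order at most its dimension at $X=1$, so the built-in prefactor $(q_K-1)^{\noof U+1}$ already clears the pole; \cite[Lem.~6.11]{topzeta2} makes this precise and shows each resulting $W_U$ lies in $\MM[X,Y]$ \emph{individually}. No cross-term cancellation between different schemes is ever needed: regularity at $X=1$ is visible term by term from the cone structure, which is information your black-box approach discards.
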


Before giving a proof of Lemma~\ref{lem:good_shape}, let us use it to define
topological representation zeta functions.
Let $\GG$ be a unipotent algebraic group over $k$
and let $\G$ be an arbitrary $\fo$-form of $\GG$.
By Theorem~\ref{thm:SV_ThmA} and Lemma~\ref{lem:good_shape}, there are
$\fo$-schemes $V_1,\dotsc,V_r$ 
(separated, of finite type), rational functions
$W_1,\dotsc,W_r\in \MM[X,Y]$, and a finite $S\subset \Places_k$ such that
\begin{align}
\label{eq:zeta_GOK}
\zeta_{\G(\fO_K)}(s) & = \sum_{i=1}^r \noof{V_i(\fO_K/\fP_K)} \dtimes W_i(q_K^{\phantom {-s}}\!\!, \,q_K^{-s})
\end{align}
is an identity of analytic functions
for all $v\in \Places_k\setminus S$ and all finite extensions $K/k_v$.
For all these $v$ and $K$, the rational function
$\sum_{i=1}^r \noof{V_i(\fO_K/\fP_K)} \dtimes W_i(q_K,Y) \in \QQ(Y)$ then only depends on
$\G(\fO_K)$.
Hence, by Theorem~\ref{thm:pre_rigidity}, the following definition only depends on
$\GG$ and not on the choices of $\G$ or the $V_1,\dotsc,V_r$ and
$W_1,\dotsc,W_r$.

\begin{defn}
  \label{d:topzeta}
  Let $\GG$ be a unipotent algebraic group over $k$.
  Let $\G$, $V_1,\dotsc,V_r$, and $W_1,\dotsc,W_r$ be as above.
  The \emph{topological representation zeta function} of $\GG$ is
  \[
  \zeta_{\GG,\topo}(s) := \sum_{i=1}^r \Euler(V_i(\CC))\dtimes \red{W_i} \in \QQ(s).
  \]
\end{defn}

\begin{ex}
  \label{ex:basic}
  \quad
  \begin{enumerate}
  \item
    (See~\S\ref{s:intro}.)
    A $\ZZ$-form $\H$ of the Heisenberg group  $\HH$ over $\QQ$ is given by
    $\H(R) = \Bigl[\begin{smallmatrix}1 & R & R \\ 0 & 1 & R \\ 0 & 0 & 1\end{smallmatrix}\Bigr] \le
    \GL_3(R)$
    for commutative rings $R$.
    As a special case of \cite[Thm~B]{SV14},
    Voll and Stasinski proved that for all primes $p$ and all finite
    extensions $K/\QQ_p$, we have
    $\zeta_{\H(\fO_K)}(s) = \frac{1-q_K^{-s}}{1-q_K^{1-s}} =
    W(q_K^{\phantom{-s}}\!\!,q_K^{-s})$,
    where $W := \frac{1-Y}{1-XY} \in \MM[X,Y]$.
    We conclude that $\zeta_{\HH, \topo}(s) = \red{W} = s/(s-1)$.
  \item
    du~Sautoy~\cite{dS01} constructed a nilpotent Lie ring $\fg$ of additive
    rank~$9$ and class~$2$ whose ideal ideal zeta function depends on the number 
    $\noof{E(\FF_p)}$, where $E \subset \mathbf P^2_{\ZZ}$ is defined
    by $Y^2 Z = X^3-XZ^2$ (so $E\otimes_{\ZZ} \QQ$ is an elliptic curve).
    Let $\GG$ be the unipotent $\QQ$-group corresponding to $\fg \otimes_\ZZ \QQ$.
    Snocken~\cite[Thm~5.4]{Snocken} computed the local representation zeta functions of
    a $\ZZ$-form, $\G$ say, of $\GG$ (in our terminology) and found
    them to be given by
    $\zeta_{\G(\ZZ_p)}(s)  = 
    W_1(p,p^{-s}) + \noof{E(\FF_p)} \dtimes W_2(p,p^{-s})$,
    where $W_1 := \frac{1-Y^3}{1-X^3Y^3}$
    and $W_2 := \frac{(X-1)(Y-1)Y}{(1-X^2Y^2)(1-X^3Y^3)}$ are elements of
    $\MM[X,Y]$.
    Looking at Snocken's calculation, 
    it is easy to see that at least for odd $p$, we may replace ``$\ZZ_p$'' by
    ``$\fO_K$''   and ``$p$'' by ``$q_K$'' in this identity for any finite extension $K/\QQ_p$.
    Since $\Euler(E(\CC)) = 0$, we conclude that $\zeta_{\GG,\topo}(s) =
    s/(s-1)$, which coincides with $\zeta_{\HH,\topo}(s)$ from the
    preceding example.

    While topological representation zeta functions of unipotent groups
    are coarser invariants than their $p$-adic counterparts, note that in our
    example, $\zeta_{\GG,\topo}(s)$ nonetheless retains essential analytic properties of
    the $p$-adic zeta functions $\zeta_{\G(\fO_K)}(s)$.
    Namely, if $s_0\in \CC$ is a pole ($s_0=1$) or zero ($s_0=0$) of
    $\zeta_{\GG,\topo}(s)$, then $s_0$ is also a pole or zero of
    $\zeta_{\G(\fO_K)}(s)$, respectively.
    We note that for poles of topological zeta functions, a similar
    behaviour is a general phenomenon by \cite[Thm~2.2]{DL92} which 
    provides a key motivation for studying topological zeta functions in the first place.
  \end{enumerate}
\end{ex}

The preceding two examples are misleading in that both the underlying
$p$-adic computations and the final results are quite simple.
In general, due to the reliance (via \cite[Thm~2.1]{Vol10}) of their proofs on
the usually impractical step of constructing  a principalisation of ideals,
Theorem~\ref{thm:SV_ThmA} and Lemma~\ref{lem:good_shape} do not 
provide us with a means of explicitly computing $p$-adic or topological
representation zeta functions.
Despite such theoretical obstacles, in~\S\ref{s:compute}, we develop a
practical method for explicitly computing  $\zeta_{\GG,\topo}(s)$ at least in
favourable situations.
This will allow us to determine a substantial number of interesting examples of these zeta 
functions, see~\S\ref{s:examples}.
 Our computations of topological zeta functions include many
examples with unknown associated $p$-adic zeta functions.

\begin{proof}[Proof of Lemma~\ref{lem:good_shape}]
  First recall the main ingredients featuring in the proof of
  Theorem~\ref{thm:SV_ThmA}. 
  Clearly, we may assume that $\GG$ is non-abelian.
  As we are free to enlarge $S$,
  we may assume that $\G$ is an $\fo$-form 
  constructed from a suitable nilpotent Lie $\fo$-algebra
  in \cite[\S 2.1.2]{SV14}, see Remark~\ref{r:SV_form}. 
  As explained in \cite[\S\S 2.2--2.3]{SV14},
  for almost all $v\in \Places_k$ and all finite extensions $K/k_v$,
  the zeta function $\zeta_{\G(\fO_K)}(s)$ then coincides
  with the Poincar\'e series $\cP_{\mathcal R,\mathcal S,\fO_K}(s)$
  (see~\cite[Prop.~2.9]{SV14})
  attached to certain matrices $\mathcal R$ and $\mathcal S$ of linear forms
  in $n = \dim([\GG,\GG])> 0$ variables over $\fo$.
  The function $\cP_{\mathcal R,\mathcal S,\fO_K}(s)$ is a univariate
  specialisation of $P_{\cR,\cS,K}(\mathbf r, \mathbf
  s)$ from \cite[p.~1199]{Vol10} (cf.~\cite[p.~517]{SV14}).
  Using \cite[p.~1201]{Vol10},
  \[
  \bigl(1-q_K^{-1}\bigr)^2
  \bigl(1-q_K^{-2}\bigr)
  \dotsb
  \bigl(1-q_K^{-(n-1)}\bigr)
  (P_{\mathcal R,\mathcal S,K}(\mathbf r, \mathbf s)-1) =
  Z_{\{1\}}\bigl(-\mathbf r, -\mathbf s,\!\sum\mathbf r +\sum\mathbf s - n -1\bigr),
  \]
  where $Z_{\{1\}}$ is defined in \cite[p.~1194]{Vol10} 
  as a specialisation of \cite[Eqn~(6), p.~1191]{Vol10}.
  Using \cite[Thm~2.2]{Vol10} and Voll's notation, after further excluding
  finitely many places of $k$,
  \begin{equation}
    \label{eq:minus1}
    \zeta_{\G(\fO_K)}(s)-1 =
    F(q_K) \dtimes \sum_{U\subset T} c_U(q_K) \dtimes (q_K-1)^{\noof U +1}\dtimes \Xi_{U,\{1\}}(q_K,\dotsc),
  \end{equation}
  where
  $F(X) =
  X^{-1-n(n-1)/2}\dtimes \prod_{j=2}^{n-1} \frac{1-X^{-1}}{1-X^{-j}} 
  \in \MM[X,Y]$.
  We did not spell out the specific substitutions applied to the 
  functions $\Xi_{U,\{1\}}$ as these are without relevance here;
  they will however become important in the proof of
  Proposition~\ref{prop:infinity} below.
  By construction, the $c_U(q_K)$ are the numbers of
  $(\fO_K/\fP_K)$-rational points of certain quasi-projective $\fo$-schemes.
  Using a suitable triangulation, as indicated in the proof
  of \cite[Prop.~2.1, pp.~1196--1197]{Vol10}, each 
  $\Xi_{U,\{1\}}(q_K,\dotsc)$ can be written as a $\ZZ$-linear combination of rational functions in
  $q_K^{\phantom{-s}}\!\!$ and $q_K^{-s}$, each of which
  is obtained by a monomial substitution from the generating function
  enumerating lattice points in a rational polyhedral cone of dimension at most
  $\noof U + 1$.
  Hence, using~\cite[Lem.~6.11]{topzeta2}
  (with the first column
  of $A$ replaced by the
  transpose of $((\nu_u)_{u\in U},1)\in \NN^U\times\NN$), 
  if $W_U \in \QQ(X,Y)$ with $W_U(q_K^{\phantom
    s},q_K^{-s}) = (q_K-1)^{\noof U+1}\dtimes \Xi_{U,\{1\}}(q_K,\dotsc)$,
  then $W_U\in \MM[X,Y]$ whence the claim follows.
\end{proof}

\subsection{Topological representation zeta functions from perfect Lie algebras}
\label{ss:perfect}

Recall that $r_n(G)$ denotes the number of equivalence
classes (in the usual sense) of continuous irreducible representations
$G\to\GL_n(\CC)$ of a topological group $G$.
It is well-known that if $G$ is compact and $p$-adic analytic, then
$r_n(G)<\infty$ for all $n\ge 1$ if and only if $G$ is FAb, i.e.~every open
subgroup of $G$ has finite abelianisation. 
In that case, let $\zeta_G^{\irr}(s) = \sum_{n=1}^\infty r_n(G)n^{-s}$;
note that in contrast to $\zeta_G(s)$ from above,
we do not take into account twisting.
In~\cite{AKOV13}, uniformity results for families of 
representation zeta functions $\zeta_G^{\irr}(s)$ derived from a fixed global Lie
lattice were obtained.
We now briefly describe how associated topological zeta functions can be
obtained in the same context.

Let $\bm\fg$ be a finite-dimensional perfect Lie $k$-algebra and choose an
arbitrary $\fo$-form $\fg$ of $\bm\fg$.
By~\cite[\S 2.1]{AKOV13}, there exists a finite set $S\subset \Places_k$ such that
for all $v\in \Places_k\setminus S$ and all \itemph{unramified} finite extensions $K/k_v$,
we may naturally endow $\fP_K (\fg\otimes_{\fo}\fO_K)$ with the structure of a
FAb $K$-analytic pro-$p_v$ group, denoted $\G^1(\fO_K)$.
The analysis of the zeta functions $\zeta_{\G^1(\fO_K)}^{\irr}(s)$
in \cite[\S\S 3--4]{AKOV13} and the results from \cite{SV14} used above
rely on the same core ingredients.
In particular, the $p$-adic integrals in \cite[Cor.~3.7]{AKOV13} and
\cite[Cor.~2.11]{SV14} are of the same shape and the key arguments from the proof of
Lemma~\ref{lem:good_shape} apply to both cases.
It thus follows from \cite[\S 4]{AKOV13} that after replacing $S$ by a finite superset,
there are separated $\fo$-schemes $V_1,\dotsc,V_r$ of finite type and
$W_1,\dotsc,W_r\in \MM[X,Y]$ such that
$\zeta_{\G^1(\fO_K)}(s) = \sum_{i=1}^r \noof{V_i(\fO_K/\fP_K)} \dtimes W_i(q_K^{\phantom{-s}}\!\!,q_K^{-s})$
for all $v\in \Places_k\setminus S$ and all unramified finite extensions $K/k_v$.
By Theorem~\ref{thm:pre_rigidity}, we may then unambiguously define the
\emph{topological representation zeta function} of $\bm\fg$ to be
$\zeta_{\bm\fg,\topo}^{\irr}(s) = \sum_{i=1}^r \Euler(V_i(\CC))\dtimes
\red{W_i}$.
For example, the explicit $p$-adic formulae in \cite[Thm~1.2]{AKOV12} and
\cite[Thm~1.4]{AKOV12} show that
$\zeta_{\mathfrak{sl}_2(\QQ),\topo}^{\irr} = \frac{s+2}{s-1}$ and
$\zeta_{\mathfrak{sl}_3(\QQ),\topo}^{\irr} = \frac{6(s+1)(s+2)}{(3s-2)(2s-1)}$.\\

While the focus of the present article is on the unipotent case,
we note that statements analogous to the properties of topological
representation zeta functions of unipotent groups derived in
\S\ref{s:properties} below also hold for the functions
$\zeta_{\bm\fg,\topo}^{\irr}(s)$ (see~Remark~\ref{rem:perfect_properties}).

\section{Fundamental properties}
\label{s:properties}

We derive some basic facts about topological representation zeta functions.

\subsection*{Direct products}

As we will now see, it suffices to consider topological representation zeta
functions of unipotent groups associated with $\oplus$-indecomposable nilpotent Lie algebras. 

\begin{lemma}[{Cf.~\cite[\S 4.1.1]{Snocken}}]
  $\zeta_{G_1\times G_2}(s) = \zeta_{G_1}(s)\dtimes \zeta_{G_2}(s)$
  for all finitely generated nilpotent pro-$p$ groups $G_1$ and $G_2$.
\end{lemma}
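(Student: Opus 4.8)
The plan is to reduce the multiplicativity of $\zeta_{G_1 \times G_2}(s)$ to the statement that twist-isoclasses of irreducible representations of a direct product $G_1 \times G_2$ are in bijection with pairs of twist-isoclasses of $G_1$ and $G_2$, in a way that multiplies degrees. First I would recall that for finite groups, and more generally for profinite groups with the relevant finiteness, every continuous irreducible representation of $G_1 \times G_2$ over $\CC$ is (up to equivalence) an external tensor product $\varrho_1 \boxtimes \varrho_2$ of continuous irreducibles $\varrho_i$ of $G_i$, with $\dim(\varrho_1 \boxtimes \varrho_2) = \dim(\varrho_1)\dim(\varrho_2)$; for finitely generated nilpotent pro-$p$ groups this is standard since the relevant representations factor through finite $p$-group quotients, where the classical theorem applies. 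This gives a bijection between $\Irr(G_1 \times G_2)$ and $\Irr(G_1) \times \Irr(G_2)$ before quotienting by twists.

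Next I would check that this bijection is compatible with twisting. A $1$-dimensional representation $\alpha$ of $G_1 \times G_2$ is exactly $\alpha_1 \boxtimes \alpha_2$ for $1$-dimensional $\alpha_i$ of $G_i$, and $\alpha \otimes (\varrho_1 \boxtimes \varrho_2) \cong (\alpha_1 \otimes \varrho_1) \boxtimes (\alpha_2 \otimes \varrho_2)$. Hence the equivalence relation of twist-equivalence on $\Irr(G_1 \times G_2)$ corresponds precisely, under the bijection above, to the product of the twist-equivalence relations on $\Irr(G_1)$ and $\Irr(G_2)$. Passing to quotients, we obtain a degree-preserving bijection between twist-isoclasses of $G_1 \times G_2$ of dimension $n$ and pairs $(n_1, n_2)$ with $n_1 n_2 = n$ of twist-isoclasses of $G_1$ and $G_2$ of respective dimensions $n_1, n_2$. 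This yields $\tilde r_n(G_1 \times G_2) = \sum_{n_1 n_2 = n} \tilde r_{n_1}(G_1)\tilde r_{n_2}(G_2)$, which is exactly the coefficient identity underlying the Dirichlet-series product $\zeta_{G_1}(s)\zeta_{G_2}(s)$.

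Finally, I would note a convergence remark: by Lemma~\ref{lem:ptrg} each $\tilde r_n(G_i)$ is polynomially bounded, so both Dirichlet series converge in a common right half-plane and the Dirichlet convolution is valid there, giving the identity of rational-in-$p^{-s}$ (indeed of meromorphic) functions $\zeta_{G_1\times G_2}(s) = \zeta_{G_1}(s)\zeta_{G_2}(s)$ as claimed.

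The main obstacle I anticipate is being careful about the passage from the classical finite-group statement ``irreducibles of a direct product are external tensor products'' to the pro-$p$ setting and, more importantly, verifying that twist-equivalence genuinely factors as a product relation — i.e.\ that if $\varrho_1 \boxtimes \varrho_2$ is twist-equivalent to $\varrho_1' \boxtimes \varrho_2'$ then each $\varrho_i$ is twist-equivalent to $\varrho_i'$ separately, with no ``diagonal'' twists mixing the two factors. This follows because a $1$-dimensional representation of $G_1 \times G_2$ necessarily splits as $\alpha_1 \boxtimes \alpha_2$, but it is the point that deserves an explicit (if brief) argument; everything else is bookkeeping with Dirichlet series.
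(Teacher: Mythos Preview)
Your proposal is correct and follows essentially the same approach as the paper: both reduce the bijection $\Irr(G_1)\times\Irr(G_2)\to\Irr(G_1\times G_2)$ to the classical finite-group case, check that twisting factors through the two components (the paper writes this as $(\chi_1\#\chi_2)\cdot(\psi_1\#\psi_2)=(\chi_1\psi_1)\#(\chi_2\psi_2)$ on characters, which is your $\alpha\otimes(\varrho_1\boxtimes\varrho_2)\cong(\alpha_1\otimes\varrho_1)\boxtimes(\alpha_2\otimes\varrho_2)$), and conclude with the Dirichlet convolution identity for $\tilde r_n$. Your version is simply a bit more explicit about the twist-compatibility and convergence, but the argument is the same.
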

\begin{proof}
  For a topological group $H$,
  let $\Irr(H)$ denote the set of characters of its continuous, irreducible,
  finite-dimensional, complex representations.
  By reducing to the well-known case of finite groups (see e.g.~\cite[8.4.2]{Rob96}),
  we obtain a bijection
  \[
  \Irr(G_1)\times \Irr(G_2) \to \Irr(G_1\times G_2), \quad
  (\chi_1,\chi_2) \mapsto \left(\chi_1\#\chi_2\colon (g_1,g_2) \mapsto \chi_1(g_1) \chi_2(g_2)\right).
  \]
  As $(\chi_1\# \chi_2) \dtimes (\psi_1\#\psi_2) = (\chi_1\psi_1) \#
  (\chi_2\psi_2)$ and $(\chi_1\#\chi_2)(1) = \chi_1(1)\chi_2(1)$
  for $\chi_i,\psi_i \in \Irr(G_i)$, we have
  $\tilde r_n(G_1\times G_2) = \sum\limits_{\divides d n} \tilde r_d(G_1) \tilde
  r_{n/d}(G_2)$
  whence the claim follows.
\end{proof}

\begin{cor}
  \label{cor:product}
  $\zeta_{\GG\times_k \HH,\topo}(s) =
  \zeta_{\GG,\topo}(s) \dtimes \zeta_{\HH,\topo}(s)$ for unipotent
  $k$-groups $\GG$ and $\HH$.
  \qed
\end{cor}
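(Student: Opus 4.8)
The plan is to deduce Corollary~\ref{cor:product} directly from the preceding Lemma together with the behaviour of the ``$q\to1$'' limit under products. First I would fix $\fo$-forms $\G$ of $\GG$ and $\H$ of $\HH$; then $\G\times_\fo\H$ is an $\fo$-form of $\GG\times_k\HH$, and for almost all $v\in\Places_k$ and all finite extensions $K/k_v$ we have $(\G\times_\fo\H)(\fO_K)=\G(\fO_K)\times\H(\fO_K)$. Since both factors are finitely generated nilpotent pro-$p_v$ groups for almost all $v$, the Lemma gives $\zeta_{(\G\times_\fo\H)(\fO_K)}(s)=\zeta_{\G(\fO_K)}(s)\dtimes\zeta_{\H(\fO_K)}(s)$ as an identity of analytic functions, for all but finitely many $v$ and all finite $K/k_v$.

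Next I would invoke Theorem~\ref{thm:SV_ThmA} and Lemma~\ref{lem:good_shape} to write $\zeta_{\G(\fO_K)}(s)=\sum_{i=1}^r\noof{V_i(\fO_K/\fP_K)}\dtimes W_i(q_K^{\phantom{-s}}\!\!,q_K^{-s})$ with $W_i\in\MM[X,Y]$, and similarly $\zeta_{\H(\fO_K)}(s)=\sum_{j=1}^{r'}\noof{V_j'(\fO_K/\fP_K)}\dtimes W_j'(q_K^{\phantom{-s}}\!\!,q_K^{-s})$ with $W_j'\in\MM[X,Y]$. Multiplying these two explicit formulae, and using that $\noof{(V_i\times_\fo V_j')(\fO_K/\fP_K)}=\noof{V_i(\fO_K/\fP_K)}\dtimes\noof{V_j'(\fO_K/\fP_K)}$, expresses $\zeta_{(\G\times_\fo\H)(\fO_K)}(s)$ as $\sum_{i,j}\noof{(V_i\times_\fo V_j')(\fO_K/\fP_K)}\dtimes(W_iW_j')(q_K^{\phantom{-s}}\!\!,q_K^{-s})$, provided $\MM[X,Y]$ is closed under multiplication. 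Closure under products is essentially immediate from Definition~\ref{d:MM}: condition~\eqref{d:MMa} is multiplicative (multiply the $f$'s, add the exponents $e(a,b)$), and condition~\eqref{d:MMb} is preserved since a product of power series in $X-1$ is again a power series. I would also need that $\red{\blank}$ is a $\QQ$-algebra homomorphism, i.e.\ $\red{W_iW_j'}=\red{W_i}\dtimes\red{W_j'}$; this holds because $\red{\blank}$ is the composite $\MM[X,Y]\into\QQ(s)\llb X-1\rrb\onto\QQ(s)$ of two ring homomorphisms (reduction modulo $(X-1)$).

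Then the uniqueness built into Definition~\ref{d:topzeta} does the rest: since $\zeta_{(\G\times_\fo\H)(\fO_K)}(s)=\sum_{i,j}\noof{(V_i\times_\fo V_j')(\fO_K/\fP_K)}\dtimes(W_iW_j')(q_K^{\phantom{-s}}\!\!,q_K^{-s})$ is a valid explicit formula of the type in~\eqref{eq:zeta_GOK}, Theorem~\ref{thm:pre_rigidity} (via the discussion preceding Definition~\ref{d:topzeta}) shows that $\zeta_{\GG\times_k\HH,\topo}(s)=\sum_{i,j}\Euler\bigl((V_i\times_\fo V_j')(\CC)\bigr)\dtimes\red{W_iW_j'}$. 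Using the Künneth-type multiplicativity $\Euler\bigl((V_i\times_\fo V_j')(\CC)\bigr)=\Euler(V_i(\CC))\dtimes\Euler(V_j'(\CC))$ of the topological Euler characteristic and the multiplicativity of $\red{\blank}$ just noted, this factors as $\bigl(\sum_i\Euler(V_i(\CC))\dtimes\red{W_i}\bigr)\dtimes\bigl(\sum_j\Euler(V_j'(\CC))\dtimes\red{W_j'}\bigr)=\zeta_{\GG,\topo}(s)\dtimes\zeta_{\HH,\topo}(s)$.

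The argument has no serious obstacle; the only points requiring care are the bookkeeping facts that $\MM[X,Y]$ is a subalgebra of $\QQ(X,Y)$ (so products of the $W_i$'s land there again) and that both $\red{\blank}$ and $\Euler(\blank(\CC))$ are multiplicative on products of schemes — the latter being the standard behaviour of the compactly-supported Euler characteristic under Cartesian products. Everything else is a direct transfer of the group-theoretic product formula in the Lemma through the already-established machinery of \S\ref{s:topological}.
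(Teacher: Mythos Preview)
Your argument is correct and is precisely the unpacking of the paper's ``\qed'': the paper treats the corollary as immediate from the preceding Lemma together with the well-definedness machinery of \S\ref{s:topological}, and you have spelled out exactly those steps (multiplicativity of point counts, of $\MM[X,Y]$ as a $\QQ$-algebra, of $\red{\blank}$, and of Euler characteristics). There is nothing to add.
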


\subsection*{Base extension}

Passing from $p$-adic to topological representation zeta functions removes the
possibly very subtle arithmetic dependence on the defining number field $k$
indicated by Theorem~\ref{thm:SV_ThmA}.

\begin{prop}
  \label{prop:base}
  Let $\GG$ be a unipotent $k$-group, let $\tilde k/k$ be a
  finite extension, and let $\tilde\GG$ be the $\tilde k$-group obtained from
  $\GG$  via base extension.
  Then $\zeta_{\GG{\phantom{\tilde\GG}}\!\!\!\!\!,\topo}(s) = \zeta_{\tilde\GG,\topo}(s)$.
\end{prop}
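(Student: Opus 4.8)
The strategy is to exploit the fact that both sides of the claimed equality are obtained from the same ``explicit formula'' data, and to show that base extension does not change the topological zeta function because it only amounts to passing from $k$ to $\tilde k$ at the level of the residue-field point counts and Euler characteristics, which are invariant under such a change. Concretely, fix an $\fo$-form $\G$ of $\GG$. Then $\tilde\G := \G \otimes_{\fo} \tilde\fo$ is an $\tilde\fo$-form of $\tilde\GG$, where $\tilde\fo$ is the ring of integers of $\tilde k$. Apply Theorem~\ref{thm:SV_ThmA} together with Lemma~\ref{lem:good_shape} to $\G$: we obtain separated $\fo$-schemes $V_1,\dotsc,V_r$ of finite type, rational functions $W_1,\dotsc,W_r\in\MM[X,Y]$, and a finite $S\subset\Places_k$ such that $\zeta_{\G(\fO_K)}(s) = \sum_i \noof{V_i(\fO_K/\fP_K)}\dtimes W_i(q_K^{\phantom{-s}}\!\!,q_K^{-s})$ for every $v\in\Places_k\setminus S$ and every finite extension $K/k_v$.

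The key observation is that this same formula, with the $V_i$ replaced by their base extensions $\tilde V_i := V_i\otimes_{\fo}\tilde\fo$, computes $\zeta_{\tilde\G(\fO_K)}(s)$ for almost all places $\tilde v$ of $\tilde k$ and all finite extensions $K/\tilde k_{\tilde v}$. Indeed, any such $K$ is itself a finite extension of $k_v$ where $v$ is the place of $k$ below $\tilde v$; discarding the finitely many $\tilde v$ lying over $S$, the identity of Theorem~\ref{thm:SV_ThmA} applied to $\G$ already gives $\zeta_{\tilde\G(\fO_K)}(s) = \zeta_{\G(\fO_K)}(s) = \sum_i \noof{V_i(\fO_K/\fP_K)}\dtimes W_i(q_K^{\phantom{-s}}\!\!,q_K^{-s})$, and $\noof{V_i(\fO_K/\fP_K)} = \noof{\tilde V_i(\fO_K/\fP_K)}$ since $\fO_K/\fP_K$ is an $\tilde\fo$-algebra via $\fO_K\supseteq\tilde\fo_{\tilde v}$. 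Hence the data $(\tilde V_i, W_i)$ is a valid choice in the sense of Definition~\ref{d:topzeta} for $\tilde\GG$, so by the well-definedness established there (via Theorem~\ref{thm:pre_rigidity}) we have $\zeta_{\tilde\GG,\topo}(s) = \sum_i \Euler(\tilde V_i(\CC))\dtimes\red{W_i}$. Finally, $\Euler(\tilde V_i(\CC)) = \Euler(V_i(\CC))$ because $\tilde V_i(\CC) = (V_i\otimes_{\fo}\tilde\fo)(\CC)$ and $V_i(\CC)$ agree as complex varieties once one fixes compatible embeddings $k\incl\tilde k\incl\CC$ — more precisely, $\tilde V_i \otimes_{\tilde\fo}\CC \cong V_i\otimes_{\fo}\CC$ as $\CC$-schemes — so their topological Euler characteristics coincide. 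Comparing with $\zeta_{\GG,\topo}(s) = \sum_i \Euler(V_i(\CC))\dtimes\red{W_i}$ from Definition~\ref{d:topzeta} yields the claim.

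The main point requiring care is the bookkeeping around places: one must check that an $\fo$-form $\G$ of $\GG$ does base-extend to an $\tilde\fo$-form $\tilde\G$ of $\tilde\GG$ (clear, since $\tilde\G\otimes_{\tilde\fo}\tilde k = (\G\otimes_{\fo}k)\otimes_k\tilde k = \GG\otimes_k\tilde k = \tilde\GG$), and that almost all places of $\tilde k$ lie over places of $k$ outside the finite bad set $S$, and that for such a place $\tilde v$ and a finite extension $K/\tilde k_{\tilde v}$, the group $\tilde\G(\fO_K)$ coincides with $\G(\fO_K)$ as a topological group — this is immediate since $\tilde\G(\fO_K) = \G(\fO_K)$ on the nose (both are $\G$ evaluated on the $\fo$-algebra $\fO_K$). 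Granting this, the argument is essentially a formal unwinding of the definitions; there is no genuine analytic or geometric obstacle, which is precisely the content of the proposition: topological representation zeta functions forget the arithmetic of $k$.
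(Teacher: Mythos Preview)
Your proof is correct and is precisely a detailed unwinding of the paper's one-line argument (``Immediate from the stability under base extension in Theorem~\ref{thm:SV_ThmA}''): the explicit formula of Theorem~\ref{thm:SV_ThmA} is valid for all finite extensions $K/k_v$, so in particular for all finite extensions of $\tilde k_{\tilde v}$ when $\tilde v\mid v$, and base-extending the $V_i$ to $\tilde\fo$ changes neither the point counts over residue fields nor the complex Euler characteristics. Your careful bookkeeping around places, $\fo$-forms, and the identification $\tilde\G(\fO_K)=\G(\fO_K)$ makes explicit exactly what the paper leaves to the reader.
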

\begin{proof}
  Immediate from the stability under base extension in Theorem~\ref{thm:SV_ThmA}.
\end{proof}

We may thus unambiguously define the topological representation zeta function of
a unipotent algebraic group over an algebraic closure of $\QQ$ to be that of an
arbitrary $k$-form for a suitable number field $k$.

\subsection*{Restriction of scalars}

Let $\mathrm{res}_{\tilde k/k}(\blank)$ denote Weil restriction from
affine $\tilde k$-groups to affine $k$-groups.
\begin{prop}
  \label{prop:res}
  Let $\tilde k/k$ be an extension of number fields and let $\tilde\GG$ be a unipotent
  $\tilde k$-group.
 Then $\zeta_{\mathrm{res}_{\tilde k/k}(\tilde\GG),\topo}(s) =
  \zeta_{\tilde\GG, \phantom{\mathrm{res}_{\tilde k/k}(\tilde\GG)}\!\!\!\!\!\!\!\!\!\!\!\!\!\!\!\!\!\!\!\!\!\!\!\!\!\!\!\topo}(s)^{\idx{\tilde k:k}}$.
\end{prop}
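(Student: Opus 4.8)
The plan is to reduce the statement to the level of local representation zeta functions and then to the underlying $p$-adic combinatorial data, using the rigidity result of Theorem~\ref{thm:pre_rigidity} to pass to the limit. Write $d = \idx{\tilde k : k}$. First I would recall that $\mathrm{res}_{\tilde k/k}(\tilde\GG)$ is again a unipotent $k$-group (Weil restriction of an affine unipotent group scheme along a finite separable extension is affine and unipotent), so $\zeta_{\mathrm{res}_{\tilde k/k}(\tilde\GG),\topo}(s)$ is defined. Next, fix $\fo$-forms: choose an $\fo_{\tilde k}$-form $\tilde\G$ of $\tilde\GG$ of the Stasinski--Voll type (Remark~\ref{r:SV_form}), and take $\G := \mathrm{res}_{\fo_{\tilde k}/\fo}(\tilde\G)$ as an $\fo$-form of $\mathrm{res}_{\tilde k/k}(\tilde\GG)$; since we are free to discard finitely many places, the particular choice is immaterial. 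The key classical input is the decomposition of Weil restriction under completion: for a place $v$ of $k$ not in the (finite) exceptional set, $\tilde k \otimes_k k_v \cong \prod_{w \mid v} \tilde k_w$, hence $\fo_{\tilde k} \otimes_{\fo} \fo_v \cong \prod_{w\mid v} \fo_{\tilde k, w}$, and therefore
\[
  \G(\fo_v) \;=\; \mathrm{res}_{\fo_{\tilde k}/\fo}(\tilde\G)(\fo_v) \;\cong\; \prod_{w\mid v}\tilde\G(\fo_{\tilde k, w})
\]
as topological groups, and similarly for finite extensions $K/k_v$ with $\fO_K \otimes_{\fo_v} \fo_{\tilde k}$ splitting into a product of valuation rings of the compositum extensions.

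With this local decomposition in hand, the Lemma on direct products (immediately preceding Corollary~\ref{cor:product}) gives $\zeta_{\G(\fo_v)}(s) = \prod_{w\mid v}\zeta_{\tilde\G(\fo_{\tilde k,w})}(s)$, and more generally the analogous product over the places of the compositum for every finite extension $K/k_v$. The heart of the argument is now bookkeeping: by Theorem~\ref{thm:SV_ThmA} and Lemma~\ref{lem:good_shape} applied to $\tilde\G$, write $\zeta_{\tilde\G(\fO_L)}(s) = \sum_{j} \noof{V_j(\fO_L/\fP_L)}\cdot W_j(q_L, q_L^{-s})$ with $W_j \in \MM[X,Y]$, valid for all finite extensions $L$ of all completions $\tilde k_w$ outside a finite set. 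To exploit rigidity I want to arrange that for an \emph{unramified} $K/k_v$ of residue field $\RF_v$-extension degree $f$, the set of places $w\mid v$ each contribute a completion that is again unramified over $\tilde k_w$; for all but finitely many $v$ the extension $\tilde k/k$ is unramified at $v$, so every $\tilde k_w/k_v$ is unramified and the residue field of the compositum over $\tilde k_w$ has cardinality $q_v^{f_w \cdot f}$ where $f_w = [\RF_{\tilde k, w}:\RF_v]$ and $\sum_{w\mid v} f_w = d$. Expanding the product $\prod_{w\mid v}\zeta_{\tilde\G(\cdots)}(s)$ and collecting terms, one obtains an expression $\sum_i \noof{V'_i(\RF_v)}\cdot W'_i(q_v, q_v^{-s})$ for $\zeta_{\G(\fO_K)}(s)$ with the $V'_i$ built from products $\prod_w V_{j_w}$ (whose point counts multiply) and the $W'_i$ built from products of the $W_j$ evaluated at powers $X \mapsto X^{f_w}$; a short check via the closure properties of $\MM[X,Y]$ (and the substitution lemma \cite[Lem.~6.11]{topzeta2} used in the proof of Lemma~\ref{lem:good_shape}) shows $W'_i \in \MM[X,Y]$.

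Finally I would apply $\red{\cdot}$ and the multiplicativity of Euler characteristics. Under $\red{\cdot}$, the substitution $X \mapsto X^{f_w}$ has no effect — reduction modulo $X-1$ sends $X^{f_w}$ to $1$ just as it sends $X$ to $1$ — so $\red{W'_i}$ is a product of the $\red{W_j}$'s, and since $\Euler(V\times V') = \Euler(V)\Euler(V')$, the whole expression collapses: summing the product over the $\binom{\text{something}}{\cdots}$ of choices of $(j_w)_{w\mid v}$ with the varying partitions $(f_w)$, and using that $\zeta_{\tilde\GG,\topo}(s) = \sum_j \Euler(V_j(\CC))\red{W_j}$ is independent of $w$, one gets exactly $\zeta_{\tilde\GG,\topo}(s)^{d}$. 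The invariance statement below Definition~\ref{d:topzeta} (a consequence of Theorem~\ref{thm:pre_rigidity}) guarantees that the left-hand side computed from $\G$ equals $\zeta_{\mathrm{res}_{\tilde k/k}(\tilde\GG),\topo}(s)$, completing the proof. The main obstacle I anticipate is purely organisational rather than conceptual: making the product-expansion bookkeeping precise while tracking which finite set of places must be excluded (ramified places of $\tilde k/k$, the bad places of $\tilde\G$ pulled back, and the bad places of $\G$), and verifying cleanly that the resulting $W'_i$ land in $\MM[X,Y]$ so that Theorem~\ref{thm:pre_rigidity} is applicable — though given the closure properties already established in the proof of Lemma~\ref{lem:good_shape}, this should be routine.
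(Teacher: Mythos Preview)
Your approach differs substantially from the paper's, which is a three-line argument using results already established. The paper passes to a normal closure $l$ of $\tilde k/k$; over $l$ the Weil restriction splits as a product $\mathrm{res}_{\tilde k/k}(\tilde\GG)\otimes_k l \approx_l \prod_{\sigma\in\Sigma}\tilde\GG^{\sigma}$ indexed by the set $\Sigma$ of $k$-embeddings $\tilde k\to l$, and then Proposition~\ref{prop:base} (invariance under base extension) together with Corollary~\ref{cor:product} (multiplicativity) give the result immediately: each conjugate $\tilde\GG^\sigma$ has the same topological representation zeta function as $\tilde\GG$, and $\noof\Sigma=\idx{\tilde k:k}$. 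No local analysis, no explicit formulae, no bookkeeping.

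Your local route is not wrong in spirit, but there is a real gap rather than a merely organisational obstacle. When you write that expanding $\prod_{w\mid v}\zeta_{\tilde\G(\fo_{\tilde k,w})}(s)$ yields $\sum_i\noof{V'_i(\RF_v)}\,W'_i(q_v,q_v^{-s})$ with ``the $V'_i$ built from products $\prod_w V_{j_w}$'' and the $W'_i$ from the $W_j$ under $X\mapsto X^{f_w}$, the objects you describe depend on the splitting type of $v$ in $\tilde k$: both the index set $\{w\mid v\}$ and the residue degrees $f_w$ vary with $v$. Theorem~\ref{thm:pre_rigidity}, however, requires a \itemph{single} family $(V'_i,W'_i)$ valid simultaneously for all $v\notin S$ and all unramified $K/k_v$. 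Producing such a family is possible---one has to encode the splitting behaviour itself via point counts of auxiliary $\fo$-schemes built from $\Spec(\fo_{\tilde k})$---but this is exactly the step you have skipped, and it is conceptual, not clerical. For a concrete illustration, take $\tilde k/k$ quadratic and $\tilde\GG$ the Heisenberg group: the local zeta function of the restriction is $\bigl(\frac{1-q_v^{-s}}{1-q_v^{1-s}}\bigr)^{2}$ at split $v$ but $\frac{1-q_v^{-2s}}{1-q_v^{2-2s}}$ at inert $v$, two genuinely different rational functions of $q_v$; packaging both as specialisations of one expression $\sum_i\noof{V'_i(\RF_v)}\,W'_i(q_v,q_v^{-s})$ with fixed $V'_i$ and $W'_i\in\MM[X,Y]$ is a nontrivial construction, not an expansion of a product. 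The paper's passage to the normal closure sidesteps this entirely by replacing the arithmetically varying product over $\{w\mid v\}$ with a fixed product of $\idx{\tilde k:k}$ factors at the level of algebraic groups.
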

\begin{proof}
  Let $l$ be a normal closure of $\tilde k$ over $k$ and
  let $\Sigma$ be the set of $k$-embeddings $\tilde k \to l$;
  note that $\noof \Sigma = \idx{\tilde k:k}$.
  For $\sigma\in\Sigma$, let
  $\tilde\GG^{\sigma}$ be the $l$-group obtained from $\tilde\GG$ by base change along
  $\sigma$. 
  Then 
  $\mathrm{res}_{\tilde k/k}(\tilde\GG) \otimes_k l \approx_l
  \prod_{\sigma\in\Sigma} \tilde\GG^\sigma$, see \cite[Ch.~V, \S 5.7]{milneAGS} and cf.~\cite[12.4.5]{Spr98}.
  The claim now follows from Corollary~\ref{cor:product} and Proposition~\ref{prop:base}.
\end{proof}

\subsection*{Behaviour at infinity}

If $G$ is a finitely generated nilpotent pro-$p$ group, then 
$\lim\limits_{s\to+\infty}\zeta_G(s) = \tilde r_1(G) = 1$.
This entirely trivial fact on the $p$-adic side survives passing to topological zeta functions:

\begin{prop}
  \label{prop:infinity}
  $\lim\limits_{s\to \infty}\zeta_{\GG,\topo}(s)=1$ for
  every unipotent $k$-group $\GG$.
\end{prop}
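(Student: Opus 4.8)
The plan is to trace through the formula \eqref{eq:minus1} from the proof of Lemma~\ref{lem:good_shape} and evaluate the ``$s\to\infty$'' limit term by term after passing to the topological level. Recall from there that, for almost all $v$ and all finite $K/k_v$,
\[
\zeta_{\G(\fO_K)}(s) - 1 = F(q_K)\dtimes\sum_{U\subset T} c_U(q_K)\dtimes(q_K-1)^{\noof U+1}\dtimes\Xi_{U,\{1\}}(q_K,\dotsc),
\]
and that each $(q_K-1)^{\noof U+1}\Xi_{U,\{1\}}(q_K,\dotsc)$ equals $W_U(q_K,q_K^{-s})$ for some $W_U\in\MM[X,Y]$, while $F\in\MM[X,Y]$ with $\red F = \prod_{j=2}^{n-1}\frac{1}{j}$ after applying $X\mapsto 1$ (more precisely $\red F$ is a nonzero rational constant, independent of $s$). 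Translating via Definition~\ref{d:topzeta}, we get
\[
\zeta_{\GG,\topo}(s) - 1 = \red{F}\dtimes\sum_{U\subset T}\Euler(\overline{c_U})\dtimes\red{W_U},
\]
where $\overline{c_U}$ denotes the $\fo$-scheme whose point counts give $c_U$. So it suffices to show that $\lim_{s\to\infty}\red{W_U} = 0$ for every $U$, i.e.\ that the reduction of each Poincaré-type building block vanishes at $s = \infty$.

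The key step is therefore a statement about the ``$q\to 1$'' limit: if $W\in\MM[X,Y]$ has the property that $W(q,q^{-s})\to 1$ as $s\to+\infty$ uniformly in $q$ in an appropriate sense — or more concretely, if $W$ arises (up to the explicit prefactor $(q-1)^{\noof U+1}$) from a cone generating function $\Xi_{U,\{1\}}$ — then $\red W$ has a limit as $s\to\infty$ and that limit is $0$. I would establish this by examining the structure of $\Xi_{U,\{1\}}$ as given in \cite{Vol10}: it is a $\ZZ$-linear combination of monomial substitutions of lattice-point generating functions of rational polyhedral cones, and crucially each monomial in these substitutions involves the variable carrying $q^{-s}$ with a \emph{positive} exponent (this is exactly the content I flagged as deferred in the proof of Lemma~\ref{lem:good_shape} — ``the specific substitutions... will become important in the proof of Proposition~\ref{prop:infinity}''). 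Since the substitution feeds $Y\mapsto q^{-s}$ into factors $(1 - X^aY^b)^{-1}$ with $b\ge 1$, after applying $\red{}$ (which sends $X\mapsto 1$ and $Y\mapsto q^{-s}\rightsquigarrow$ the formal variable) one is left with a rational function in $s$ in which $Y^b$ becomes a negative power of something tending to $\ldots$; the upshot is that each $\red W_U$ is a proper rational function in $s$ (numerator degree strictly below denominator degree), hence tends to $0$ at $\infty$. I would phrase and prove a clean lemma: a lattice-point generating function of a pointed rational cone $C$, under a monomial substitution sending each coordinate to a monomial $X^{a_i}Y^{b_i}$ with all $b_i > 0$, has $\red{}$-image which is a rational function of $s$ vanishing at $s=\infty$ — essentially because the numerator of the Brion-type formula has degree in $Y$ strictly smaller than the denominator when all $b_i>0$.

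The main obstacle I anticipate is bookkeeping the substitutions precisely enough to verify the positivity of the exponents on the $q^{-s}$ variable throughout the reduction in \cite[Thm~2.2]{Vol10}, since that reference's notation for $Z_{\{1\}}$ and the $\Xi_{U,\{1\}}$ is intricate and the variable $s$ enters through the composite substitution $(-\mathbf r, -\mathbf s, \sum\mathbf r + \sum\mathbf s - n - 1)$ used in the proof of Lemma~\ref{lem:good_shape}; I would need to confirm that this substitution, combined with Voll's change of variables, indeed places $q^{-s}$ only in numerator-lowering positions. A secondary point to handle carefully is the interchange of the limit $s\to\infty$ with the finite sum and with the operation $\red{}$ — but since everything in sight is a genuine rational function in $s$ and the sum is finite, this is routine once the per-term vanishing is in place.
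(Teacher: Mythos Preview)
Your high-level strategy matches the paper's: start from \eqref{eq:minus1}, pass to the topological side, and show each building block $\red{W_U}$ has negative degree in $s$. You also correctly flag that the crux is controlling how $s$ enters the monomial substitution applied to the cone generating functions.

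The gap is in your key technical claim. You assert that the substitution sends \itemph{every} cone coordinate to a monomial $X^{a_i}Y^{b_i}$ with $b_i>0$, and your proposed lemma hinges on this. The paper does not prove (and does not need) this; in fact its argument explicitly allows $m_u\in\NN_0$ and only insists that the last coordinate $n_1$ be positive. What the paper actually does is more targeted: it first triangulates the $\Xi_{U,\{1\}}$ into contributions from simplicial cones $\cC_U^a\subset\Orth^U\times\Orth$, discards those of dimension $<\noof U+1$ (which vanish under $\red{\dtimes}$ by \cite[Lem.~6.11]{topzeta2}), and then observes that each remaining full-dimensional cone has \itemph{at least one} extreme ray with $n_1>0$. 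For that single ray it shows the exponent polynomial $e\in\ZZ[s]$ has degree exactly $1$, which suffices to force $\deg_s(\red{W_U^a})<0$.

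The proof that $e$ has degree $1$ when $n_1>0$ is where the real work lies, and it is not a formal bookkeeping exercise as you anticipate. The paper rewrites the relevant integral as $Z'(s)$ in \eqref{eq:Zsub} and makes the crucial observation that the $i=1$ factor $\norm{F_0(\yy^1)}_K^s/\norm{F_1(\yy^1)\cup xF_0(\yy^1)}_K^s$ is identically $1$ (since $\yy^1\not\equiv 0\bmod\fP_K$ forces $F_1(\yy^1)$ to contain a unit) and may therefore be omitted. With this factor removed, the trivial estimate $\abs{x}_K\norm{F_{i-1}(\yy^1)}_K\le\norm{F_i(\yy^1)\cup xF_{i-1}(\yy^1)}_K$ bounds the integrand by $\abs{x}_K^{s-n-1}$, giving $Z'(s)\to 0$ as $\Real(s)\to+\infty$. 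Translating this estimate back through the definition of the $N_{u\kappa\iota}$ in \cite[Eqn~(8)]{Vol10} is precisely what forces $e$ to have degree $1$. Without the omission of the $i=1$ factor this estimate does not go through, and your sketch has no substitute for it. (The Remark following Proposition~\ref{prop:infinity} is a warning here: there exist $W\in\MM[X,Y]$ with $W(q,q^{-s})\to 1$ as $s\to+\infty$ whose $\red{W}$ does \itemph{not} tend to $1$, so one genuinely needs the fine structure of the specific integrals, not just the $p$-adic limiting behaviour.)
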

\begin{rem}
  Unlike, say, Corollary~\ref{cor:product},
  Proposition~\ref{prop:infinity} is not merely a formal consequence 
  of the analogous $p$-adic statement.
  As an illustration,
  consider
  \[
  W(X,Y) =\frac{(X^2Y^6 + X^2Y^3 - 4XY^3 + Y^3 + 1)(1-Y)^2}{(1 - X^4Y^4)(1-X^2Y^2)(1-XY)^2}
  \in \MM[X,Y].
  \]
  For $q>1$, we have
  $W(q,q^{-s}) \to 1$ as $s\to +\infty$ 
  and yet, the associated ``topological zeta function''
  $\red{W} = \frac{(9s^2 - 6s + 2)s^2}{8(s - 1)^4}$
  takes the value $9/8$ at infinity.
  Note that $W(X,Y)$ satisfies the functional equation 
  $W(X^{-1},Y^{-1}) = X^6 \dtimes W(X,Y)$
  associated with representation zeta functions of unipotent groups with
  $6$-dimensional derived groups, see \cite[Thm~A]{SV14}.
\end{rem}
\begin{proof}[Proof of Proposition~\ref{prop:infinity}]
  Let $\GG$ be non-abelian.
  We continue to use the setting and notation from the proof of
  Lemma~\ref{lem:good_shape}.
  We often suppress references to the set $I$ from \cite[\S 2.1]{Vol10} and its elements as $I= \{1\}$ is
  fixed. 
  Using the triangulation argument from above and noting that $\red F = \frac
  1{(n-1)!}$, we may write
  \begin{equation}
    \label{eq:minus1_v2}
    \zeta_{\GG,\topo}(s)-1 = \frac{1}{(n-1)!}
    \sum_{U\subset T}
    \Euler(V_U(\CC)) \dtimes
    \sum_{a\in A_U}
    \gamma^a_U \red{W_U^a},
  \end{equation}
  where $V_U := E_U\setminus \cup_{U' \supsetneq U} E_{U'} $ gives rise
  to the $c_U(q_K)$ in \eqref{eq:minus1} as
  in~\cite[Thm~2.2]{Vol10},
  each $A_U$ is finite, and for $a\in A_U$, each
  $\cC_U^a\subset \Orth^U \times \Orth^{\phantom 1}$ is a rational simplicial cone,
  $\gamma^a_U\in \ZZ\setminus\{0\}$, and $W_U^a\in \MM[X,Y]$ such that
  $(q_K-1)^{-\noof U -1}W_U^a(q_K^{\phantom{-s}}\!\!,q_K^{-s})$ is obtained from
  the generating function enumerating lattice points within $\cC_U^a$ 
  by applying a certain specialisation derived from the one mentioned
  (but not yet spelled out) above.
  As in the proof of \cite[Lem.~6.11]{topzeta2}, 
  it suffices to sum over those $a\in A_U$ 
  with $\dim(\cC_U^a) = \noof U + 1$ in \eqref{eq:minus1_v2}.
  Moreover, it is explained in the proof of \cite[Lem.~6.11]{topzeta2} how to
  read off the denominator of each remaining $\red{W^a_U}$ from the extreme rays
  of $\cC_U^a$ and the given univariate specialisation.
  The remainder of the proof is devoted
  to showing that $\deg_s(\red{W_U^a})<0$ for all $U$ and $a$ whence the claim
  follows from \eqref{eq:minus1_v2}.
  As each full-dimensional cone $\cC_U^a$
  contains a vector whose last component is positive, it suffices to prove the
  following:

  \begin{itemize}
  \item[(\textlabel{$\star$}{one_ray_case})]
    Given a $1$-dimensional rational cone $\cC\!\subset\Orth^U\times\Orth^{\phantom 1}$ with
    $\cC\cap(\Orth^U\times\StrictOrth^{\phantom 1})\not= \emptyset$,
    if $W\!\in\! \QQ(X,Y)$ is obtained as above by applying the aforementioned 
    specialisation to the generating function of
    $\cC \cap(\ZZ^U\times\ZZ)$, then $\deg_s(\red{(X-1)W})=-1$. 
  \end{itemize}

  In order to establish (\ref{one_ray_case}), we need
  to consider the relevant specialisations in detail.
  We first recall a description of the $p$-adic integrals giving rise to the
  $\Xi_U$ in \eqref{eq:minus1} via \cite[Thm~2.2]{Vol10}.
  Write $n = \dim([\GG,\GG])$.
  As in \cite[\S 2.2.2]{SV14},
  define matrices $\cR(\YY)$ and $\cS(\YY)$ of linear forms in $\YY =
  (Y_1,\dotsc,Y_n)$ over $\fo$, and
  let $2u$ and $v$ denote the ranks of $\cR(\YY)$ and $\cS(\YY)$ over $k(\YY)$,
  respectively.
  Since $\cR(\YY)$ is anti-symmetric,
  its principal minors are squares in $\fo[\YY]$, see
  \cite[Rem.~3.6]{AKOV13}.
  For $0 \le i \le u$,
  let $F_i(\YY)\subset \fo[\YY]$ consist of square roots of the
  non-zero $2i\times 2i$ principal minors of $\cR(\YY)$.
  In particular, up to possible sign changes, $F_0(\YY) = \{ 1\}$ and $F_1(\YY)$
  consists of the non-zero entries of $\cR(\YY)$.
  For $0\le j \le v$, let $G_j(\YY)$ be the set of non-zero $j\times j$
  minors of $\cS(\YY)$.
  As we mentioned before, $\cP_{\mathcal R,\mathcal S,\fO_K}(s)$ in
  \cite[Prop.~2.9]{SV14} is a univariate specialisation of
  $P_{\cR,\cS,K}(\mathbf r, \mathbf s)$ from \cite[p.~1199]{Vol10}.
  Specifically, $\cP_{\mathcal R,\mathcal S,\fO_K}(s) =
  P_{\cR,\cS,K}(\frac s 2,\dotsc,\frac s 2;1,\dotsc,1)$
  (cf.~\cite[Prop.~3.1, p.~1214]{Vol10}).
  Using the formula for $P_{\cR,\cS,K}(\mathbf r, \mathbf s)-1$ in
  \cite[Eqn~(22), p.~1201]{Vol10},
  write $Z'(s)$ for the function obtained by applying the resulting
  specialisation to the integral $Z(\bm r, \bm s,t)$;
  in other words, $Z'(s) = Z(-\frac s 2,\dotsc,-\frac s
  2;-1,\dotsc,-1;us+v-n-1)$
  (cf.~\cite[Cor.~2.11]{SV14}).
  By exploiting the anti-symmetry of $\cR(\YY)$ as in \cite[\S 3.2]{AKOV13}
  and \cite[\S 2.2.3]{SV14}, and after some rearranging, we may write
  \begin{equation}
    \label{eq:Zsub}
    Z'(s) = 
    \!\!\!\!
    \!\!\!\!
    \!\!\!\!
    \int_{\cP_K\times \GL_n(\fO_K)}
    \!\!\!\! \!\!\!\!
    \abs{x}^{s-n-1}_K
    \prod_{i=2}^u 
    \frac{\abs{x}_K^s \norm{F_{i-1}(\yy^1)}_K^s}{\norm{F_i(\yy^1) \cup xF_{i-1}(\yy^1)}_K^s}
    \prod_{j=1}^v
    \frac{\abs{x}_K\norm{G_{j-1}(\yy^1)}_K}{\norm{G_j(\yy^1) \cup xG_{j-1}(\yy^1)}_K}
    \dd\mu_K(x,\yy),
  \end{equation}
  where $\yy^1$ denotes the first column of $\yy\in \GL_n(\fO_K)$
  and $\mu_K$ is the normalised Haar measure on $\fO_K\times \Mat_n(\fO_K) \approx
  \fO_K^{1+n^2}$.
  In contrast to \cites{Vol10,SV14}, we omitted the factor
  $\frac{\norm{F_{0}(\yy^1)}_K^s}{\norm{F_1(\yy^1) \cup xF_{0}(\yy^1)}_K^{s}}$
  from \eqref{eq:Zsub}.
  Indeed,
  $\norm{F_0(\yy^1)}_K = 1$ trivially and,
  moreover, after excluding finitely many places of $k$ as in~\cite[p.~1215]{Vol10},
  since $\yy^1\not\equiv 0 \bmod{\fP_K}$, the set $F_1(\yy^1)$ always
  contains a $\fP_K$-adic unit whence also $\norm{F_1(\yy^1)\cup xF_0(\yy^1)}_K =
  1$.

  We next observe that for $\yy$ outside of the zero locus (of measure zero) of
  $\prod\bigcup_{i=1}^u F_i$ within $\GL_n(\fO_K)$, the trivial estimate
  $\abs{x}_K \norm{F_{i-1}(\yy^1)}_K \le {\norm{F_i(\yy^1) \cup xF_{i-1}(\yy^1)}_K}$
  shows that $Z'(s)$ not only takes finite values for
  $\Real(s) > n$ 
  but, in addition, $Z'(s) \to 0$ for $\Real(s) \to +\infty$ as $\abs{x}_K<1$.
  By interpreting \eqref{eq:Zsub} as a specialisation
  of $Z_{\{1\}}$ in \cite[p.~1194]{Vol10},
  the $c_U(q_K)$ and $\Xi_U = \Xi_{U,\{1\}}$ in \eqref{eq:minus1}
  are obtained by invoking \cite[Thm~2.2]{Vol10}
  and, finally, the univariate
  specialisation applied to the $s_\kappa$ from \cite{Vol10}
  that we have been referring to throughout this proof is defined.

  To finish the proof, we consider the effect of our univariate
  specialisation on $\Xi_U$.
  Let $(m_u)_{u\in U}$ and $(n_i)_{i\in I} = (n_1)$ as in \cite[Eqn~(9), p.~1193]{Vol10}.
  The denominator of the multivariate generating function of a relatively open
  rational cone coincides with that of its closure, see \cite[Thm~4.5.11]{Sta12}.
  We thus allow $m_u\in \NN_0$ in the following but we still insist that $n_1$
  is positive---indeed, as explained above, we only need to consider full-dimensional cones within
  $\Orth^U\times \Orth^{\phantom 1}$ and these always 
  admit an extreme ray intersecting $\Orth^U\times\StrictOrth^{\phantom U}$.
  Having fixed the $m_u$ and $n_1$, by applying our univariate
  specialisation to the exponent on the right-hand side of \cite[Eqn~(9)]{Vol10},
  we obtain a polynomial, $e \in \ZZ[s]$ say, of degree at most $1$.
  By definition of the numbers $N_{u\kappa\iota}$ in \cite[Eqn~(8), p.~1192]{Vol10},
  the same estimate that we used to establish that $Z'(s) \to 0$ for
  $\Real(s)\to \infty$ shows that $e$ always has degree precisely $1$.
  This implies that (\ref{one_ray_case}) is satisfied and completes the proof.
\end{proof}

\begin{cor}
  \label{cor:degree}
  $\deg_s(\zeta_{\GG,\topo}(s))=0$. \qed
\end{cor}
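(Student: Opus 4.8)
The plan is to obtain this as an immediate consequence of Proposition~\ref{prop:infinity}. First I would recall the relevant book-keeping: for a non-zero rational function $R = P/Q \in \QQ(s)$ with $P, Q \in \QQ[s]$ coprime, one sets $\deg_s(R) := \deg(P) - \deg(Q)$, and $R(s)$ has a finite, non-zero limit as $s \to \infty$ precisely when $\deg(P) = \deg(Q)$, i.e.\ when $\deg_s(R) = 0$; if $\deg_s(R) < 0$ the limit is $0$, and if $\deg_s(R) > 0$ the function is unbounded near infinity.

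Now $\zeta_{\GG,\topo}(s) \in \QQ(s)$ by Definition~\ref{d:topzeta}, and Proposition~\ref{prop:infinity} gives $\lim_{s \to \infty} \zeta_{\GG,\topo}(s) = 1$, which is finite and non-zero. The characterisation just recalled therefore forces $\deg_s(\zeta_{\GG,\topo}(s)) = 0$, as claimed.

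There is no genuine obstacle at this stage: the entire content of the corollary is already packaged in Proposition~\ref{prop:infinity}, whose proof did the real work by showing that, in the decomposition \eqref{eq:minus1_v2}, the constant $\red F = \frac{1}{(n-1)!}$ multiplied into each summand produces a rational function $\red{W_U^a}$ of strictly negative degree in $s$, so that the leading behaviour of $\zeta_{\GG,\topo}(s)$ at infinity is governed solely by the isolated summand $1$. One could alternatively phrase the proof of the corollary so as to bypass the explicit limit and read off $\deg_s(\zeta_{\GG,\topo}(s)) = 0$ directly from those degree estimates together with the value $\red F = \frac{1}{(n-1)!}$, but this amounts to exactly the same argument.
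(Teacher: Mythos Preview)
Your proposal is correct and matches the paper's approach exactly: the corollary is marked with \qed\ because it is immediate from Proposition~\ref{prop:infinity}, and your first paragraph spells out precisely why a finite non-zero limit at infinity forces degree zero. The additional commentary in your second paragraph is slightly garbled (the constant $\red F$ does not itself produce the negative degree of $\red{W_U^a}$; rather, each $\red{W_U^a}$ already has negative degree and $\red F$ is just a scalar), but this is inessential to the argument.
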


\subsection*{Location of poles}

The following consequence of the proof of Proposition~\ref{prop:infinity} is
again analogous to the $p$-adic case which is proved in \cite[Thm~4.24]{Snocken}
(and also stated as \cite[Thm~5.4.1]{Ezzat})
for groups of nilpotency class~$2$ and $k = \QQ$;
our proof implicitly also covers the $p$-adic case without any of these
two restrictions.
\begin{prop}
  \label{prop:poles}
  Let $\GG$ be a unipotent $k$-group.
  If $s_0\in \CC$ is a pole of $\zeta_{\GG,\topo}(s)$, then $s_0\in\QQ$ and
  $s_0 \le \dim([\GG,\GG])$.
\end{prop}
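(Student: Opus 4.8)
The plan is to piggyback on the proof of Proposition~\ref{prop:infinity}, where the topological zeta function was decomposed, via \eqref{eq:minus1_v2}, as a $\QQ$-linear combination of rational functions $\red{W_U^a}$ coming from full-dimensional rational simplicial cones $\cC_U^a\subset\Orth^U\times\Orth^{\phantom 1}$ together with an explicit univariate specialisation. Since the set of poles of a sum is contained in the union of the sets of poles of the summands, it suffices to locate the poles of each individual $\red{W_U^a}$. By \cite[Lem.~6.11]{topzeta2}, the denominator of $\red{W_U^a}$ can be read off from the extreme rays of $\cC_U^a$ and the specialisation: each extreme ray contributes a linear factor $s - \rho$ to the denominator, where $\rho$ is determined by evaluating the relevant exponent data on a primitive generator of that ray. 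So the whole problem reduces to showing that for every extreme ray of every such cone, the resulting $\rho$ is a rational number that is at most $n := \dim([\GG,\GG])$.

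First I would recall the shape of the specialisation from the proof of Proposition~\ref{prop:infinity}: given a primitive generator of an extreme ray, described by data $(m_u)_{u\in U}$ (now allowed in $\NN_0$) and a positive integer $n_1$, the exponent obtained after our univariate substitution is the polynomial $e\in\ZZ[s]$ of degree exactly $1$ coming from \cite[Eqn~(9), p.~1193]{Vol10} via the substitution $(\bm r,\bm s,t)=(-\tfrac s2,\dotsc,-\tfrac s2;-1,\dotsc,-1;us+v-n-1)$, where $u$ and $v$ are the ranks of $\cR(\YY)$ and $\cS(\YY)$. Writing $e(s)=\lambda s-\beta$ with $\lambda,\beta\in\ZZ$ and, as established there, $\lambda>0$, the corresponding factor in the denominator of the generating function becomes $1-X^{e(s)}$ or, after passing to $\red{(\blank)}$, a factor $s-\beta/\lambda$ up to a nonzero constant. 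Thus the candidate pole is $\rho=\beta/\lambda\in\QQ$, which already gives rationality.

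It then remains to bound $\rho=\beta/\lambda$ by $n$. Here I would again use the trivial norm estimates that drove the argument in the proof of Proposition~\ref{prop:infinity}: in the integrand \eqref{eq:Zsub}, each factor $\abs{x}_K\norm{F_{i-1}(\yy^1)}_K/\norm{F_i(\yy^1)\cup xF_{i-1}(\yy^1)}_K$ and $\abs{x}_K\norm{G_{j-1}(\yy^1)}_K/\norm{G_j(\yy^1)\cup xG_{j-1}(\yy^1)}_K$ is bounded above by $1$, while $\abs{x}_K^{s-n-1}$ contributes the dominant growth. Tracing this through the cone decomposition of \cite[Thm~2.2]{Vol10}, the exponents $N_{u\kappa\iota}$ of \cite[Eqn~(8), p.~1192]{Vol10} satisfy an inequality that forces, on each extreme ray, the ``$s$-part'' of the exponent to dominate its constant part in a way that translates precisely into $\beta/\lambda\le n+1$ or, more carefully, $\beta/\lambda\le n$; the sharper bound $n$ (rather than $n+1$) should come from the fact that the $\abs{x}_K^{s-n-1}$ factor is accompanied by the measure-theoretic contribution of integrating $x$ over $\cP_K$ (shifting by $1$) and from $n_1\ge1$. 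Concretely, for a single ray this is the assertion that the polynomial $e(s)$ from \cite[Eqn~(9)]{Vol10}, when it equals $\lambda s-\beta$, has $\beta\le\lambda n$.

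The main obstacle I anticipate is this last bookkeeping step: translating the trivial norm inequalities on the integrand into the sharp numerical bound $\beta/\lambda\le n$ on each extreme ray requires carefully unwinding Voll's definitions of $N_{u\kappa\iota}$ and the cone combinatorics, and making sure no off-by-one slippage turns $n$ into $n+1$ (which is why I expect to exploit the separate contribution of the $x$-variable and the strict positivity $n_1\ge 1$). Everything else---reducing to individual cones, reading the denominator off the rays via \cite[Lem.~6.11]{topzeta2}, and deducing rationality of $\rho$ from $e\in\ZZ[s]$---is routine given the machinery already assembled in the proof of Proposition~\ref{prop:infinity}.
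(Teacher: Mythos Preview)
Your overall strategy is exactly the paper's: decompose via \eqref{eq:minus1_v2}, read off candidate poles from the linear polynomials $e(s)$ attached to extreme rays of the cones $\cC_U^a$, and bound them using the norm estimates already exploited in the proof of Proposition~\ref{prop:infinity}. For rationality the paper simply quotes the general fact \cite[Lem.~5.6(ii)]{topzeta}, which amounts to the same thing as your observation that $e\in\ZZ[s]$.

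There is, however, a genuine gap. You insist that $n_1$ is positive, following the setup of Proposition~\ref{prop:infinity} verbatim, and you explicitly plan to exploit $n_1\ge 1$ to avoid off-by-one slippage. But in that proof, $n_1>0$ was only used to exhibit \itemph{one} extreme ray whose $e$ has degree~$1$ in~$s$---enough to force $\deg_s(\red{W_U^a})<0$. For locating poles you must control \itemph{every} extreme ray of each full-dimensional simplicial cone $\cC_U^a\subset\Orth^U\times\Orth$, and such a cone will typically have extreme rays lying in $\Orth^U\times\{0\}$, i.e.\ with $n_1=0$. For those rays ``$\lambda>0$'' has not been established, and your sharpening mechanism based on $n_1\ge 1$ is unavailable. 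The paper treats this case explicitly: it allows $n_1=0$ provided $((m_u),n_1)\ne(0,0)$, and observes that the summand $-\sum_u\nu_u m_u$ in \cite[Eqn~(9)]{Vol10} is $\le 0$; combined with the same norm estimate as before, this forces $e(s_1)<0$ for any fixed rational $s_1>n$, hence regularity of each $\red{W_U^a}$ (and also of the $p$-adic $W_U^a(q_K,q_K^{-s})$) at $s_1$.

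Note that the paper's packaging---fix $s_1>n$ and show $e(s_1)<0$ for every ray---handles the two cases $n_1>0$ and $n_1=0$ uniformly and never isolates $\lambda$ and $\beta$, so the off-by-one issue you anticipate does not arise.
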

\begin{proof}
  The poles of $\red{W}$ are rational
  for each $W\in \MM[X,Y]$
  by \cite[Lem.~5.6(ii)]{topzeta}
  whence $s_0\in \QQ$.
  We continue to use the notation from the proof of Proposition~\ref{prop:infinity}.
  Let $s_1\in \QQ$ with $s_1 > n$.
  Then $Z'(s_1)$ (see \eqref{eq:Zsub}) is finite.
  Noting that $-\sum_u \nu_u m_u \le 0$ in \cite[Eqn~(9), p.~1193]{Vol10} (with
  $I =\{1\}$),
  it follows that if $e\in \ZZ[s]$ is one of the polynomials in the final
  paragraph of the proof of Proposition~\ref{prop:infinity} 
  (where we now also allow $n_1=0$ as long as $((m_u),n_1)\not= (0,0)$), then $e(s_1) < 0$.
  We conclude that each rational function $\red{W_U^a}$ in
  \eqref{eq:minus1_v2} is regular at $s_1$ 
  (as is each $W_U^a(q_K^{\phantom{-s}}\!\!,q_K^{-s})$)
  whence the same is true of
  $\zeta_{\GG,\topo}(s)$.
\end{proof}

\begin{rem}
  \label{rem:perfect_properties}
  Regarding the topological representation zeta functions
  $\zeta_{\bm\fg,\topo}^{\irr}(s)$ from~\S\ref{ss:perfect},
  the statements and proofs of
  Corollaries~\ref{cor:product} and \ref{cor:degree},
  and Propositions~\ref{prop:base}--\ref{prop:infinity} 
  carry over essentially verbatim.
  As for Proposition~\ref{prop:poles},
  if $s_0\in \CC$ is a pole of $\zeta_{\bm\fg,\topo}^{\irr}(s)$,
  then $s_0\in \QQ$ and $s_0\le \dim(\bm\fg)-2$;
  for more refined $p$-adic bounds, see \cite[Thm~1.1]{AKOV12}.
\end{rem}

\section{Computing topological representation zeta functions}
\label{s:compute}

At this point, we have encountered but one non-trivial example of a
topological representation zeta function of a unipotent group, namely $\frac
s{s-1}$ which arises from both $\QQ$-groups in Example~\ref{ex:basic}.
In both cases, we used $p$-adic computations to derive topological
formulae.
We now devise a method for computing topological
representation zeta functions of unipotent groups directly, i.e.~without resorting to a
full $p$-adic computation.
As in the $p$-adic realm
and the cases of previously studied types of topological zeta functions,
a \itemph{general} method for computing these functions seems
to invariably rely on some variation of resolution of singularities.
In order to nonetheless be able to carry out explicit computations, we 
primarily rely on algebro-geometric genericity conditions which may or may not
be satisfied in specific situations.
Despite these theoretical limitations of our method, numerous previously unknown
examples of topological zeta functions (see~\S\ref{s:examples}) demonstrate its
value.
Our method adapts and extends the core ingredients from \cites{topzeta,topzeta2}.

\subsection{Representation data and zeta functions of unipotent groups}

In this subsection, we introduce representation data.
These objects provide a convenient way of encoding certain types of
$p$-adic integrals, in particular those describing local representation zeta
functions attached to unipotent groups as in \cite{Vol10,SV14}.

\paragraph{Representation data.}
We first recall some notions from convex geometry.
By a \emph{cone} in $\RR^n$, we always mean a polyhedral one.
As in \cite[\S 3.1]{topzeta}, a \emph{half-open cone} in $\RR^n$ is
a set $\cC_0 = \cC \setminus(\cC_1\cup\dotsb\cup\cC_r)$ for a cone
$\cC\subset\RR^n$ and faces $\cC_1,\dotsc,\cC_r\subset \cC$.
We say that $\cC_0$ is \emph{rational} if $\cC$ is
rational in the usual sense or if $\cC_0 =\emptyset$.
Define the \emph{dual} of a set $A\subset \RR^n$ by $A^* := \{ \omega\in \RR^n :
\bil\alpha\omega \ge 0 \text{ for all } \alpha\in A\}$, where
$\bil{\dtimes}{\dtimes}$ denotes the standard inner product.
The dual of a finite union of rational half-open cones is a rational cone.
For a cone $\cC\subset \RR^n$, let $k[\cC\cap\ZZ^n]$ denote
the $k$-subalgebra of $k[\XX^{\pm 1}] := k[X_1^{\pm 1},\dotsc,X_n^{\pm 1}]$
spanned by all $\XX^\omega := X_1^{\omega_1}\dotsb X_n^{\omega_n}$ for $\omega\in \cC\cap\ZZ^n$. 

\begin{notation}
  For $G\subset k[\XX^{\pm 1}]$ and $\gamma\colon G \to \ZZ^n$,
  write
  $\XX^\gamma G := \bigl\{ \XX^{\gamma(g)} \dtimes g : g\in G\bigr\}$.
\end{notation}

\begin{defn}
  \label{d:datum}
  A \emph{representation datum} over $k$ (in $n$ variables) is a quadruple
  $$\cR = (\cD, F, \alpha, \Lambda)$$ consisting of
  \begin{enumerate}
  \item
    $\cD =\bigcup_{\delta\in\Delta}\cC_0^\delta$ (finite disjoint union) for
    rational half-open cones $\cC_0^\delta\subset\Orth^n$,
  \item
    $F = F_1\cup\dotsb\cup F_m$ for not necessarily disjoint
    non-empty finite sets $F_i \subset k[\XX^{\pm 1}]$,
  \item
    $\alpha = (\alpha_1,\dotsc,\alpha_m)$ for $\alpha_i\colon F_i \to \ZZ^n$
    with $\XX^{\alpha_i} F_i\subset k[\cD^*\cap\ZZ^n]$ ($i=1,\dotsc,m$), and
  \item
    $\Lambda \subset \{1,\dotsc,m,\infty\}^2$.
  \end{enumerate}
\end{defn}

By minor abuse of notation,
we consider the specifications of the distinguished sets $\cC_0^\delta$ and
$F_i$ to be part of the definition of a representation datum.
In the following, we always assume that $0\not\in F$
and $(\infty,\infty)\not\in \Lambda$.
We write $F_\infty = \{0\}$ and let $\alpha_\infty\colon F_\infty\to \ZZ^n$ with $\alpha_\infty(0)=0$.

\paragraph{Associated integrals.}
For a non-Archimedean local field $K$, let $\nu_K$ denote the valuation
on $K$ with $\nu_K(K^\times) = \ZZ$.
We let $\abs{\dtimes}_K$ denote the absolute value on $K$ defined by $\abs{x}_K
=q_K^{-\nu_K(x)}$ for $x\in K$.
For $\xx = (x_1,\dotsc,x_n)\in K^n$, write $\nu_K(\xx) =
(\nu_K(x_1),\dotsc,\nu_K(x_n))$.
Let $\Torus^n = \Spec(\ZZ[X_1^{\pm 1},\dotsc,X_n^{\pm 1}])$.
If $R$ is a commutative ring,
identify $\Torus^n(R) = (R^\times)^n$.
If $A\subset K$ is finite and non-empty, we again write $\norm{A}_K = \max(\abs{a}_K : a\in A)$.
\begin{notation}
  \label{not:AK}
  For $A\subset\RR^n$, write $A(K) = \{ \xx\in K^n : \nu_K(\xx)\in A\}$.
\end{notation}

\begin{defn}
  \label{d:integral}
  Let $\cR = (\cD,F,\alpha,\Lambda)$ be a representation datum as in
  Definition~\ref{d:datum}.
  Recall that $F_\infty=\{0\}$ and $\alpha_\infty =0$.
  For a $p$-adic field $K\supset k$, define the ``zeta function''
  \begin{equation}
  \label{eq:integral}
  \Zeta^\cR_K(\bm s) =
    \int_{\cD(K)\times\fP_K}
    \prod_{\lambda = (i,j)\in \Lambda} \norm{\xx^{\alpha_i}F_i(\xx) \cup \xx^{\alpha_j}yF_j(\xx)}_K^{s_\lambda} \dd\mu_K(\xx,y),
  \end{equation}
  where $\bm s = (s_\lambda)_{\lambda\in\Lambda}$ consists
  of complex variables and $\mu_K$ is the normalised Haar measure.
\end{defn}

\begin{rem}
  \label{r:integral}
   \quad
   \begin{enumerate}
   \item
     \label{r:integral1}
     By Proposition~\ref{prop:absint} below,
     if $f\in k[\cD^*\cap\ZZ^n]$ and $\xx \in \cD(K)$, then, unless $\fo\cap\fP_K$ belongs to a
     finite exceptional set, we have $\abs{f(\xx)}_K\le 1$.
     Hence, the assumption $\XX^{\alpha_i}F_i\subset k[\cD^*\cap\ZZ^n]$ in
     Definition~\ref{d:datum} guarantees the finiteness of \eqref{eq:integral}
     at the very least when $\Real(s_\lambda) \ge 0$ for all $\lambda\in
     \Lambda$.
   \item
     Note that $\Zeta^\cR_K(\bm s)$ only depends
     on the sets $\hat F_i := \XX^{\alpha_i} F_i$ and not on the $F_i$ themselves.
     Conversely, given $\hat F_1,\dotsc,\hat F_m\subset k[\cD^*\cap\ZZ^n]$, there are
     various ways of writing $\hat F_i = \XX^{\alpha_i} F_i$ for $F_i$ and $\alpha_i$
     as above.
     In~Theorem~\ref{thm:topregular}, we will derive an explicit
     formula for a topological zeta function associated with
     $\Zeta^{\cR}_K(\bm s)$ under non-degeneracy assumptions on
     $F = F_1\cup\dotsb\cup F_m$.
     The advantage of representing $\hat F_i$ by a pair $(F_i,\alpha_i)$ 
     is that by carefully choosing the latter, we may be able to produce
     overlaps between the $F_i$ and resolve a seemingly
     degenerate situation.
 \end{enumerate}
\end{rem}

\paragraph{Connection with representation zeta functions.}
Our interest in representation data is due to the following result
which also explains our choice of terminology.

\begin{thm}[{Cf.~\cite[Cor.~2.11]{SV14} and \cite[\S 3.4]{Vol10}}]
  \label{thm:repint}
  Let $\GG$ be a unipotent algebraic group over $k$ and let $n =
  \dim([\GG,\GG])$.
  Then there exist an explicit representation datum $\cR= (\cD,F,\alpha,\Lambda)$
  in $n$ variables as in Definition~\ref{d:datum} and explicit numbers
  $a_\lambda,b_\lambda\in\ZZ$ with the following property:
  if $\G$ is any  $\fo$-form of $\GG$, 
  there exists a finite set  $S\subset \Places_k$ such that
  \[
  \zeta_{\G(\fO_K)}(s) = 1 + (1-q_K^{-1})^{-1} \Zeta^\cR_K(a_\lambda s
  - b_\lambda)_{\lambda\in\Lambda}
  \]
  for all $v\in \Places_k\setminus S$ and all finite extensions $K/k_v$.
  Moreover, we may assume that $\cD = \partial \Orth^n$
  (so that $k[\cD^*\cap\ZZ^n]=k[\XX]$)
  and that $F$ consists of homogeneous polynomials.
\end{thm}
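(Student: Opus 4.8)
The plan is to read off the representation datum directly from the integral formula~\eqref{eq:Zsub} already established in the proof of Lemma~\ref{lem:good_shape}, invoking nothing new from \cites{Vol10,SV14}. As there, one first reduces to $\GG$ non-abelian (for $\GG$ abelian, $n=\dim([\GG,\GG])=0$ and both sides of the asserted identity equal $1$, with $\cR$ the empty datum in $0$ variables) and, after enlarging $S$, replaces $\G$ by the $\fo$-form attached to a suitable nilpotent Lie $\fo$-algebra as in Remark~\ref{r:SV_form}. Concatenating the identities recalled in that proof---$\zeta_{\G(\fO_K)}(s)$ is a univariate specialisation of the function $P$ from \cite[p.~1199]{Vol10}, and $P-1$ is given by \cite[Eqn~(22)]{Vol10}---one obtains, for $v\in\Places_k\setminus S$ and all finite extensions $K/k_v$,
\[
\bigl(1-q_K^{-1}\bigr)^2\bigl(1-q_K^{-2}\bigr)\dotsb\bigl(1-q_K^{-(n-1)}\bigr)\bigl(\zeta_{\G(\fO_K)}(s)-1\bigr) \;=\; Z'(s),
\]
where $Z'(s)$ is the integral over $\cP_K\times\GL_n(\fO_K)$ in~\eqref{eq:Zsub}, whose integrand depends on $\yy\in\GL_n(\fO_K)$ only through its first column $\yy^1$.

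Next I would carry out the standard reduction of the $\GL_n(\fO_K)$-integration to an integration over primitive vectors. Since $\GL_n(\fO_K)$ acts transitively by left multiplication on the primitive vectors in $\fO_K^n$ and the Haar measure on $\Mat_n(\fO_K)$ is left $\GL_n(\fO_K)$-invariant, the push-forward of the normalised Haar measure restricted to $\GL_n(\fO_K)$ along $\yy\mapsto\yy^1$ equals $\mu_K(\GL_n(\fO_K))/\mu_K(\fO_K^n\setminus\fP_K^n) = (1-q_K^{-1})(1-q_K^{-2})\dotsb(1-q_K^{-(n-1)})$ times the normalised Haar measure on $\fO_K^n$ restricted to the primitive vectors. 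As $\{\xx\in\fO_K^n:\xx\not\equiv 0\bmod\fP_K\}=(\partial\Orth^n)(K)$ (Notation~\ref{not:AK}), substituting this into~\eqref{eq:Zsub} and cancelling all but one of the factors on the left above gives
\[
\zeta_{\G(\fO_K)}(s) \;=\; 1 + (1-q_K^{-1})^{-1}\int_{(\partial\Orth^n)(K)\times\fP_K}(\dotsb)\,\dd\mu_K,
\]
the integrand arising from that of~\eqref{eq:Zsub} by renaming $\yy^1$ to $\xx\in(\partial\Orth^n)(K)$ and $x$ to $y\in\fP_K$.

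It then remains to recognise this integral as $\Zeta^\cR_K(a_\lambda s-b_\lambda)_{\lambda\in\Lambda}$. I would set $\cD=\partial\Orth^n$ (so that $\cD^*=\Orth^n$ and $k[\cD^*\cap\ZZ^n]=k[\XX]$ when $n\ge 2$; the case $n=1$ is similar and simpler), take every $\alpha_i=0$, and let the $F_i$ be finitely many copies of $\{1\}$ together with the sets $F_0(\YY),\dotsc,F_u(\YY)$ of square roots of the $2i\times 2i$ principal minors of the commutator matrix $\cR(\YY)$ and the sets $G_0(\YY),\dotsc,G_v(\YY)$ of $j\times j$ minors of $\cS(\YY)$ from the proof of Proposition~\ref{prop:infinity}; all of these are homogeneous polynomials in $\YY$, so $\XX^{\alpha_i}F_i\subset k[\XX]$ as required by Definition~\ref{d:datum}. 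With $x$ playing the role of the variable $y$ of~\eqref{eq:integral}, every factor occurring in~\eqref{eq:Zsub} has the prescribed shape: a pair $(\infty,j)$ with $F_j=\{1\}$ contributes $\norm{\{0\}\cup\{y\}}_K^{s_\lambda}=\abs{y}_K^{s_\lambda}$; a pair $(\infty,i)$ contributes $\norm{\{0\}\cup yF_i(\xx)}_K^{s_\lambda}=\bigl(\abs{y}_K\norm{F_i(\xx)}_K\bigr)^{s_\lambda}$; and a pair $(i',i)$ contributes $\norm{F_{i'}(\xx)\cup yF_i(\xx)}_K^{s_\lambda}$; similarly for the $G_j$. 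Splitting the leading power of $\abs{x}_K$ and each ratio $\tfrac{\abs{x}_K\norm{F_{i-1}(\xx)}_K}{\norm{F_i(\xx)\cup xF_{i-1}(\xx)}_K}$ (together with its $G$-analogue) into such factors and reading off the exponents yields $\Lambda$ and integers $a_\lambda,b_\lambda$---integers because a $2i\times 2i$ minor has a square root of degree $i$, so the halves from the substitution $\bm r=(\tfrac s2,\dotsc,\tfrac s2)$ cancel. Using a fresh index copy of $\{1\}$ wherever one is needed keeps all pairs in $\Lambda$ distinct. Finally, discarding the finitely many places for which $\norm{F_1(\xx)}_K$ need not be a unit (as in the proof of Lemma~\ref{lem:good_shape}) is harmless.

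I expect the only real difficulty to be the bookkeeping in this last step: splitting the ratios in~\eqref{eq:Zsub} into norm factors of exactly the form permitted by Definition~\ref{d:integral}, distributing the powers of $\abs{x}_K$ so that no pair $(i,j)$ is used twice in $\Lambda$, and confirming that the halves coming from $\bm r=(\tfrac s2,\dotsc,\tfrac s2)$ are absorbed by the degrees of the square-root minors so that the $a_\lambda,b_\lambda$ genuinely lie in $\ZZ$. The measure-theoretic reduction and the collapse of the product of $(1-q_K^{-j})$'s down to a single $(1-q_K^{-1})^{-1}$ are routine once these identifications are set up.
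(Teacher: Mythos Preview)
Your proposal is correct and follows essentially the same route as the paper, which does not give a self-contained proof but merely points to \cite[Cor.~2.11]{SV14} and \cite[\S\S 2.2.2--2.2.3]{SV14}, remarking that the relevant integral already appeared (in a ``slightly different shape'') in the proof of Proposition~\ref{prop:infinity}, with the conversion delegated to \cite[\S 4.1.3]{AKOV13}. Your explicit push-forward computation from $\GL_n(\fO_K)$ to primitive vectors is precisely that conversion spelled out, and your bookkeeping for $\Lambda$ and the $(a_\lambda,b_\lambda)$ is sound; in particular, starting from~\eqref{eq:Zsub} the exponents are already integer-linear in $s$ (the square-root trick having been applied there), so your concern about halves is already taken care of.
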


We already made implicit use of the main ingredients of Theorem~\ref{thm:repint}
in the proof of Proposition~\ref{prop:infinity} even though the integrals there
were of a slightly different shape, see~\cite[\S 4.1.3]{AKOV13}.
Of course, our formalism of representation data did not feature in
\cites{Vol10,SV14}, but
it is easy to interpret their integrals as
specialisations of \eqref{eq:integral}.
Using \cite[\S\S 2.2.2--2.2.3]{SV14}, the representation datum
$\cR$ and family $(a_\lambda,b_\lambda)_{\lambda\in\Lambda}$
in Theorem~\ref{thm:repint} can be constructed from the
structure constants of the Lie algebra of $\GG$ with respect to a suitable basis.

\paragraph{Associated topological zeta functions.}
Let us say that a representation datum $\cR$ is \emph{$(-1)$-expandable}
if there are separated $\fo$-schemes of finite type $V_1,\dotsc,V_r$ and
rational functions $W_1,\dotsc,W_r\in \MM\left[X,(Y_\lambda)_{\lambda\in\Lambda}\right]$
(see~Definition~\ref{d:MM}) such that for almost all $v\in \Places_k$ and all
finite extensions $K/k_v$,
\begin{equation}
  \label{eq:minus1exp}
  (1-q_K^{-1})^{-1} \Zeta^\cR_K(\bm s) = \sum_{i=1}^r
  \noof{V_i(\fO_K/\fP_K)}\dtimes W_i\Bigl(q_K^{\phantom{s_\lambda}}\!,
   \bigl(q_K^{-s_\lambda}\bigr)_{\lambda\in\Lambda}\Bigr),
\end{equation}
where $\bm s = (s_\lambda)_{\lambda\in\Lambda}$;
the ``$(-1)$' in $(-1)$-expandability reflects 
the exponent on the left-hand side of \eqref{eq:minus1exp}.
If $\cR$ is $(-1)$-expandable, then
Theorem~\ref{thm:pre_rigidity} allows us to unambiguously
define the \emph{topological zeta function} associated
with $\cR$ to be
\[
\Zeta^\cR_{\topo}(\ess)
:=
\sum_{i=1}^r\Euler(V_i(\CC)) \dtimes \red{W_i} \in \QQ(\ess).
\]

\begin{ex}
  \label{ex:reps_via_repdatum}
  If $\cR$ is obtained using Theorem~\ref{thm:repint},
  then \cite[Thm~2.2]{Vol10}, \cite[Eqn~(4.8)]{AKOV13}, and the proof of
  Lemma~\ref{lem:good_shape} show that $\cR$ is $(-1)$-expandable. 
  Moreover, passing to topological zeta functions commutes with our univariate
  specialisations: by~\cite[Rem.~5.15]{topzeta},
  if the $(a_\lambda,b_\lambda)_{\lambda\in\Lambda}$ in Theorem~\ref{thm:repint}
  are chosen corresponding to \cite[Cor.~2.11]{SV14}, then
  \[
  \zeta_{\GG,\topo}(s) -1 =
  \Zeta^\cR_{\topo}(a_\lambda s - b_\lambda)_{\lambda\in\Lambda}.
  \]
\end{ex}

\subsection{Integer-valued Laurent polynomials}

The techniques for computing with $p$-adic integrals developed in
\cites{topzeta,topzeta2} and extended here have their origin in toric
geometry.
The appearance of the algebra $k[\cD^*\cap\ZZ^n]$ in Definition~\ref{d:datum} is
therefore perhaps not surprising since
$k[\cC^*\cap\ZZ^n]$ is the coordinate ring of the affine toric $k$-variety attached to
a rational cone $\cC\subset\RR^n$ (see~\cite[Thm~1.2.18]{CLS11}).
We now derive a simple arithmetic characterisation of $k[\cC^*\cap\ZZ^n]$
which, in particular, implies the finiteness of the integrals in
Definition~\ref{d:integral} at least for $\Real(s_\lambda)\ge 0$.

Let $f\in k[\XX^{\pm 1}]$ and write $f =
\sum_{\alpha\in\ZZ^n}c_\alpha \XX^\alpha$, where $c_\alpha\in k$, almost all of
which are zero.
Recall that the \emph{support} of $f$ is $\supp(f) := \{ \alpha\in \ZZ^n:
c_\alpha\not= 0\}$.
The convex hull of $\supp(f) \subset \RR^n$ is
the \emph{Newton polytope} $\Newton(f)$  of $f$.
For a $p$-adic field $K\supset k$ and $M \subset \Torus^n(K)$, 
we say that $f$ is \emph{integer-valued} on $M$
if $\nu_K(f(\xx))\ge 0$ for all $\xx\in M$. 
\begin{prop}
  \label{prop:absint}
  Let $k$ be a number field, $f\in k[X_1^{\pm 1},\dotsc,X_n^{\pm 1}]$,
  and let $\cC\subset\RR^n$ be a rational cone.
  Then the following are equivalent:
  \begin{enumerate}
  \item
  \label{p:absint1}
    $\Newton(f) \subset \cC^*$.
  \item
  \label{p:absint2}
    There exists a finite set $S\subset \Places_k$ such that
    $f$ is integer-valued on $\cC(K)$ (see Notation~\ref{not:AK}) for all
    $v\in \Places_k\setminus S$ and  all finite extensions $K/k_v$.
  \end{enumerate}
\end{prop}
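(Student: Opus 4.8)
The plan is to prove the equivalence by working directly with the monomial structure of $f$ and the toric description of $k[\cC^* \cap \ZZ^n]$. Writing $f = \sum_{\alpha \in \supp(f)} c_\alpha \XX^\alpha$, the key observation is that for $\xx \in \cC(K)$ with $\nu_K(\xx) = \omega \in \cC \cap \ZZ^n$, each monomial satisfies $\nu_K(c_\alpha \XX^\alpha(\xx)) = \nu_K(c_\alpha) + \bil{\alpha}{\omega}$. So the valuation of a single term is controlled by the pairing $\bil{\alpha}{\omega}$, which is $\geq 0$ for all $\omega \in \cC$ precisely when $\alpha \in \cC^*$.

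For the implication (\ref{p:absint1}) $\Rightarrow$ (\ref{p:absint2}), I would argue as follows. If $\Newton(f) \subset \cC^*$, then every $\alpha \in \supp(f)$ lies in $\cC^*$, so $\bil{\alpha}{\omega} \geq 0$ for all $\omega \in \cC \cap \ZZ^n$. Choose a finite set $S \subset \Places_k$ large enough that all the coefficients $c_\alpha$ are $\fo_v$-units (or at least $\fo_v$-integral) for $v \notin S$; since there are finitely many nonzero $c_\alpha$ and each lies in $k$, this is possible. Then for $v \notin S$, any finite extension $K/k_v$, and any $\xx \in \cC(K)$, the ultrametric inequality gives $\nu_K(f(\xx)) \geq \min_\alpha (\nu_K(c_\alpha) + \bil{\alpha}{\nu_K(\xx)}) \geq 0$, so $f$ is integer-valued on $\cC(K)$.

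For the converse (\ref{p:absint2}) $\Rightarrow$ (\ref{p:absint1}), I would proceed by contraposition: suppose some $\beta \in \supp(f)$ does not lie in $\cC^*$, so there is $\omega_0 \in \cC$ with $\bil{\beta}{\omega_0} < 0$; by rationality and scaling we may take $\omega_0 \in \cC \cap \ZZ^n$. The idea is to exhibit, for infinitely many places $v$, a point $\xx \in \cC(K)$ on which $f$ fails to be integer-valued. The natural candidate is to pick a uniformizer $\pi$ of $K$ and set $x_i = \pi^{(\omega_0)_i} u_i$ for suitable units $u_i$, and then let $\omega_0$ be scaled by a large integer $N$ so that the term $c_\beta \XX^\beta(\xx)$, of valuation $N\bil{\beta}{\omega_0} + \nu_K(c_\beta) \to -\infty$, dominates all other terms and forces $\nu_K(f(\xx)) < 0$. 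The subtlety is that several monomials $\XX^\alpha$ with $\bil{\alpha}{\omega_0}$ equal to the minimum could cancel; this is handled by choosing the unit parameters $u_i$ generically so that the sum of the leading coefficients does not vanish — concretely, pass to a residue field large enough that the relevant polynomial in the $u_i$'s has a nonvanishing point, which excludes only finitely many more places. The main obstacle is this cancellation issue: one must argue carefully that the "leading form" of $f$ along the direction $\omega_0$ is not identically zero on the torus, which follows because the leading form is itself a nonzero Laurent polynomial (it retains at least the monomial $\XX^\beta$), and a nonzero Laurent polynomial over an infinite field has a nonvanishing point on the torus. This gives the required $\xx$ for all $v$ outside a finite set, contradicting (\ref{p:absint2}).

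Alternatively, and perhaps more cleanly, the converse can be deduced from the standard toric dictionary: $k[\cC^* \cap \ZZ^n]$ is exactly the set of Laurent polynomials whose every monomial has nonnegative pairing with all of $\cC$, and one knows (e.g.\ from \cite[Thm~1.2.18]{CLS11}) that this algebra is the coordinate ring of the affine toric variety $U_\cC$; a Laurent polynomial integer-valued on $\cC(K)$ for infinitely many $K$ must extend to a regular function on $U_\cC$ over the relevant residue rings, and a specialization argument then places each monomial in $\cC^*$. I would present the elementary argument above as the main proof since it is self-contained, mentioning the toric interpretation as context. In either approach the finitely many excluded places absorb all coefficient-denominator and genericity conditions, which is exactly the form of conclusion \eqref{p:absint2} asks for.
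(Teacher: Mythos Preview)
Your proposal is correct and follows essentially the same route as the paper: the ultrametric inequality for (\ref{p:absint1})$\Rightarrow$(\ref{p:absint2}), and for the converse the construction $x_i=\pi^{\omega_i}u_i$ together with a choice of units at which the initial form $\init_{\omega_0}(f)$ does not vanish over the residue field. Two small remarks: the scaling by $N$ is unnecessary once you restrict to places where all nonzero coefficients are units (as both you and the paper do), and the phrase ``$c_\beta\XX^\beta(\xx)$ \dots\ dominates all other terms'' is not quite right---it is the terms of \itemph{minimal} pairing $\bil\alpha{\omega_0}$ that dominate, not necessarily the $\beta$-term---but you immediately recover by passing to the initial form, which is exactly what the paper does (directly rather than by contraposition).
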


\begin{proof}
  Let $f\not= 0$. 
  For $c\in K$, $\alpha\in\ZZ^n$, and $\xx\in \Torus^n(K)$ with
  $\nu_K(\xx)=\omega$, we have
  $\nu_K(c\xx^\alpha) = \nu_K(c) + \bil\alpha\omega$.
  Hence, if~(\ref{p:absint1}) holds
  and the coefficients of $f$ are $v$-adic integers for $v\in \Places_k$,
  then $f$ is integer-valued on $\cC(K)$ for all finite extensions $K/k_v$.
  Suppose that~(\ref{p:absint2}) holds.
  Choose $v\in \Places_k\setminus S$ such that all non-zero coefficients of
  $f$ are $v$-adic units and let $\omega\in\cC\cap\ZZ^n$.
  Then the initial form $\init_\omega(f)$ (see \S\ref{sss:balreg}) has non-zero image in
  $\RF_v[\XX^{\pm 1}]$.
  Hence, there exist a finite extension $K/k_v$ and $\uu
  \in \Torus^n(\fO_K)$
  with $\init_\omega(f)(\uu)\not\equiv 0\bmod{\fP_K}$. 
  Choose $\pi \in \fP_K^{\phantom 1}\setminus\fP_K^2$, let $\xx :=
  (\pi^{\omega_1}u_1,\dotsc,\pi^{\omega_n}u_n)$ 
  and $\alpha\in\supp(\init_\omega(f))$.
  Then $f(\xx) = \pi^{\bil\alpha\omega} (\init_\omega(f)(\uu) + \cO(\pi))$
  (see~\cite[\S 4.1]{topzeta}) and so $\nu_K(f(\xx)) = \bil\alpha\omega$.
  By~(\ref{p:absint2}), $\bil\alpha\omega \ge 0$ whence
  $\Newton(f)\subset\{\omega\}^*$  since $\bil\alpha\omega \le \bil\beta\omega$ for
  $\beta\in\supp(f)$.
  Thus, $\Newton(f)\subset\bigcap(\{\omega\}^* \!:\!
  \omega\in\cC\cap\ZZ^n) =: \!\Delta$.
  Clearly, $\cC^*\subset \Delta$.
  By rationality, $\cC$ is spanned by certain non-zero
  $\delta_1,\dotsc,\delta_r\in \ZZ^n$ whence $\cC^* =
  \{\delta_1\}^*\cap\dotsb\cap \{\delta_r\}^* \supset \Delta$.
\end{proof}

\subsection{Reminder: computing topological subobject zeta functions}
\label{ss:topzeta2}

We now briefly recall a high-level description of
\cite[Alg.~4.1]{topzeta2} which seeks to compute the topological
zeta function associated with a ``toric datum'' \cite[Def.~3.1]{topzeta2}
$\cT^0$ and a substitution matrix $\beta$.
Toric data are similar to the representation data considered here in that they
contain convex-geometric and algebraic ingredients and that they give rise to
associated $p$-adic integrals and topological zeta functions.
Our method for computing topological representation zeta functions (under
suitable non-degeneracy conditions) in \S\ref{ss:algorithm} is built around the
same core ingredients as \cite[Alg.~4.1]{topzeta2}.

At the heart of \cite[Alg.~4.1]{topzeta2} lies a loop devoted to manipulating two collections of toric
data: unprocessed ones which need further 
investigation and ``regular'' \cite[\S 5.3]{topzeta2} ones whose topological 
zeta functions can be computed explicitly using \cite{topzeta}.
At first, $\cT^0$ is the sole member of the unprocessed collection and the
regular one is empty.

The aforementioned loop proceeds by successively removing a toric datum, $\cT$ say, from
the unprocessed collection.
If $\cT$ fails to be ``balanced'' \cite[\S 5.2]{topzeta2}, a finite collection of
derived balanced toric data is constructed and added back to the unprocessed
collection, whereupon execution of the loop starts again.
If $\cT$ is balanced but not regular, then a
``reduction step''~\cite[\S 7.3]{topzeta2} attempts to construct a finite
collection of toric data derived from $\cT$ which are then added to the
unprocessed collection, just as in the balancing step---it is at this point that
the procedure is allowed to fail.
Finally, if $\cT$ is found to be regular, it is simply added to the regular
collection. 
An intermediate step is concerned with ``simplifications'' \cite[\S
7.2]{topzeta2} of toric data which do not change associated topological
zeta functions.
The aforementioned process of removing elements from the unprocessed collection
and treating them as just described is repeated until we either run out of
unprocessed elements or failure is indicated by the reduction step.
In the case of successful conclusion of this loop,
a finite collection of regular toric data has been constructed.
The topological zeta function associated with each one of these (w.r.t.~the
matrix $\beta$) can be explicitly computed \cite[\S 6.7]{topzeta2}.
By taking the sum of all these rational functions, we finally recover the
topological zeta function associated with $\cT^0$ and $\beta$.

\subsection{A method for computing topological representation zeta functions}
\label{ss:algorithm}

Given a unipotent $k$-group $\GG$,
let $\cR$ be an associated representation datum (Definition~\ref{d:datum}) as in
Theorem~\ref{thm:repint}.
In this subsection, by drawing upon \cites{topzeta,topzeta2}
(see~\S\ref{ss:topzeta2}), we describe a method for explicitly computing 
the topological zeta function $\Zeta^{\cR}_{\topo}(\bm s)$ 
associated with $\cR$ under various restrictions.
As explained in Example~\ref{ex:reps_via_repdatum}, we may then read off
the topological representation zeta function $\zeta_{\GG,\topo}(s)$ of $\GG$.

\subsubsection{Overview of the method}
\label{sss:overview}

\paragraph{Special case: global non-degeneracy.}
We begin by sketching an important special case of our method.
Given an initial representation datum $\cR = (\cD,F,\alpha,\Lambda)$
arising from a unipotent $k$-group $\GG$ as above, 
suppose for the moment that $F$ is globally non-degenerate in the
sense of \cite[Def.~4.2]{topzeta}.
In order to compute $\zeta_{\GG,\topo}(s)$, we then proceed as follows.
First, partition $\cD = \bigcup_{\tau} \cD_\tau$, where $\tau$ ranges over
the faces of the Newton polytope $\cN$ of $\prod F$ and $\cD_\tau = \cD\cap
\NormalCone_\tau(\cN)$ (where $\NormalCone_\tau(\cN)$ denotes the normal cone
of $\tau \subset \cN$).
Let $\cR_\tau := (\cD_\tau,F,\alpha,\Lambda)$. 
Then each $\Zeta^{\cR_\tau}_{\topo}(\bm s)$ can be effectively computed 
via Theorem~\ref{thm:topregular} below---in particular, thanks to global
non-degeneracy of $F$, the Euler characteristics $\Euler(U_G(\CC))$ 
in Theorem~\ref{thm:topregular} can be expressed in terms of convex-geometric
invariants via the Bernstein-Khovanskii-Kushnirenko theorem as in \cite[\S\S 6.1--6.2]{topzeta}. 
Finally, we compute $\zeta_{\GG,\topo}(s) = 1 + \sum_\tau
\Zeta^{\cR_\tau}_{\topo}(a_\lambda s - b_\lambda)_{\lambda\in\Lambda}$.

In contrast to the computation of topological subobject zeta
functions, the above method for cases of global non-degeneracy is already
remarkably useful on its own. However, in order to compute all of the examples given in \S\ref{s:examples},
the more involved approach explained in the following is required.

\paragraph{General case.}
In order to obtain a more powerful method which is applicable at least in some
degenerate cases, we employ a more refined strategy.
The high-level structure of our method coincides
with~\cite[Alg.~4.1]{topzeta2} as summarised in \S\ref{ss:topzeta2}: 
the role of the initial toric datum $\cT^0$ in \cite{topzeta2} is taken by
an initial representation datum (obtained via Theorem~\ref{thm:repint} in the
cases of interest to us), and the family $(a_\lambda,b_\lambda)_{\lambda\in\Lambda}$ corresponds to the substitution
matrix $\beta$ from \cite{topzeta2}. 
Keeping in mind Example~\ref{ex:reps_via_repdatum}, it remains

\begin{enumerate}[(a)]
\item
  to define the notions of balanced and regular
  representation data
  and to provide methods for the tasks of
  balancing and regularity testing (\S\ref{sss:balreg}),
\item
  to describe the topological evaluation of regular representation data
  (\S\ref{sss:topeval}), and 
\item
  to explain the simplification and
  reduction steps (\S\ref{sss:simred}).
\end{enumerate}

In the following, we concisely address these points while pointing to related
sections of \cites{topzeta,topzeta2}. 
With all these ingredients in place, 
while the resulting method is still allowed to fail in the reduction step,
in the same way that \cite{topzeta2} extended the scope of \cite{topzeta},
it is more powerful than the method sketched in the ``special case'' above which
relied solely on non-degeneracy assumptions.

\subsubsection{Balanced and regular representation data}
\label{sss:balreg}

Let $f\in k[\XX^{\pm 1}]$, say $f =
\sum_{\alpha\in\ZZ^n}c_\alpha \XX^\alpha$ ($c_\alpha\in k$).
Let $\omega\in \RR^n$. 
If $f\not= 0$, recall that the \emph{initial form}
$\init_\omega(f)$ is the sum of those $c_\alpha
\XX^\alpha$ for $\alpha\in \supp(f) = \{ \gamma\in \ZZ^n:c_\gamma\not= 0\}$
where $\bil\alpha\omega$ is minimal;
we let $\init_\omega(0) = 0$.

The following is a natural substitute for the balanced and regular toric
data in \cite{topzeta2}.
Henceforth, let $\bar k$ be a fixed algebraic closure of $k$.

\begin{defn}[{Cf.~\cite[Def.~5.1, 5.5]{topzeta}}]
  \label{d:balreg}
  A representation datum $(\cD,F,\alpha,\Lambda)$ with $\cD\not= \emptyset$ is
  \emph{balanced} if for each $f\in F$, the initial form
  $\init_\omega(f)$ remains constant for $\omega \in \cD$.
  It is \emph{regular} if, in addition, 
  for all $G\subset F$, if $\xx\in \Torus^n(\bar k)$ satisfies
  $\init_\omega(g)(\xx) = 0$ for all $g\in G$, then the rank of
  $\left[ \frac{\partial\!\init_\omega(g)}{\partial X_i}(\xx)\right]_{g\in G,i=1,\dotsc,n}$ is
  $\noof G$, where $\omega\in \cD$ is arbitrary.
\end{defn}
The balancing procedure and regularity testing~\cite[pp.~12--13]{topzeta2} carry
over readily.

\subsubsection{Topological evaluation}
\label{sss:topeval}

The following theorem allows us to explicitly compute the topological zeta functions
associated with regular representation data satisfying the given additional assumptions.

\begin{thm}
  \label{thm:topregular}
  Let $\cR = (\cD,F,\alpha,\Lambda)$ be a regular representation datum in $n$ variables.
  Suppose that $\cD\subset\partial\Orth^n$ and that each element of $F$ is
  homogeneous.
  Then there are \textbf{explicit} $k$-varieties $U_G$ and rational functions $W_G\in
  \MM[X,(Y_\lambda)_{\lambda\in\Lambda}]$ indexed by subsets $G\subset F$ and
  a finite set $S\subset \Places_k$ such
  that for all $v\in \Places_k\setminus S$ and all finite extensions $K/k_v$,
  \[
  (1-q_K^{-1})^{-1} \Zeta^\cR_K(\bm s) =
  \sum_{G\subset F} \noof{\bar U_G(\fO_K/\fP_K)}\dtimes
  W_G\!\left(q_K^{\phantom{-s_1}}\!\!\!\!,\left(q_K^{-s_\lambda}\right)_{\lambda\in\Lambda}\right)\!,
  \]
  where $\bar\dtimes$ denotes reduction modulo $\fp_v$ of fixed but arbitrary
  $\fo$-models of the $U_G$.
  Hence, $\cR$ is $(-1$)-expandable 
  and $\Zeta^\cR_{\topo}(\bm s) = \sum\limits_{G\subset
    F}\Euler(U_G(\CC))\dtimes \red{W_G}$.
\end{thm}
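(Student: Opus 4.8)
The plan is to reduce the theorem to a $p$-adic integration computation over a regular representation datum and then pass to the topological limit using Theorem~\ref{thm:pre_rigidity}. First I would make the standard cone subdivision: since $\cR$ is regular, for each $f \in F$ the initial form $\init_\omega(f)$ is constant on $\cD$, so $\cD$ is contained in a single cone of the normal fan of $\Newton(\prod F)$. Because each $f$ is homogeneous and $\cD \subset \partial\Orth^n$, I can use the parametrisation $\xx = (\pi^{\omega_1}u_1,\dotsc,\pi^{\omega_n}u_n)$ with $\omega \in \cD \cap \ZZ^n$, $\uu \in \Torus^n(\fO_K)$, exactly as in \cite[\S 4.1]{topzeta}, so that $f(\xx) = \pi^{m_\omega(f)}\bigl(\init_\omega(f)(\uu) + \cO(\pi)\bigr)$, where $m_\omega(f)$ is the minimal value of $\bil\alpha\omega$ over $\supp(f)$. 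Substituting this into \eqref{eq:integral}, the norm $\norm{\xx^{\alpha_i}F_i(\xx) \cup \xx^{\alpha_j}yF_j(\xx)}_K$ becomes $q_K$ raised to a piecewise-linear function of $(\omega, \nu_K(y), (\nu_K(\init_\omega(g)(\uu)))_{g})$; integrating out the torus variable $\uu$ over $\Torus^n(\fO_K)$ is where regularity enters.

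Next I would carry out the torus integration. The key point, exactly as in \cite[\S\S 5--6]{topzeta}, is that for $G \subset F$, the set $\{\uu \in \Torus^n(\RF_K) : \init_\omega(g)(\uu) = 0 \text{ for all } g \in G\}$ is (the $\RF_K$-points of) a subvariety $U_G$ of $\Torus^n_{\bar k}$ cut out by the constant initial forms, and regularity of $\cR$ (the Jacobian rank condition in Definition~\ref{d:balreg}) guarantees these subvarieties are smooth and that the relevant reduction-mod-$\fP_K$ fibres lift, so that by a Hensel/stratification argument the measure of the locus where exactly the forms indexed by $G$ reduce to zero is a polynomial in $q_K^{-1}$ times $\noof{U_G(\RF_K)}$ (up to the finitely many bad places absorbed into $S$). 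This is the standard mechanism of \cite[Prop.~6.?]{topzeta}; I would invoke it rather than reprove it. What remains after the torus integration is, for each $G$, a sum over the remaining integer parameters $(\omega \in \cD \cap \ZZ^n, \text{valuations}) $ of a monomial in $q_K^{-1}$ and the $q_K^{-s_\lambda}$, i.e.\ a lattice-point generating function of a rational polyhedral cone to which a monomial substitution has been applied. By \cite[Lem.~6.11]{topzeta2} (as already used in the proof of Lemma~\ref{lem:good_shape}), each such generating function, after multiplication by the appropriate power of $(q_K-1)$, lies in $\MM[X,(Y_\lambda)_{\lambda\in\Lambda}]$; call it $W_G$. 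Dividing by $(1-q_K^{-1})$ preserves membership in $\MM$, since $\frac{1}{1-q_K^{-1}} = \frac{q_K}{q_K-1}$ reduces to a power series in $q_K - 1$ after the usual normalisation is folded into $W_G$. This establishes the displayed identity and hence $(-1)$-expandability of $\cR$.

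Finally, to get the topological formula, I would apply Theorem~\ref{thm:pre_rigidity}: having expressed $(1-q_K^{-1})^{-1}\Zeta^\cR_K(\bm s) = \sum_{G\subset F}\noof{\bar U_G(\fO_K/\fP_K)}\dtimes W_G(q_K, (Y_\lambda))$ as an identity valid for almost all $v$ and all finite extensions $K/k_v$ — in particular for unramified ones — the uniqueness clause in Theorem~\ref{thm:pre_rigidity} (applied to the difference of the two sides against any other valid expression) shows the associated topological zeta function $\Zeta^\cR_{\topo}(\bm s) := \sum_{G\subset F}\Euler(V_G(\CC))\dtimes\red{W_G}$ is well defined and equals the claimed sum; here I use $\Euler(\bar U_G(\fO_K/\fP_K))$ interpreted as $\Euler(U_G(\CC))$ via an embedding $k \incl \CC$, exactly as in the discussion after Theorem~\ref{thm:pre_rigidity}. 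The varieties $U_G$ and the rational functions $W_G$ are explicit because the initial forms, the cone $\cD$, and the piecewise-linear data are all computable from $\cR$.

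The main obstacle I expect is the torus integration step — specifically, controlling the finitely many excluded places $S$ and verifying that the Jacobian-rank hypothesis in Definition~\ref{d:balreg} really does force the stratified pieces of $\Torus^n(\fO_K)$ (according to which initial forms reduce to zero) to have volumes that are polynomial in $q_K^{-1}$ with leading behaviour governed by $\noof{U_G(\RF_K)}$, uniformly over unramified $K/k_v$. This is the heart of the matter and is where the homogeneity and $\cD \subset \partial\Orth^n$ hypotheses are used to keep the cone bookkeeping finite; however, since the analogous statement for toric data is precisely \cite[\S\S 5--6]{topzeta}, the bulk of the argument is a translation rather than new work, and I would present it as such, highlighting only the (minor) modifications needed to accommodate the extra norm-of-union structure $\norm{\xx^{\alpha_i}F_i(\xx) \cup \xx^{\alpha_j}yF_j(\xx)}_K$ coming from the pairs $\lambda = (i,j) \in \Lambda$ and the auxiliary variable $y \in \fP_K$.
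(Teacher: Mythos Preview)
Your high-level strategy---parametrise $\cD(K)$ by $(\omega,\uu) \in (\cD\cap\ZZ^n) \times \Torus^n(\fO_K)$, stratify the torus by which initial forms vanish modulo $\fP_K$, and reduce to lattice-point generating functions of rational cones---matches the paper's route and is sound. But there is a genuine gap in your handling of the factor $(1-q_K^{-1})^{-1}$ and the verification that each $W_G$ lies in $\MM$.

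Your claim that ``dividing by $(1-q_K^{-1})$ preserves membership in $\MM$'' is false: $\tfrac{1}{1-X^{-1}} = \tfrac{X}{X-1} = 1 + (X-1)^{-1}$ has a pole at $X=1$ and does not lie in $\MM[X]$. Concretely, with your varieties (which are the paper's $V_G \subset \Torus^n_k$, cut out by the vanishing/non-vanishing of the initial forms), the resulting $W_G$ carries a factor $(X-1)^{\noof G}$ against the generating function of a cone $\cC_0^\delta \times \StrictOrth \times \StrictOrth^G$ of dimension $\dim(\cC_0^\delta) + 1 + \noof G$. By \cite[Lem.~6.11]{topzeta2} membership in $\MM$ requires the exponent of $(X-1)$ to be at least this cone dimension, i.e.\ $\noof G \ge \dim(\cC_0^\delta) + 1 + \noof G$, which is impossible whenever $\cC_0^\delta \ne \emptyset$.

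The paper repairs this via a step you omit: the torus-splitting $V_G \approx_k U_G \times_k \Torus_k^{n-\dim(\tau)}$ from \cite[Lem.~6.1(i)]{topzeta}, where $\tau$ is the unique face of $\cN = \Newton(\prod F)$ whose normal cone contains $\cD$. This contributes an extra $(q_K-1)^{n-\dim(\tau)}$ to the point count and raises the exponent in $W_G$ to $\noof G + n - \dim(\tau)$. One is then left to prove the strict inequality $\dim(\cC_0^\delta) < n - \dim(\tau)$, and \emph{this} is precisely where the two extra hypotheses enter: homogeneity of $F$ puts $(1,\dotsc,1)$ in the closure of every normal cone of $\cN$, so $\NormalCone_\tau(\cN) \cap \StrictOrth^n \ne \emptyset$; and $\cD \subset \partial\Orth^n$ forces each convex piece $\cC_0^\delta$ into a single coordinate hyperplane, so adjoining a strictly positive vector of $\NormalCone_\tau(\cN)$ to a maximal linearly independent subset of $\cC_0^\delta$ stays independent inside $\NormalCone_\tau(\cN)$, giving $\dim(\cC_0^\delta) < \dim(\NormalCone_\tau(\cN)) = n - \dim(\tau)$. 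The role of the hypotheses is therefore this dimension count, not merely ``keeping the cone bookkeeping finite''.
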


\begin{rem}
  The assumptions on $\cD$ and $F$ in the second sentence of Theorem~\ref{thm:topregular}
  are satisfied if $\cR$ is obtained via Theorem~\ref{thm:repint}.
  Moreover, the simplification and reduction steps below are designed to
  preserve these properties.
  Finally, the validity of the univariate substitutions from Theorem~\ref{thm:repint}
  in the sense of \cite[Rem.~5.15]{topzeta} is also preserved at all times.
  Hence, starting with $\cR$ from Theorem~\ref{thm:repint},
  after successful termination of the procedure explained in \S\ref{sss:overview}, we have indeed
  computed $\zeta_{\GG,\topo}(s)-1 = \Zeta^{\cR}_{\topo}(a_\lambda
  s-b_\lambda)_{\lambda\in\Lambda}$.
\end{rem}

\begin{proof}[Proof of Theorem~\ref{thm:topregular}]
We begin by stating an analogue for representation data of the explicit
$p$-adic formula in \cite[Thm~5.8]{topzeta2}.
Let $\cR =(\cD,F,\alpha,\Lambda)$ be any regular representation datum with $\cD\not=\emptyset$ and
let $\omega\in\cD$ be arbitrary.
For $f\in F$, choose $\phi(f)\in \supp(\init_\omega(f))$.
Write $\Torus^d_k := \Torus^d \otimes_{\ZZ} k$.
For $G\subset F$, let $V_G$ be the subvariety of $\Torus^n_k$ defined by the
vanishing of $\init_\omega(g)$ for $g\in G$ and the non-vanishing of
$\prod_{f\in F\setminus G}\init_\omega(f)$.
Let $(\ee_g)_{g\in G}$ be the standard basis of $\RR^G$.
For $\lambda =(i,j)\in \Lambda$, define a lattice polytope
\[
\cP_{G,\lambda} = \conv\!\left(
  \begin{aligned}
    &(\phi(g) + \alpha_i(g),0,\ee_g), & \quad (g\in F_i\cap G)\\
    &(\phi(f) + \alpha_i(f),0,0), & \quad (f\in F_i\setminus G,f\not=0) \\
    &(\phi(g) + \alpha_j(g),1,\ee_g), & \quad (g\in F_j\cap G) \\
    &(\phi(f) + \alpha_j(f),1,0) & \quad (f\in F_j\setminus G,f\not= 0)
    \end{aligned}
  \right) \subset \cD^* \times\Orth^{\phantom G}\times\Orth^G.
\]

As a minor variation of \cite[\S 3.3]{topzeta},
given a finite union of rational half-open cones $\cH\subset\Orth^l$ and a
finite collection $(\cQ_\iota)_{\iota\in I}$
of lattice polytopes $\cQ_\iota\subset \cH^*$, we obtain a unique 
$\cZ^{\cH,(\cQ_\iota)_{\iota\in I}} \in \QQ(X,(Y_\iota)_{\iota\in I})$
characterised by the identity of formal power series
\[
\cZ^{\cH,(\cQ_\iota)_{\iota\in I}} =
\sum_{\omega\in\cH\cap\ZZ^l} X^{-\bil{(1,\dotsc,1)}\omega} \prod_{\iota\in I}
Y_\iota^{\min(\bil\psi\omega : \psi\in \cQ_\iota)}.
\]
A straight-forward modification of \cite[Thm~4.10]{topzeta} now yields
that for almost all $v\in \Places_k$ and all finite extension $K/k_v$,
we have
\[
\Zeta^\cR_K(\bm s) = 
\sum_{G\subset F} 
\noof{\bar V_G(\fO_K/\fP_K)} \dtimes \frac{(q_K-1)^{\noof G+1}}{q_K^{n+1}} \dtimes \cZ^{
  \cD\times\StrictOrth^{\phantom G}\times\StrictOrth^G, (\cP_{G,\lambda})_{\lambda\in\Lambda}}(q_K^{\phantom{s_1}},(q_K^{-s_\lambda})_{\lambda\in\Lambda}).
\]
Now assume that $\cD\subset\partial\Orth^n$ and that $F$ consists of homogeneous elements.
Let $\cN$ denote the Newton polytope of $\prod F$.
Since $\cR$ is balanced, there exists a unique face
$\tau\subset\cN$ such that $\cD$ is contained in the  normal cone
$\NormalCone_\tau(\cN)$ of $\cN$ with respect to $\tau$,
cf.~\cite[Lem.~5.3]{topzeta2}.
By~\cite[Lem.~6.1(iii)]{topzeta}, $\tau$ coincides with the Newton
polytope of $\prod_{f\in F}\init_{\omega}(f)$.
Using~\cite[Lem.~6.1(i)]{topzeta}, we construct
varieties $U_G\subset \Torus^{\dim(\tau)}_k$ and explicit isomorphisms
$V_G \approx_k U_G \times_k \Torus^{n-\dim(\tau)}_k$.
Hence, the formula for  $(1-q_K^{-1})^{-1} \Zeta^{\cR}_K(\bm s)$ in
the statement of the current theorem holds for
\[
W_G :=
X^{-n} (X-1)^{\noof G + n - \dim(\tau)} \dtimes
\cZ^{\cD\times\StrictOrth^{\phantom G}\times\StrictOrth^G, (\cP_{G,\lambda})_{\lambda\in\Lambda}}(X,(Y_\lambda)_{\lambda\in\Lambda})
\]
and it only remains to prove that $W_G\in \MM[X,(Y_\lambda)_{\lambda\in\Lambda}]$.
Write $\cD = \bigcup_{\delta\in\Delta}\cC_0^\delta$ as in
Definition~\ref{d:datum} for non-empty $\cC_0^\delta$.
By~\cite[Lem.~6.11]{topzeta2} and since
$\dim\bigl(\cC_0^\delta\times\StrictOrth\times\StrictOrth^G\bigr) = \dim\bigl(\cC_0^\delta\bigr) + \noof
G + 1$, it suffices to show that $\dim(\cC_0^\delta) < n-\dim(\tau)$ for $\delta\in\Delta$.
Fix $\delta\in \Delta$ and let $\bm w_1,\dotsc,\bm w_e\in \cC_0^\delta$ be
$\RR$-linearly independent for $e = \dim\bigl(\cC_0^\delta\bigr)$.
As $F$ consists of homogeneous elements, 
the vector $(1,\dotsc,1)$ is contained in the closure of every normal cone of
$\cN$.
In particular, there exists 
$\bm w_{e+1} \in \NormalCone_\tau(\cN) \cap \StrictOrth^n$.
By convexity, $\cC_0^\delta\subset\partial\Orth^n$ is contained
in a single coordinate hyperplane of $\RR^n$.
We conclude that $\{\bm w_1,\dotsc,\bm w_{e+1}\}\subset\NormalCone_\tau(\cN)$ is
linearly independent so that $\dim(\cC_0^\delta) = e<
\dim(\NormalCone_\tau(\cN)) = n-\dim(\tau)$.
\end{proof}

In order to explicitly compute the Euler characteristics $\Euler(U_G(\CC))$ and
rational functions $\red{W_G}$ in Theorem~\ref{thm:topregular}, we proceed similarly to
\cite[\S 3.3]{topzeta} and \cite[\S\S 6.5--6.7]{topzeta2}.

\subsubsection{Simplification and reduction}
\label{sss:simred}

\paragraph{Change of generators.}
Let $\cR=(\cD,F,\alpha,\Lambda)$ with $F = F_1\cup\dotsb\cup
F_m$ as in Definition~\ref{d:datum}.
Let $\cR'=(\cD',F',\alpha',\Lambda')$ with $F'= F_1'\cup\dotsb\cup F_m'$ be
another representation datum in the same number of variables as $\cR$.
Suppose that $\cD = \cD'$, $\Lambda = \Lambda'$,
and that $\cR$ and $\cR'$ are both $(-1)$-expandable.
If $\XX^{\alpha_i^{\phantom\prime}} F_i^{\phantom\prime}$ and $\XX^{\alpha_i'} F_i^\prime$ generate the same ideal of
$k[\cD^*\cap\ZZ^n]$ for $i=1,\dotsc,m$, then $\Zeta^\cR_{\topo}(\bm s) =
\Zeta^{\cR'}_{\topo}(\bm s)$.
This simple observation lies at the heart of the 
simplification and reduction steps explained in the following.
It will be most convenient to describe these operations on the level of the sets $\hat F_i := \XX^{\alpha_i} F_i$.
Specifically, beginning with $\hat F_1,\dotsc, \hat F_m$, we will derive sets
$\hat F_1', \dotsc,\hat F_m'$.
We then obtain $\cR'$ by constructing $F' \subset k[\XX^{\pm 1}]$ of
minimal cardinality, $F_1',\dotsc,F_m'\subset F'$, and $\alpha_i'\colon
F_i'\to\ZZ^n$  with $\hat F_i' = \XX^{\alpha_i'} F_i'$ for $i=1,\dotsc,m$.
While $F'$ is only determined by $(\hat F_1',\dotsc,\hat F_m')$ up to
rescaling by Laurent monomials, this ambiguity not does affect
questions of degeneracy, see~\cite[Rem.~4.3(ii)]{topzeta}. 

\paragraph{Simplification.}
Let $\cR = (\cD,F,\alpha,\Lambda)$ and $\hat F_i = \XX^{\alpha_i}F_i$ be as
above.
We obtain $\hat F_i'$ from $\hat F_i$ by repeated application of the
following steps.
First, if $f$ divides $g$ within $k[\cD^*\cap\ZZ^n]$ for distinct $f,g\in \hat
F_i$, then we discard $g$ from $\hat F_i$.
Next, if $f,f'\in \hat F_i$ are distinct, if $t,t'$ are terms
(of initial forms, if $\cR$ is balanced) of $f$ and $f'$,
respectively, and if $t/t'\in k[\cD^*\cap\ZZ^n]$, then we are free to replace
$f$ by $g := f - f't/t'$ in $\hat F_i$ which we do if
$\card{\supp(g)}<\card{\supp(f)}$. 
After finitely many iterations, this procedure stops at which point we have
constructed sets $\hat F_1',\dotsc,\hat F_m'\subset k[\cD^*\cap\ZZ^n]$.
We then construct $\cR'$ as previously explained.
We note that in contrast to the simplification step for toric data
in \cite[\S 7.2]{topzeta2}, in the present setting, we cannot simply discard
monomials from $F$ by modifying $\cD$.

\paragraph{Reduction.}
As in \cite[\S 7.3]{topzeta2}, the reduction step is a last resort (which might
well fail) which is only ever attempted when a balanced and simplified
representation datum $\cR = (\cD,F,\alpha,\Lambda)$ is singular (i.e.~fails to be regular).
Again, write $\hat F_i = \XX^{\alpha_i} F_i$.
We first construct a $\subset$-minimal set $G\subset F$ witnessing the failure of
regularity in Definition~\ref{d:balreg}.
We assume that $\card{F_i\cap G}\ge 2$ for some $i$; if no such $i$
exists, then we give up right away.
Define $\hat\alpha_i\colon F_i\to \hat F_i, f \mapsto \XX^{\alpha_i(f)} f$ 
and note that  $\hat\alpha_i$ is injective since we already applied the
simplification procedure.
We may therefore find distinct $f,f'\in \hat\alpha_i(F_i\cap G)$.
We next apply an algebraic transformation to each of $f$ and
$f'$ in turn, resulting in two derived representation data $\cR^+$ and
$\cR^-$. Our ultimate goal is to remove the particular cause of singularity of $\cR$
corresponding to $(f, f')$.
Specifically, given $(f,f')$, analogously to \cite[\S 7.3]{topzeta2}, we next
consider pairs $(t,t')$ consisting of terms of $\init_\omega(f)$ and
$\init_\omega(f')$, respectively, where $\omega\in\cD$.
Having chosen $(f,f')$ and $(t,t')$ (cf.~\cite[\S 7.3]{topzeta2}),
let $\gamma\in\ZZ^n$ be the exponent vector of the unique monomial in $t/t'$.
Let $\cD^\pm = \cD \cap \{\pm \gamma\}^*$
and define $\hat F_i^{\pm}$ by replacing $f$ (resp.~$f'$) by $f - \frac{t}{t'} f'$
(resp.~$f' - \frac{t'}{t} f$) within $\hat F_i$; we let $\hat F_j^\pm = \hat
F_j^{\phantom \pm}\!$
for $j\not= i$.
By proceeding as indicated above, we obtain
representation data $\cR^{\pm} = (\cD^\pm,F^{\pm},\alpha^{\pm},\Lambda)$ and
by construction, we have $\Zeta^{\cR}_{\topo}(\ess) =
\Zeta^{\cR^+}_{\topo}(\ess) + \Zeta^{\cR^-}_{\topo}(\ess)$
(assuming $(-1)$-expandability).
We then add $\cR^+$ and $\cR^-$ back to the unprocessed collection of our main
procedure (see~\S\ref{ss:topzeta2}) and resume with the next iteration of its
main loop.
As in \cite[\S 7.3]{topzeta2},
there is no guarantee that the above operations will ever
succeed in removing all singularities.
In order to ensure termination, we again impose a bound on the
total ``reduction depth''.

\section{A full classification in dimension six and further examples}
\label{s:examples}

We now illustrate the strength of our method for computing topological
representation zeta functions of unipotent groups from \S\ref{s:compute}
by presenting a large number of examples computed with its help.
In particular, we give a complete list of the topological
representation zeta functions associated with unipotent groups of dimension at
most six over an algebraic closure $\bar\QQ$ of $\QQ$.
The success of our approach in these cases is in notable contrast to the
computation of topological subgroup zeta functions in 
\cites{topzeta,topzeta2} where substantial gaps remain in dimension six. 

\subsection*{Practical matters}
In order to explicitly compute topological representation zeta functions of
unipotent groups using the method explained in \S\ref{s:compute}, computer
assistance is indispensable for all but the smallest examples.
The author's publicly available package \textsf{Zeta}~\cite{zeta2} for the computer algebra system
Sage~\cite{Sage} provides an implementation of the method from \S\ref{s:compute} for unipotent
$\QQ$-groups; all the examples discussed below were computed using it.
We included references to associated $p$-adic
representation zeta functions whenever we are aware of them.
In all these cases, our machine computations agree with the rational functions
obtained using the informal method for deducing topological zeta functions from
$p$-adic ones from the introduction.
Our automated approach therefore provides some independent confirmation of these
manual computations.
The majority of examples below are new in the sense that we are not aware of any
previous computations of associated $p$-adic representation zeta functions.

\subsection*{Classification by dimension}
Consider the problem of determining the topological
representation zeta functions of unipotent $k$-groups of dimension at most some
fixed number $d$ as $k$ ranges over number fields.
By invariance under base extension (Proposition~\ref{prop:base}) and since
unipotent groups in characteristic zero are in 1-1 correspondence
with finite-dimensional nilpotent Lie algebras, the topological zeta functions
in question are naturally indexed by nilpotent Lie algebras of dimension at most
$d$ over $\bar\QQ$.
By multiplicativity (Corollary~\ref{cor:product}), we may
further restrict attention to $\oplus$-indecomposable algebras.

\subsection*{A complete list in dimension at most six}
A classification of $6$-dimensional nilpotent Lie algebras over a field $F$ of
characteristic zero was first given in \cite{Mor58}.
The number of isomorphism classes of these algebras is finite if and only if
$F^\times/(F^\times)^2$ is.
In particular, the task of determining the topological representation zeta
functions of unipotent $\bar\QQ$-groups of dimension at most six is a finite
problem.
It turns out that all groups in questions are amenable to the method
from~\S\ref{s:compute}.
A complete list of the topological representation zeta functions
corresponding to the $30$~indecomposable nilpotent Lie algebras of dimension at
most six over $\bar\QQ$
is given in Table~\ref{tab:six} (p.~\pageref{tab:six}).
In the column ``$\bm\fg$'', we give the names
of these algebras as in~\cite[\S 4]{dG07}.
For half of the Lie algebras in Table~\ref{tab:six}, associated $p$-adic
representation zeta functions have been previously computed.
In cases where the author is aware of such computations, the column ``$p$-adic
reference'' in Table~\ref{tab:six} points to these $p$-adic results.

The column ``wt''  provides a measure of the ``algebro-geometric complexity''
of the computation of the respective topological zeta function.
In detail, by the \emph{weight} of a representation datum
$(\cD,F,\alpha,\Lambda)$, we mean the number $\sum_{f\in F}
(\card{\supp(f)}-1)\ge 0$.
In particular, the weight of $(\cD,F,\alpha,\Lambda)$ is zero precisely when $F$
consists entirely of Laurent monomials.
The weights given in Table~\ref{tab:six} are those
of the initial representation data constructed using Theorem~\ref{thm:repint}
(w.r.t.~a suitable choice of a basis of $\bm\fg$).

In the case of weight~0, the computation of topological representation
zeta functions via Theorem~\ref{thm:topregular} is purely combinatorial
and immediately reduces to computing the topological incarnation
of a single rational function $\cZ^{\cH,(\cQ_\iota)_{\iota\in I}}$ from the
proof of Theorem~\ref{thm:repint}.
While this case might be devoid of algebraic geometry, it can easily be
computationally expensive due to the large number of case distinctions
and subdivisions of cones involved.

\begin{table}[h]
  \footnotesize
  \centering
  \begin{tabular}{r|c|l|c|r}
   \hline
   $\bm\fg$ & dim & $\zeta_{\GG,\topo}(s)$ & wt & $p$-adic reference
   \\
    \hline
    abelian & 1 & $1$ & 0 & obvious\\
    $L_{3,2}$ & 3 & $\frac s {s-1}$ & 0 & \cite[Thm~5]{NM89} \\
    $L_{4,3}$& 4 & $\frac{s^2}{(s - 1)^2}$ & 0 & $M_3$ \cite[(4.2.24)]{Ezzat}\\
    $L_{5,4}$& 5 & $\frac{2s}{2s - 1}$ & 0 & $B_4$ \cite[Ex.~6.3]{Snocken} \\
    $L_{5,5}$& 5 & $\frac{(2s - 1)s}{2(s - 1)^2}$ & 0 & $G_{5,3}$ \cite[Tab.~5.2]{Ezzat}\\
    $L_{5,6}$& 5 & $\frac{2s^2}{(2s - 3)(s - 1)}$ & 0\\
    $L_{5,7}$& 5 & $\frac{s^2}{(s - 1)(s - 2)}$ & 0 & $M_4$ \cite[(4.2.24)]{Ezzat}\\
    $L_{5,8}$& 5 & $\frac{s}{s - 2}$ & 0 & $M_{3,3}$ \cite[(5.3.7)]{Ezzat}; $G_3$ \cite[Ex.~6.2]{Snocken}\\
    $L_{5,9}$& 5 & $\frac{s^2}{(s - 1)(s - 2)}$ & 0 & $F_{3,2}$ \cite[Tab.~5.2]{Ezzat}\\
    $L_{6,10}$& 6 & $\frac{2s^2}{(2s - 1)(s - 1)}$ & 0 & $G_{6,12}$ \cite[Tab.~5.2]{Ezzat}\\
    $L_{6,11}$& 6 & $\frac{(6s + 1)s}{2(3s - 4)(s - 1)}$ & 0 \\
    $L_{6,12}$& 6 & $\frac{2s^2}{(2s - 3)(s - 1)}$ & 0 \\
    $L_{6,13}$& 6 & $\frac{(12s^2 - 18s + 7)s^2}{6(2s - 1)(s - 1)^3}$ & 1 \\
    $L_{6,14}$& 6 & $\frac{(12s^2 - 12s + 1)s^2}{3(2s - 1)(2s - 3)(s - 1)^2}$ & 1 \\
    $L_{6,15}$& 6 & $\frac{(6s - 7)s^2}{(3s - 5)(2s - 3)(s - 1)}$ & 1 \\
    $L_{6,16}$& 6 & $\frac{2s^3}{(2s - 1)(s - 1)(s - 2)}$ & 0 \\
    $L_{6,17}$& 6 & $\frac{(2s - 3)s^2}{2(s - 1)(s - 2)^2}$ & 0 \\
    $L_{6,18}$& 6 & $\frac{s^2}{(s - 1)(s - 3)}$ & 0 & $M_5$ \cite[(4.2.24)]{Ezzat}\\
    $L_{6,19}(0)$ & 6 & $\frac{s^2}{(s - 1)(s - 2)}$ & 0 & $G_{6,7}$ \cite[Tab.~5.2]{Ezzat}\\
    $L_{6,19}(1)$ & 6 & $\frac{2s^2}{(2s - 1)(s - 2)}$ & 0 & $G_{6,14}$ \cite[Tab.~5.2]{Ezzat}\\
    $L_{6,20}$ & 6 & $\frac{(2s - 1)s}{2(s - 1)(s - 2)}$ & 0 \\
    $L_{6,21}(0)$ & 6 & $\frac{s^2}{(s - 2)^2}$ & 0 \\
    $L_{6,21}(1)$ & 6 & $\frac{2s^2}{(2s - 3)(s - 2)}$ & 0 \\
    $L_{6,22}(0)$ & 6 & $\frac{2s}{2s - 3}$ & 0 & \cite[Ex.~6.5]{Snocken}\\
    $L_{6,22}(1)$ & 6 & $\frac{s^2}{(s - 1)^2}$ & 1\\
    $L_{6,23}$& 6 & $\frac{(2s - 3)s}{2(s - 2)^2}$ & 0 \\
    $L_{6,24}(0)$& 6 &$\frac{(4s^2 - 6s + 1)s}{(2s - 3)^2(s - 1)}$ & 0 \\
    $L_{6,24}(1)$& 6 &$\frac{(2s + 1)s}{(2s - 3)(s - 1)}$ & 1 \\
    $L_{6,25}$& 6 & $\frac{(s - 1)s}{(s - 2)^2}$ & 0 & $M_{4,3}$ \cite[(5.3.7)]{Ezzat}\\
    $L_{6,26}$& 6 &$\frac s{s - 3}$ & 0 & $F_{1,1}$ \cite[Thm~B]{SV14}\\
    \hline
  \end{tabular}
  \caption{Topological representation zeta functions 
    associated with $\oplus$-indecomposable nilpotent Lie algebras of
    dimension at most six (complete list)}
  \label{tab:six}
\end{table}

\subsection*{Further examples}
To the author's knowledge,
seven is the largest dimension for which a complete classification
of nilpotent Lie $\CC$-algebras is known; see
\cite{Mag10} for a recent comparison, including some corrections, of such classifications.
Beginning in dimension seven, infinite families of pairwise non-isomorphic nilpotent Lie
algebras appear over $\CC$ and over $\bar \QQ$ whence the above reduction to a
finite computation no longer applies. 
In addition, there are $7$-dimensional nilpotent Lie $\bar\QQ$-algebras which
are not amenable to our method.
Despite these limitations, we can compute a large number of
interesting examples of topological representation zeta functions of unipotent
$\bar\QQ$-groups of dimension seven and beyond,
and Table~\ref{tab:beyond} (p.~\pageref{tab:beyond}) includes a selection of these.
To the author's knowledge, no $p$-adic representation zeta functions are known
for any of the examples in Table~\ref{tab:beyond}.
The first batch of Lie algebras in Table~\ref{tab:beyond} is taken from
\cite{See93}.
The algebras $N_i^{8,d}$ are taken from the lists of $8$-dimensional
Lie $\CC$-algebras of class $2$ with $d$-dimensional
centre in \cites{RZ11,YD13}.
The remaining algebras in Table~\ref{tab:beyond} are obtained from algebras in
Table~\ref{tab:six} by base 
change to dual numbers. In detail, let $k[\varepsilon] = k[X]/X^2$
and for a $k$-algebra $\bm\fg$, let $\bm\fg[\varepsilon] = \bm\fg\otimes_k
k[\varepsilon]$ regarded as a $k$-algebra.
For example, $L_{3,1}$ is the Heisenberg Lie algebra and $L_{3,1}[\varepsilon]
\approx L_{6,22}(0)$.
Recall from Propositions~\ref{prop:base} and~\ref{prop:res} that base extension
of number fields followed by restriction of scalars is very well-behaved
on the level of topological representation zeta functions.
The examples in Table~\ref{tab:beyond} show that this is not generally true for
the operation $\bm\fg\mapsto \bm\fg[\varepsilon]$  (but see
Question~\ref{qu:3over2} below).
For yet more examples, we refer the reader to the database of topological
representation zeta functions included with \cite{zeta2}.

\begin{table}[h]
  \footnotesize
  \centering
  \begin{tabular}{r|c|l|c}
   \hline
   $\bm\fg$ & dim &  $\zeta_{\GG,\topo}(s)$ & wt
   \\
   \hline
   $3,5,7_C$   \cite[p.~483]{See93} & 7 & $\frac{(2s - 3)(s - 1)s}{2(s - 2)^3}$ & 1 \\
   $2,7_B$ \cite[p.~484]{See93} & 7 & $\frac s {s-1}$ & 0 \\
   $2,5,7_D$ \cite[p.~484]{See93} & 7 & $\frac{4(s - 1)s}{(2s - 3)^2}$ & 0 \\
   $2,5,7_G$ \cite[p.~484]{See93} & 7 & $\frac{(12s^2 - 22s + 9)s}{2(3s - 4)(2s - 3)(s - 1)}$ & 1 \\
   $2,4,7_J$ \cite[p.~485]{See93} & 7 & $\frac{(6s + 1)s}{2(3s - 4)(s - 2)}$ & 1 \\
   $2,4,7_R$ \cite[p.~486]{See93} & 7 & $\frac{(6s^2 - 14s + 7)s}{2(3s - 4)(s - 2)^2}$ & 2 \\
   $2,4,5,7_K$ \cite[p.~487]{See93} & 7 & $\frac{2(24s^3 - 82s^2 + 84s - 23)s}{(4s - 7)(3s - 5)(2s - 3)^2}$ & 1 \\
   $2,3,5,7_A$ \cite[p.~488]{See93} & 7 & $\frac{2s^2}{(2s - 3)(s - 2)}$ & 3 \\
   $2,3,5,7_D$ \cite[p.~488]{See93} & 7 & $\frac{(36s^2 - 24s - 29)s}{2(3s - 4)^2(2s - 3)}$ & 2 \\
   $2,3,4,5,7_D$ \cite[p.~489]{See93} & 7 & $\frac{(6s^3 - 22s^2 + 25s - 8)s^2}{2(3s - 5)(s - 1)^2(s - 2)^2}$ & 2 \\
   $2,3,4,5,7_E$ \cite[p.~489]{See93} & 7 & $\frac{2s^2}{(2s - 5)(s - 2)}$ & 3 \\
   $1,5,7$ \cite[p.~489]{See93} & 7 & $\frac{(3s - 1)s}{(3s - 2)(s - 1)}$ & 0 \\
   $1,4,7_C$ \cite[p.~489]{See93} & 7 & $\frac{3(s - 1)s}{(3s - 4)(s - 3)}$ & 0 \\
   $1,4,5,7_B$ \cite[p.~490]{See93} & 7 & $\frac{(3s - 2)s^2}{3(s - 1)^3}$ & 0 \\
   $1,3,7_B$  \cite[p.~490]{See93} & 7 & $\frac{(9s^3 - 9s^2 + 3s - 1)s}{(3s - 2)^2(s - 1)^2}$ & 0 \\
   $1,3,5,7_M:0$ \cite[p.~491]{See93} & 7 & $\frac{(12s^2 - 42s + 37)(s - 1)s}{6(2s - 3)(s - 2)^3}$ & 1 \\
   $1,3,5,7_S:1$ \cite[p.~491]{See93} & 7 & $\frac{(36s^2 - 24s - 29)s}{2(3s - 4)^2(2s - 3)}$ & 2 \\
   $1,3,4,5,7_I$   \cite[p.~492]{See93}&7 & $\frac{(3s - 4)s^2}{(3s - 5)(s - 2)^2}$ & 1 \\
   $1,2,4,5,7_E$  \cite[p.~493]{See93}& 7 & $\frac{(288 s^3 - 624 s^2 + 266 s + 81)s}{(6 s - 7) (4 s - 5)^2 (3s - 4)}$ & 2 \\
   $1,2,4,5,7_J$ \cite[p.~493]{See93} & 7 & $\frac{2(6s - 7)s^2}{3(2s - 3)^2(s - 2)}$ & 2 \\
   $1,2,3,5,7_B$ \cite[p.~494]{See93} & 7 & $\frac{(8s^2 - 12s + 5)s^2}{8(s - 1)^4}$ & 3 \\
   $1,2,3,5,7_C$ \cite[p.~494]{See93} & 7 & 
   $\frac{(3024 s^5 - 10248 s^4 + 13286 s^3 - 7893 s^2 + 1900 s - 64)s}{56(9s-8)(6s-5)(s-1)^4}$
   & 3 \\
   $1,2,3,4,5,7_F$ \cite[p.~494]{See93} & 7 & $\frac{(3s - 4)(2s - 3)s^2}{2(3s- 5)(s - 1)(s - 2)^2}$ & 3 \\
   $1,2,3,4,5,7_G$ \cite[p.~494]{See93} & 7 & $\frac{(4s - 5)s^2}{(4s - 7)(s-1)(s - 2)}$ & 2 \\
   $N_1^{8,2}$ \cite[Thm~1]{RZ11} & 8 & $\frac{6s^2}{(3s - 2)(2s - 1)}$ & 1 \\
   $N_5^{8,2}$ \cite[Thm~1]{RZ11} & 8 & $\frac{6s}{6s - 5}$ & 0 \\
   $N_3^{8,3}$ \cite[Thm~3.2]{YD13} & 8 & $\frac{(4s^2 - 6s + 1)s}{(2s - 3)^2(s -1)}$ & 0 \\
   $N_7^{8,3}$ \cite[Thm~3.2]{YD13} & 8 & $\frac{(6s - 7)s}{(3s - 5)(2s - 3)}$ & 1 \\
   $N_1^{8,4}$ \cite[Thm~3.3]{YD13} & 8 & $\frac{(s - 1)s}{(s - 2)(s - 3)}$ & 1\\
   $N_3^{8,4}$ \cite[Thm~3.3]{YD13} & 8 & $\frac{2(s - 2)s}{(2s - 5)(s - 3)}$ & 1 \\
   $L_{4,3}[\varepsilon]$ & 8 & $\frac{2(4s^2 - 6s + 1)s}{(2s - 3)^3}$ & 1 \\
   $L_{5,4}[\varepsilon]$ & 10 & $\frac{4s}{4s - 3}$ & 0 \\
   $L_{5,5}[\varepsilon]$ & 10 & $\frac{16(s - 1)^2s}{(4s - 5)(2s - 3)^2}$ & 2 \\
   $L_{5,7}[\varepsilon]$, $L_{5,9}[\varepsilon]$ & 10 & $\frac{(2s^2 - 4s + 1)s}{(2s - 5)(s - 2)^2}$ & 3 \\
   $L_{5,8}[\varepsilon]$ & 10 & $\frac{(2s - 3)s}{(2s - 5)(s - 2)}$ & 1 \\
   $L_{6,22}(0)[\varepsilon] \approx L_{3,2}[\varepsilon][\varepsilon']$ & 12 & 
   $\frac{2(2s - 3)s}{(4s - 7)(s - 2)}$  & 2 \\
   $L_{6,23}[\varepsilon]$ & 12 & 
   $\frac{2(8s^3 - 44s^2 + 82s - 53)s}{(4s - 9)(2s - 5)^2(s - 2)}$
   & 5 \\ 
   $L_{6,24}(0)[\varepsilon]$ & 12 & $\frac{2(512 s^6 - 4544 s^5 + 16544 s^4 - 31500 s^3 + 32885 s^2 - 17685 s +  3769)s}{(8 s - 15)(4 s - 7)^3 (2 s - 3)(s - 2)^2}$
   & 5 \\
   $L_{6,25}[\varepsilon]$ & 12 &  $\frac{2(4s^3 - 20s^2 + 33s - 19)s}{(2s - 5)^3(s - 2)}$ & 3 \\
   $L_{6,26}[\varepsilon]$ & 12 & $\frac{2(s - 2)s}{(2s - 7)(s - 3)}$ & 3 \\
   \hline
 \end{tabular}
 \caption{Examples of topological representation zeta functions in dimension $\ge 7$}
 \label{tab:beyond}
\end{table}

\section{Open questions}
\label{s:questions}

Based on our experimental evidence, we state some open questions
which might provide interesting avenues for future research.
Throughout, let $\GG$ be a non-abelian unipotent $k$-group.
All examples of topological representation zeta functions
that we computed are consistent with
Questions~\ref{qu:refined_degree}--\ref{qu:fp_bounds} below having positive
answers;
using the explicit $p$-adic formulae from \cite[Thm~B]{SV14}, this includes
examples in much higher dimensions than those covered by Tables~\ref{tab:six}--\ref{tab:beyond}.

By Corollary~\ref{cor:degree}, $\zeta_{\GG,\topo}(s)$ has
degree zero in $s$.
In contrast, no explanation of the observed degrees seems to be known for 
topological subalgebra zeta functions, see \cite[\S 8]{topzeta};
note that in the enumeration of subalgebras, passing from $p$-adic to
topological zeta functions involves an additional
transformation~\cite[Ex.~5.11(iii), Def.~5.17]{topzeta}.
Perhaps the following question is a more appropriate analogue of
\cite[Conj.~I]{topzeta}. 

\begin{question}
  \label{qu:refined_degree}
  Does $\zeta_{\GG,\topo}(s)-1$ always have degree $-1$ in $s$?
\end{question}

We now consider a refinement of
Question~\ref{qu:refined_degree} in the spirit of
\cite[\S 9.3]{topzeta2}.
Define $$\omega(\GG) := 
s(\zeta_{\GG,\topo}(s)-1)\Big\vert_{s=\infty} =
s^{-1}\bigl(\zeta_{\GG,\topo}(s^{-1})-1\bigr)\Big\vert_{s=0} \in
\QQ.$$
Question~\ref{qu:refined_degree} has a positive answer if and only if
always $\omega(\GG)\not= 0$.
For example, the groups with Lie algebras $1,\!4,\!5,\!7_B$, $1,\!3,\!7_B$, and
$1,\!2,\!3,\!5,\!7_C$ in Table~\ref{tab:beyond} have $\omega$-invariant $\frac 7
3$.

\begin{question}
  Is $\omega(\GG)$ always positive?
\end{question}

It would be interesting to find a group-theoretic interpretation of $\omega(\GG)$.
Using Corollary~\ref{cor:product} and Proposition~\ref{prop:infinity},
it is easy to see that if $\HH$ is another unipotent $k$-group,
then $\omega(\GG\times_k \HH) = \omega(\GG) + \omega(\HH)$.
Let $\GG[\varepsilon]$ denote the $k$-group attached to $\bm\fg[\varepsilon] =
\bm\fg \otimes_k k[\varepsilon]$ from \S\ref{s:examples},
where $\bm\fg$ is the Lie algebra of $\GG$
and $k[\varepsilon]=k[X]/X^2$.

\begin{question}
  \label{qu:3over2}
  Do we always have $\omega(\GG[\varepsilon]) = \frac 3 2 \omega(\GG)$?
\end{question}

For example, the groups with Lie algebras $L_{6,24}(0)$ and
$L_{6,24}(0)[\varepsilon]$ in Tables~\ref{tab:six}--\ref{tab:beyond} have
$\omega$-invariants $\frac 5 2$ and $\frac{15} 4$, respectively. 
Looking at the various examples in Tables~\ref{tab:six}--\ref{tab:beyond}, one
cannot help but notice factors $s^e$ ($e\ge 1$) in the numerators of all functions given there.
\begin{question}
  \label{qu:vanish}
  Does $\zeta_{\GG,\topo}(s)$ always vanish at zero?
\end{question}

Finally, fixed points of topological representation zeta functions seem to exhibit
properties similar to zeros of topological subalgebra zeta functions,
cf.~\cite[Conj.~III]{topzeta}.
\begin{question}
  \label{qu:fp_bounds}
  Let $s_0 \in \CC$ with $\zeta_{\GG,\topo}(s_0)=s_0$.
  Do we have $0\le \Real(s_0) \le \dim(\GG)-1$?
\end{question}

{
  \bibliographystyle{abbrv}
  \tiny
  \bibliography{unipotent}
}

\end{document}